\renewcommand{\d}{\delta}
\newcommand{\g}{\gamma}
\newcommand{\s}{\sigma}
\newcommand{\bd}{\partial} 
\newcommand{\cO}{\mathcal{O}}
\newcommand{\QQ}{\mathbb{Q}}
\newcommand{\CC}{\mathbb{C}}
\newcommand{\PP}{\mathbb{P}}
\newcommand{\ZZ}{\mathbb{Z}}
\newcommand{\x}{\times}
\newcommand{\la}{\langle}
\newcommand{\ra}{\rangle}
\newcommand{\CP}{\CC P}
\newcommand{\orb}{\mathrm{orb}}
\newcommand{\Jac}{\mathrm{Jac}}
\theoremstyle{plain}
\newtheorem{proposition}{Proposition}
\newtheorem{theorem}[proposition]{Theorem}
\newtheorem{lemma}[proposition]{Lemma}
\newtheorem{corollary}[proposition]{Corollary}
\theoremstyle{definition}
\newtheorem{definition}[proposition]{Definition}
\theoremstyle{remark}
\newtheorem{remark}[proposition]{Remark}
\newtheorem{question}[proposition]{Question}
\newtheorem{conjecture}[proposition]{Conjecture}
\title[Quasi-regular Sasakian structures]{Quasi-regular Sasakian and K-contact structures on Smale-Barden manifolds}
\author[A. Ca\~nas]{Alejandro Ca\~nas}
\address{Departamento de \'Algebra, Geometr\'ia y Topolog\'ia, Facultad de Ciencias, Universidad de M\'alaga, Campus de Teatinos s/n, 29071 Málaga, Spain}\email{alejandro.cm.95@uma.es}
\author[V. Mu\~{n}oz]{Vicente Mu\~{n}oz}
\address{Departamento de \'Algebra, Geometr\'ia y Topolog\'ia, Facultad de Ciencias, Universidad de M\'alaga, Campus de Teatinos s/n, 29071 Málaga, Spain}\email{vicente.munoz@uma.es}
\author[M. Sch\"utt]{Matthias Sch\"utt}
\address{Institut f\"ur Algebraische Geometrie, Gottfried Wilhelm Leibniz Universit\"at Hannover, Welfengarten 1, 30167 Hannover, Germany}
\email{schuett@math.uni-hannover.de}
\author[A. Tralle]{Aleksy Tralle}
\address{Faculty of Mathematics and Computer Science, University of Warmia and Mazury, S\l\/oneczna 54, 10-710 Olsztyn, Poland}
\email{tralle@matman.uwm.edu.pl}
\subjclass[2010]{53C25, 53D35, 14J28, 14J17}
\keywords{Sasakian, K-contact, Smale-Barden manifold, K3 surface, cyclic orbifold}
\begin{document}
\maketitle

\begin{abstract} 
Smale-Barden manifolds are simply-connected closed $5$-manifolds. It is an important and difficult question 
to decide when a Smale-Barden manifold admits a Sasakian or a K-contact structure.
The known constructions of Sasakian and K-contact structures are obtained mainly by two techniques. These are  either links (Boyer and Galicki), 
or semi-regular  Seifert fibrations over smooth orbifolds (Koll\'ar). Recently, 
the second named author of this article started the systematic development of quasi-regular Seifert fibrations, that is,  
over orbifolds which are not necessarily smooth. The present work is devoted to several applications of this theory. 
First, we develop constructions of a Smale-Barden manifold admitting a quasi-regular Sasakian structure but not a semi-regular 
K-contact structure. Second, we determine all Smale-Barden manifolds that admit a null Sasakian structure. Finally, we
show a counterexample in the realm of cyclic K\"ahler orbifolds to the algebro-geometric conjecture in \cite{MRT} that
claims that for an algebraic surface with $b_1=0$ and $b_2>1$ there cannot be $b_2$ smooth disjoint complex curves
of genus $g>0$ spanning the (rational) homology.
\end{abstract}  

\section{Introduction}

Sasakian and K-contact geometry are topics of great interest for researchers in the fields of 
differential geometry, algebraic geometry and topology. The main object is defined as follows.
Consider a contact co-oriented manifold $(M,\eta)$ with a contact form $\eta$. We say that $(M,\eta)$ admits a {\it Sasakian structure} $(M,g,\xi,\eta,J)$ if:
\begin{itemize}[leftmargin=.3in]
\item there exists an endomorphism $J:TM\rightarrow TM$ such that 
 $J^2=-\operatorname{Id}+\xi\otimes\eta$,
for the Reeb vector field $\xi$ of $\eta$,
\item $J$ satisfies the conditions
$d\eta(JX,JY)=d\eta(X,Y)$,
for all vector fields $X,Y$ and $d\eta(JX,X)>0$ for all non-zero $X\in\ker\eta$,
\item the Reeb vector field $\xi$ is Killing with respect to the Riemannian metric 
$g(X,Y)=d\eta(JX,Y)+\eta(X)\eta(Y),$
\item the almost complex structure $I$ on the contact cone 
$C(M)=(M\times\mathbb{R}_{+},t^2g+dt^2)$
 defined by 
$I(X)=J(X),X\in\ker \eta, I(\xi)=t{\partial\over\partial t},I\left(t{\partial\over\partial t}\right)=-\xi$,
 is integrable.
\end{itemize}

If one drops the condition of the integrability of  $I$, one obtains a {\it K-contact} structure.

The seminal book \cite{BG} shows several important directions of research and still unsolved problems related to manifolds endowed with such structures. One can mention the  problems of existence of K-contact/Sasakian structures and the research program of studying topological properties of manifolds endowed with these. Recent papers \cite{BFMT, CMRV,  CDY, CDMY1, CDMY2, HT, MRT, MT} contribute to the topological program. Existence questions were analyzed in  the foundational papers of Koll\'ar \cite{K, K1} which showed that studying Sasakian manifolds essentially amounts to constructing Seifert bundles over K\"ahler or symplectic orbifolds. This technique proved to be very efficient and resulted, for example, in substantial progress in understanding the Smale-Barden manifolds with Sasakian structures \cite{CMRV, M, MRT, MT}. However, several important problems are still not solved, essentially because the known results \cite{K,K1, MRT, MT} are obtained for a smaller class of semi-regular Sasakian or K-contact structures, that is, determined by Seifert bundles over smooth orbifolds. These bundles are called semi-regular. If one allows for more general singularities one comes to the notion of a quasi-regular Seifert bundle. The first development of this more general theory was begun by the second author of this article in \cite{M}. In the present paper we further develop the construction techniques for quasi-regular Sasakian or K-contact structures and compare them with the semi-regular case. 
 
 A $5$-dimensional simply connected manifold $M$ is called a {\it Smale-Barden manifold}. These manifolds are classified by their second homology group over $\ZZ$ and the so-called {\it Barden invariant} \cite{B, S}. 
 In more detail,
let $M$ be a compact smooth oriented simply connected $5$-manifold. 
Let us write $H_2(M,\ZZ)$ as a direct sum of cyclic group of prime  power order
  \begin{equation}\label{eqn:H2-1}
  H_2(M,\ZZ)=\ZZ^k\oplus \big( \mathop{\oplus}\limits_{p,i}\, \ZZ_{p^i}^{c(p^i)}\big),
  \end{equation}
where $k=b_2(M)$. Choose this decomposition in a way that the second Stiefel-Whitney class map
  $w_2: H_2(M,\ZZ)\rightarrow\ZZ_2$
is zero on all but one summand $\ZZ_{2^j}$. The value of $j$ is unique, it
is denoted by $i(M)$ and is called the Barden invariant. The fundamental question arises, which Smale-Barden manifolds admit K-contact or Sasakian structures. In \cite{MT} the second and the fourth author of this article obtained classification results for Smale-Barden manifolds with semi-regular Sasakian structures. Thus, one of the aims of this work is to analyze the topic ``quasi-regular Sasakian manifolds versus semi-regular". In particular, we ask the following question.

\begin{question} \label{Q1}
Are there Smale-Barden manifolds which carry quasi-regular Sasakian structures but do not carry semi-regular Sasakian structures?
\end{question}

We answer this question in positive in Theorem \ref{thm:quasi-vs-semi}. Moreover, the examples that we provide also satisfy that they
do not admit
semi-regular K-contact structures. 
In general, we have the following chain of inclusions:
$$
 \begin{array}{ccc}
   \{\text{Sasakian semi-regular} \} & \subset  &  \{\text{Sasakian quasi-regular} \}  \\
    \cap & & \cap \\
   \{\text{K-contact semi-regular} \} & \subset  &  \{\text{K-contact quasi-regular} \}  
   \end{array}
   $$   
 In \cite{CMRV} we have constructed the first Smale-Barden manifold which is K-contact semi-regular 
 but not Sasakian semi-regular.  By Corollary \ref{cor:extra}, we have a Sasakian quasi-regular
 which is not Sasakian semi-regular. The rightmost inclusion is open (see Question \ref{Q2} below).
 In Theorem \ref{thm:quasi-vs-semi} of this article we show  an example of a manifold with a Sasakian quasi-regular structure  which does not admit  K-contact semi-regular structures,
 so all other inclusions are strict.
   
The following problem was posed by Boyer and Galicki \cite{BG}: 
\begin{question} \label{Q2}
Are there Smale-Barden manifolds with K-contact but no Sasakian structures?
\end{question} 
The answer to this question is still not known, although substantial progress has been achieved in \cite{MRT} and the answer in the class of semi-regular Seifert bundles was found in \cite{CMRV}, where it is constructed
a Smale-Barden manifold with a semi-regular K-contact structure that does not admit a semi-regular Sasakian structure.
The key to the construction of such a semi-regular K-contact manifold $M$
is to find a simply-connected smooth symplectic $4$-manifold $X$ with a collection of embedded disjoint
symplectic surfaces $D_i$ of positive genus $g_i>0$ which span the homology $H_2(X,\QQ)$. The manifold $M$ is the Seifert bundle over $X$. 
Such $M$ does not admit a semi-regular Sasakian structure because such configuration of complex curves is not admissible in a complex surface.
Actually, in \cite{MRT} we find the following conjecture.

\begin{conjecture} \label{conj}
There does not exist a smooth complex surface $X$ with $b_1=0$ and $b_2>1$ such that there are smooth disjoint complex
curves $D_i$ of positive genus $g_i>0$ which span the homology $H_2(X,\QQ)$.
\end{conjecture}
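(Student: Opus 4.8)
The plan is to convert the homological hypothesis into numerical constraints on $X$ and then eliminate the surface by its Kodaira dimension. First I would record what the two topological assumptions buy. Since $b_1(X)=0$ is even, $X$ is K\"ahler (Kodaira's criterion) and Hodge theory applies, with irregularity $q(X)=0$. The curves $D_i$ are algebraic, so their classes lie in $\mathrm{NS}(X)\otimes\QQ$; as they span $H_2(X,\QQ)$ this forces $\mathrm{NS}(X)\otimes\QQ=H^2(X,\QQ)$, and hence $p_g(X)=0$ (there is no transcendental $(2,0)$-part). Thus $\chi(\cO_X)=1$, Noether's formula gives $K_X^2=10-b_2$, and the Hodge index theorem makes the intersection form have signature $(1,b_2-1)$. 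Because the $D_i$ are pairwise disjoint, their intersection matrix is $\diag(D_1^2,\dots,D_{b_2}^2)$; since this is $\QQ$-equivalent to the nondegenerate intersection form, every $D_i^2\neq 0$, the $D_i$ form an orthogonal basis, and exactly one of them, say $D_1$, satisfies $D_1^2>0$ while $D_i^2<0$ for $i\ge 2$.

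Next I would exploit adjunction. For a smooth curve $D_i$ of genus $g_i$ one has $2g_i-2=D_i^2+K_X\cdot D_i$, and $g_i\ge 1$ gives $K_X\cdot D_i=(2g_i-2)-D_i^2\ge -D_i^2$. In particular $K_X\cdot D_i\ge |D_i^2|\ge 1$ for every $i\ge 2$, so $K_X$ meets all $b_2-1$ negative curves strictly positively. Expanding $K_X=\sum_i a_i D_i$ in the orthogonal basis yields $a_i=(2g_i-2-D_i^2)/D_i^2$, whence $a_i\le -1$ for $i\ge 2$ and $a_1\ge -1$, together with the identity $K_X^2=\sum_i (2g_i-2-D_i^2)^2/D_i^2=10-b_2$. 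These relations, played against the classification of surfaces with $p_g=q=0$, form the engine of the argument.

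The case analysis then runs by Kodaira dimension. When $\kappa(X)=0$ the minimal model is an Enriques surface, so $K_X$ is numerically trivial and adjunction degenerates to $D_i^2=2g_i-2\ge 0$ for all $i$, contradicting $D_i^2<0$ for $i\ge 2$ (a blow-down handles the non-minimal subcase). When $\kappa(X)=-\infty$, $X$ is rational, and I would use Mori theory and the structure of the effective cone to show that $b_2-1$ pairwise disjoint curves of positive genus and negative self-intersection are incompatible with the forced value $K_X^2=10-b_2$. When $\kappa(X)=1$ the elliptic fibration $\pi\colon X\to\PP^1$ has, on the minimal model, $K_X$ numerically a positive multiple of the fiber $F$ with $F^2=0$, forcing $b_2=10$; then $K_X\cdot D_i>0$ makes each $D_i$ ($i\ge 2$) a multisection, and I would derive a contradiction from the disjointness of these multisections together with $D_i^2<0$ and the requirement that $F$ lie in their $\QQ$-span with $F^2=0$.

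The hard part is $\kappa(X)=2$. Surfaces of general type with $p_g=q=0$ admit no classification, $X$ need not be minimal (it may carry $(-1)$-curves that are not among the $D_i$), and the single numerical identity $K_X^2=10-b_2$ does not by itself contradict the adjunction inequalities. I expect the main obstacle to be exactly this case: a successful argument would likely need finer positivity input --- sharp bounds on the genera of irreducible curves of prescribed self-intersection on general-type surfaces, or a Bogomolov-type instability statement applied to the orthogonal splitting of $H^2$ supplied by the $D_i$ --- rather than the purely numerical bookkeeping that suffices for $\kappa\le 1$. It is this gap that keeps the statement a conjecture, and the orbifold counterexample of the present paper is strong evidence that the obstruction must be genuinely geometric rather than merely homological.
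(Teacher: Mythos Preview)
The statement is a \emph{Conjecture} in the paper, and the paper does not attempt to prove it. On the contrary, the paper cites partial results from \cite{CMRV,MRT} (e.g.\ the genus-$1$ case settled in \cite[Theorem 29]{CMRV}) and then shows in Theorem~\ref{thm:g=1} that the analogous statement \emph{fails} for K\"ahler cyclic orbifolds. So there is no ``paper's own proof'' to compare your proposal against; what you have written is a strategy sketch toward an open problem, and you rightly identify the $\kappa=2$ case as the essential obstruction.

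That said, your sketch has genuine gaps even in the ``easy'' Kodaira dimensions, not just $\kappa=2$. In the $\kappa=0$ and $\kappa=1$ cases you pass to the minimal model to obtain $K_{X_{\min}}$ numerically trivial (resp.\ $K_{X_{\min}}^2=0$), but $X$ itself need not be minimal, and the curves $D_i$ live on $X$, not on $X_{\min}$. Under the blow-down $X\to X_{\min}$ the $D_i$ may pass through the centres, acquire singularities, or lose disjointness, so neither the adjunction relation $D_i^2=2g_i-2$ nor the conclusion $b_2=10$ transfers directly; the phrase ``a blow-down handles the non-minimal subcase'' hides exactly the same difficulty you flag for $\kappa=2$. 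Likewise, for $\kappa=-\infty$ the appeal to Mori theory and the effective cone is too vague to constitute an argument. The partial results in the literature that do go through (such as the genus-$1$ case) rely on more delicate inputs than the coarse numerology $K_X^2=10-b_2$ and the adjunction inequalities you assemble here.
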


Some cases where the conjecture holds appear in \cite{CMRV,MRT}, which are enough for giving the examples of manifolds not admitting
semi-regular Sasakian structures. To find a full answer to Question \ref{Q2} (that is, in the quasi-regular case) one needs to develop the techniques of constructing symplectic and K\"ahler non-smooth orbifolds with second homology spanned by symplectic surfaces or complex curves of positive genus. 
Examples of such orbifolds and the corresponding Seifert bundles are constructed in Section \ref{sec:examples}.   In particular,
we show that Conjecture \ref{conj} does not hold if we assume $X$ to be a complex cyclic orbifold (Theorem \ref{thm:g=1}).

Our last objective is to settle the problem of existence of null Sasakian structures on Smale-Barden manifolds.     
Recall that the Reeb vector field $\xi$ on a co-oriented contact manifold $(M,\eta)$ determines a $1$-dimensional foliation 
$\mathcal{F}_{\xi}$ called the {\it characteristic foliation}. If we are given a Sasakian manifold $(M,g,\xi,\eta,J)$, then 
one can define  the {\it basic Chern classes} $c_k(\mathcal{F}_{\xi})$ of $\mathcal{F}_{\xi}$ which are elements of the 
basic cohomology $H^{2k}_B(\mathcal{F}_{\xi})$ (see \cite[Theorem/Definition 7.5.17]{BG}).     
We say that a Sasakian structure is positive (negative) if $c_1(\mathcal{F}_{\xi})$ can be represented by a positive 
(negative) definite $1$-form. A Sasakian structure is called null, if $c_1(\mathcal{F}_{\xi})=0$. If none of these
cases persists, it is called indefinite. 

In \cite{BG} it is shown that if a Smale-Barden manifold $M$ admits a null Sasakian structure, then 
$M$ is homeomorphic to the connected sum of at most $21$ copies of $S^2\times S^3$. Moreover, 
the authors prove that any $M=\#_k (S^2\times S^3)$ with $2\leq k\leq 21$ admits a null Sasakian structure, 
except, possibly, $b_2(M)=2$ and $b_2(M)=17$.  The following question is left open in \cite{BG}.

\begin{question}[{\cite[Open Problem 10.3.2]{BG}}] \label{Q3}
Find examples of null Sasakian structures on $\#_2(S^2\times S^3)$ and $\#_{17}(S^2\times S^3)$ or show that none can exist.
\end{question}
Note that this question is motivated by applications to $\eta$-Einstein Sasakian geometry \cite{BGM}. Since the transverse geometry of the characteristic foliation $\mathcal{F}_{\xi}$ is K\"ahler, one can understand Sasakian geometry in analogy with K\"ahler geometry. In particular, a null  Sasakian structure can be understood as a transversal Calabi-Yau structure. Therefore, manifolds with null Sasakian structures are odd-dimensional analogues of the Calabi-Yau spaces. Also, every null Sasakian structure  can be deformed to an $\eta$-Einstein structure (see \cite{BGM} for a discussion of possible applications in physics).  

The case $\#_{17}(S^2\times S^3)$ was settled in \cite{CV} using a generalization of the initial approach of Boyer and Galicki via Seifert bundles over weighted surface complete intersections in weighted projective spaces  \cite{IF, R}. Our approach is different and  enables us to provide a complete solution of the problem. This solution illustrates our general  techniques. We answer Question \ref{Q3} in the positive:

\begin{theorem}\label{thm:main-null} 
Any $M=\#_k(S^2\times S^3)$, $2\leq k\leq 21$  admits a null Sasakian structure.
\end{theorem}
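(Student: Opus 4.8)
The plan is to realize every $M=\#_k(S^2\times S^3)$, $2\le k\le 21$, as the total space of a quasi-regular Seifert bundle $\pi\colon M\to X$ over a compact K\"ahler orbifold surface $X$ whose transverse geometry is Calabi--Yau, so that the induced Sasakian structure is null. Indeed, under the identification of the basic cohomology of $\cF_\xi$ with the orbifold cohomology of the leaf space, one has $c_1(\cF_\xi)=c_1^{\orb}(X)=-K_X^{\orb}$, so a null structure is exactly a Seifert bundle over an orbifold with $c_1^{\orb}(X)=0$. In complex dimension $2$ these are the \emph{K3 orbifolds}: simply-connected K\"ahler orbifolds with trivial orbifold canonical class. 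Thus the problem reduces to producing, for each $k$ in the stated range, a K3 orbifold $X$ together with Seifert data whose associated $5$-manifold is the required connected sum.

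First I would record the topological dictionary in the form developed in the earlier sections for quasi-regular Seifert bundles. The Gysin sequence for the Reeb $S^1$-action, together with $H^1_{\orb}(X)=0$ and a nonzero primitive Euler class $[e]\in H^2_{\orb}(X)$ (which is represented by the transverse K\"ahler form, hence nonzero), gives $b_2(M)=b_2^{\orb}(X)-1$. Simple-connectivity of $M$ requires $\pi_1^{\orb}(X)=1$ and $[e]$ primitive, while by Koll\'ar's computation the torsion of $H_2(M,\ZZ)$ is $\bigoplus_i(\ZZ/m_i)^{2g_i}$, summed over the branch components $D_i$ of multiplicity $m_i$ and genus $g_i$. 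Since $\#_k(S^2\times S^3)$ has torsion-free homology and is spin, I would arrange that all orbifold loci are rational (genus $0$) to kill the torsion, and choose $[e]$ so that $w_2(M)=0$, i.e. the Barden invariant $i(M)=0$. By the Smale--Barden classification a simply-connected spin $5$-manifold with $H_2(M,\ZZ)=\ZZ^k$ is then precisely $\#_k(S^2\times S^3)$, so verifying these three properties closes the argument.

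With this dictionary in hand, the construction amounts to exhibiting, for every $r=k+1\in\{3,\dots,22\}$, a simply-connected K3 orbifold $X$ with $b_2^{\orb}(X)=r$ whose singular/branch locus consists only of rational curves and cyclic quotient points. The top value $r=22$ is the smooth K3 surface, giving $k=21$. For the smaller values I would use singular K3 orbifolds---quasismooth K3 hypersurfaces in weighted projective spaces, or K3 surfaces carrying $A$-type (cyclic) Du Val singularities obtained by contracting disjoint configurations of $(-2)$-curves---each contraction lowering $b_2^{\orb}$ by the corresponding rank. Two points need care: one must keep $\pi_1^{\orb}(X)=1$, which cyclic singularities permit (unlike the binary dihedral or icosahedral cases), and one must tune the Seifert data so that all torsion is removed and $M$ is spin.

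The main obstacle is the extreme small case $k=2$ (and, historically, $k=17$, settled in \cite{CV}), where $b_2^{\orb}(X)=3$. Realizing such a tiny orbifold second Betti number forces a Calabi--Yau K3 orbifold with cyclic singularities of near-maximal total rank $19$, while remaining simply-connected with a purely rational locus and admitting Seifert data producing no torsion together with the spin Barden invariant. This is exactly the regime that the semi-regular (smooth-orbifold) constructions cannot reach, and where the quasi-regular Seifert techniques of this paper are essential; I expect the bulk of the work to lie in writing down an explicit such highly-singular K3 orbifold and verifying $\pi_1(M)=1$, $H_2(M,\ZZ)=\ZZ^2$ and $i(M)=0$.
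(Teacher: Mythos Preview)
Your overall strategy coincides with the paper's: realize $\#_k(S^2\times S^3)$ as a Seifert bundle over a K3 orbifold $X$ with $b_2(X)=k+1$ and $\pi_1^{\orb}(X)=1$, quote the known constructions (weighted hypersurfaces for $3\le k\le 21$, $k\neq 17$; weighted complete intersections for $k=17$), and isolate $k=2$ as the only genuinely new case. So the reduction is correct.

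There is, however, a real gap. For $k=2$ you only say that one needs a K3 orbifold with cyclic singularities of total rank $19$ and trivial orbifold fundamental group, and that ``the bulk of the work'' lies in writing one down. That \emph{is} the proof, and you have not supplied it. The paper produces this orbifold concretely: it starts from Shioda's extremal elliptic K3 surface $X'$ with an $I_{19}$ fibre, takes the $A_{19}$ chain $\Gamma$ formed by the zero section $O$ together with eighteen of the nineteen fibre components, and contracts $\Gamma$ to a single $A_{19}$ point, yielding $b_2(X)=22-19=3$. The nontrivial step---and the one your outline does not address at all---is showing $\pi_1^{\orb}(X)=\pi_1(X'-\Gamma)=1$. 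This is \emph{not} automatic for K3 surfaces minus large ADE configurations; the paper invokes the Shimada--Zhang criterion (conditions (Z1)--(Z2) for extremal elliptic fibrations with $MW_f=0$), checking $\mathrm{eu}(A_{19})=20\le 23$. Without an explicit surface and a mechanism to control $\pi_1$ of the complement, the $k=2$ case remains open in your write-up.

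A smaller point: your discussion of ``arranging all orbifold loci to be rational to kill torsion'' and of ``tuning Seifert data'' is off-target in the null setting. Koll\'ar's result (Proposition~10.2 in \cite{K}, quoted in the paper) shows that for a smooth Seifert total space with $H_1(M,\ZZ)=0$ over a Calabi--Yau orbifold $(X,\Delta)$, the branch divisor is forced to vanish: $\Delta=0$. Hence there are no isotropy surfaces $D_i$ at all, the torsion term $\bigoplus_i(\ZZ/m_i)^{2g_i}$ is empty automatically, and the only orbifold locus consists of isolated cyclic quotient points. The spin condition is likewise automatic for null Sasakian structures. So the checklist you propose is largely vacuous here; the entire content is the existence of the $b_2=3$ K3 orbifold with $\pi_1^{\orb}=1$.
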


Note that the formulation of Theorem A in the introduction of \cite{CV} is incorrect, the correct formulation should 
be ``$\#_k(S^2\times S^3)$ with $3\leq k\leq 21$ admits a null Sasakian structure".

Our basic reference containing elliptic surfaces, complex surfaces, and desingularization process is \cite{GS}.

\subsection*{Acknowledgements} 
The first author is supported by a PhD grant from Universidad de M\'alaga.
The second author was partially supported by Project MINECO (Spain) PGC2018-095448-B-I00. 
The fourth author was supported by the National Science Center (Poland), grant NCN no.\ 2018/31/D/ST1/00053. 
The fourth author started working on the topic during his stay at the Institut de Hautes \'Etudes Scientifiques at Bur-sur-Yvette. He is grateful to the Institute for the  support and the stimulating research atmosphere.

\section{Preliminaries on Seifert bundles}

We freely use the notion of cyclic orbifolds \cite{BG, K,K1, M}. Our basic reference will be \cite{M}. 

\begin{definition}[\cite{M}] \label{def:sing-mnfld} A cyclic singular symplectic (K\"ahler)  manifold is a symplectic cyclic 4-orbifold whose isotropy set is of dimension zero (that is, a finite set $P$ of points, called the singular set).
\end{definition}

For a cyclic singular symplectic 4-manifold a singular point is an isolated isotropy point $x\in P\subset X$. A local model around $x$ is of the form $\mathbb{C}^2/\mathbb{Z}_d$, where $\xi=\exp({2\pi i/d})$ acts as $\xi\cdot (z_1,z_2)=(\xi^{e_1}z_1,\xi^{e_2}z_2)$, where $\operatorname{gcd}(e_1,d)=\operatorname{gcd}(e_2,d)=1$. We will write $d=d(x)$.

\begin{definition}[\cite{M}] \label{def:sing-curve}
A sing-symplectic surface is a symplectic 2-orbifold $D\subset X$ such that if $x\in D$ is a singular point, then $D$ is fixed by the isotropy subgroup. 
Two sing-symplectic surfaces $D_1,D_2$ intersect nicely if at every intersection point $x\in D_1\cap D_2$ there is an adapted Darboux chart $(z_1,z_2)$ centered at $x$ such that $D_1=\{(z_1,0)\}$ and $D_2=\{(0,z_2)\}$ in a model $\mathbb{C}^2/\mathbb{Z}_d$, 
where $\mathbb{Z}_d<U(1)\x U(1)$.
\end{definition}

One can construct cyclic symplectic or K\"ahler orbifolds using the result below.

\begin{proposition}[\cite{M}]\label{prop:orb-constr} 
Let $X$ be a cyclic singular 4-manifold with the set of singular points $P$. Let $D_i$ be embedded sing-symplectic surfaces intersecting nicely, and take coefficients $m_i>1$ such that $\operatorname{gcd}(m_i,m_j)=1$, if $D_i$ and $D_j$ have a non-empty intersection. Then there exists an orbifold $X$ with isotropy surfaces $D_i$ of multiplicities $m_i$, and singular points $x\in P$ of multiplicity $m=d(x)\prod_{i\in I_x}m_i$, $I_x=\{i\,|\,x\in D_i\}.$
\end{proposition}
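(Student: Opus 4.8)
The plan is to build the orbifold structure on the underlying space of $X$ by a local construction: stratify according to the incidence with $\bigcup_i D_i$ and $P$, produce a uniformizing system for each stratum, and then check that these charts patch into an orbifold atlas carrying the symplectic (K\"ahler) structure. Away from $\bigcup_i D_i\cup P$ the space is already a manifold, so we keep the trivial charts. At a point $x_0\in D_i$ that is neither in $P$ nor an intersection point, choose an adapted Darboux (holomorphic) chart $(z_1,z_2)$ with $D_i=\{z_1=0\}$; the uniformizing system is $\mathbb{C}^2$ with coordinates $(w_1,w_2)$, the cyclic group $\mathbb{Z}_{m_i}$ acting by $w_1\mapsto\zeta_{m_i}w_1$, $w_2\mapsto w_2$, and the branched cover $(w_1,w_2)\mapsto(w_1^{m_i},w_2)=(z_1,z_2)$. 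This presents $D_i$ as an isotropy surface of multiplicity $m_i$.

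At an intersection point $x_0\in D_i\cap D_j$, which is a smooth point of the original manifold so $d(x_0)=1$, the nice-intersection hypothesis supplies a chart with $D_i=\{z_1=0\}$ and $D_j=\{z_2=0\}$; the uniformizing system is $\mathbb{C}^2$ with the branched cover $(w_1,w_2)\mapsto(w_1^{m_i},w_2^{m_j})$ and group $\mathbb{Z}_{m_i}\times\mathbb{Z}_{m_j}$. Here the coprimality $\gcd(m_i,m_j)=1$ is exactly what makes this group cyclic of order $m_im_j$, the asserted multiplicity. Since nice intersections involve only pairs of surfaces, no point lies on three of the $D_i$, so $|I_x|\le 2$ at every point and this exhausts the intersection locus.

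The heart of the construction is the chart at a singular point $x\in P$, where the pre-existing uniformizing group $\mathbb{Z}_{d(x)}$ acting by $(z_1,z_2)\mapsto(\xi^{e_1}z_1,\xi^{e_2}z_2)$ with $\gcd(e_k,d(x))=1$ must be combined with the branching along the (at most two) surfaces $D_i$, $i\in I_x$, which are invariant coordinate axes of the model. I would form the branched cover $\mathbb{C}^2\to\mathbb{C}^2/\mathbb{Z}_{d(x)}$ sending $(w_1,w_2)\mapsto[w_1^{m_{i_1}},w_2^{m_{i_2}}]$, where $m_{i_k}$ is the multiplicity of the $k$-th axis (taken to be $1$ if that axis is not one of the $D_i$); its deck group $G$ is a finite abelian subgroup of $U(1)\times U(1)$ of order $d(x)\prod_{i\in I_x}m_i$. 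The key point, and the main obstacle, is to show that $G$ is cyclic of this order, since a finite abelian subgroup of $U(1)\times U(1)$ need not be cyclic. One exhibits a single generator by choosing a lift $\gamma$ of the generator of $\mathbb{Z}_{d(x)}$ along the root maps and checking, using $\gcd(e_k,d(x))=1$ together with the coprimality of the $m_i$, that $\gamma$ and the branching automorphisms are all powers of one element of order $d(x)\prod_{i\in I_x}m_i$; already in the toy case $d(x)=m_i=2$ the lift $\gamma\colon(w_1,w_2)\mapsto(iw_1,-w_2)$ has order $4$ and generates the branching automorphism as $\gamma^2$. The weights of the action have denominator $d(x)\prod m_i$, and the primitivity hypotheses force their combination to be primitive.

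Finally I would verify compatibility and the geometric structure. On overlaps the charts are given by the same monomial root maps in overlapping adapted coordinates, so the transition maps are holomorphic (respectively symplectomorphisms in Darboux coordinates) and equivariant, and the uniformizing systems patch into an orbifold atlas on the unchanged underlying space. The complex structure lifts through the holomorphic root maps, yielding a complex orbifold which is K\"ahler after patching an invariant K\"ahler form with an orbifold partition of unity; in the symplectic case one transports the standard invariant symplectic form on each chart, agreeing with the structure of $X$ away from $\bigcup_i D_i\cup P$. Reading off the isotropy data then gives multiplicity $m_i$ along each $D_i$ and $d(x)\prod_{i\in I_x}m_i$ at each $x\in P$, completing the construction. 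The cyclicity verification at the points of $P$ is the one genuinely delicate step, and it is precisely $\gcd(e_k,d(x))=1$ and the coprimality of the multiplicities that make it go through.
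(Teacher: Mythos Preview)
The paper does not prove this proposition; it is stated with a citation to \cite{M} and used as a black box. So there is no proof in the paper to compare against.

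Your argument is essentially the standard one and is correct in outline. A small organizational point: you assert that an intersection point $x_0\in D_i\cap D_j$ ``is a smooth point of the original manifold so $d(x_0)=1$'', but the nice-intersection definition in the paper explicitly allows such points to lie in $P$ (the model is $\mathbb{C}^2/\mathbb{Z}_d$ with $\mathbb{Z}_d<U(1)\times U(1)$). You do cover this case later under ``singular point with $|I_x|=2$'', so nothing is actually missing, but the earlier sentence is misstated.

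On the substantive step, the cyclicity of the deck group $G$ at $x\in P$: your sketch is right and can be made precise as follows. With $d=d(x)$ and $I_x=\{i_1,i_2\}$ (the case $|I_x|\le 1$ is the same with one $m_{i_k}=1$), pick $\eta_k$ with $\eta_k^{m_{i_k}}=\xi^{e_k}$. Writing $\eta_k=\exp\big(2\pi i(e_k+j_kd)/(m_{i_k}d)\big)$, the condition $\gcd(e_k,d)=1$ forces every prime dividing $d$ to avoid $e_k+j_kd$, and for each prime $p\mid m_{i_k}$ with $p\nmid d$ one may choose $j_k$ modulo $p$ so that $p\nmid e_k+j_kd$; by the Chinese remainder theorem there is a $j_k$ making $\eta_k$ a primitive $m_{i_k}d$-th root of unity. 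Then $\gamma=(\eta_1,\eta_2)\in G$ has order $\mathrm{lcm}(m_{i_1}d,\,m_{i_2}d)=m_{i_1}m_{i_2}d$ since $\gcd(m_{i_1},m_{i_2})=1$, so $G$ is cyclic of the asserted order. One also reads off that the weight on the $w_2$-axis is $m_{i_1}e_2$ (and symmetrically), so the isotropy along $\{w_1=0\}$ is exactly $\mathbb{Z}_{m_{i_1}}$, matching the generic isotropy of $D_{i_1}$, which is what makes the charts glue. Your toy example $d=m_i=2$ with $\gamma=(i,-1)$ illustrates this correctly.
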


Our basic tool is a Seifert bundle over a sing-symplectic orbifold. 

\begin{definition} Let $X$ be a cyclic, oriented $n$-orbifold. A Seifert bundle over $X$ is an oriented $(n+1)$-dimensional manifold $M$ equipped with a smooth $S^1$-action and a continuous map $\pi:M\rightarrow X$ such that for an orbifold chart $(U,\tilde U,\mathbb{Z}_m,\varphi)$, there is a commutative diagram
$$
\CD
(S^1\times\tilde U)/\mathbb{Z}_m @>{\cong}>>\pi^{-1}(U)\\
@VVV @V{\pi}VV\\
\tilde U/\mathbb{Z}_m @>{\cong}>> U,
\endCD
$$
where the action of $\mathbb{Z}_m$ on $S^1$ is by multiplication by $\xi=\exp ({2\pi i/ m})$ and the top diffeomorphism is $S^1$-equivariant.
\end{definition}

The basic invariant of the Seifert bundle is the orbifold first Chern class.

\begin{definition} 
For a Seifert bundle $\pi: M\rightarrow X$, we define its first Chern class as follows. Let $l=\operatorname{lcm}(m(x)\,|\,x\in X)$. Denote by $M/l$ the quotient of $M$ by $\mathbb{Z}_l\subset S^1$. Then $M/l\rightarrow X$ is a circle fiber  bundle with the first Chern class $c_1(M/l)\in H^2(X,\mathbb{Z})$. Define
 $$
  c_1(M)={1\over l}c_1(M/l)\in H^2(X,\mathbb{Q}).
  $$
\end{definition}
\begin{definition} We say that an element $a$ in a free abelian group $A$ is primitive, if it cannot be represented as $a=kb$ with a non-trivial $b\in A, k\in\mathbb{N}$.
\end{definition}
Here is the main construction  tool of this paper.

\begin{theorem}[{\cite[Lemma 39]{M}}]\label{thm:constr-tool} 
Let $(X,\omega)$ be a cyclic symplectic 4-orbifold with a collection of embedded symplectic surfaces $D_i$ intersecting nicely, and integer numbers $m_i>1$, with $\operatorname{gcd}(m_i,m_j)=1$ whenever $D_i\cap D_j\not= \emptyset$. Assume that there are local invariants $(m_i,j_i,j_x)$, $x\in P$. Let $b_i$ be integers such that $j_ib_i\equiv 1\,(\operatorname{mod}\,m_i), m=\operatorname{lcm}(m_i)$. Then there is a Seifert bundle $\pi: M\rightarrow X$ such that
\begin{enumerate}
\item It has the first orbifold Chern class $c_1(M)=[\hat\omega]$ for an orbifold symplectic form $\hat\omega$.
\item If $\sum_i{b_im\over m_i}[D_i]\in H^2(X-P,\mathbb{Z})$ is primitive and the second Betti number $b_2(X)\geq 3$, then we can further have that $c_1(M/m)\in H^2(X-P,\mathbb{Z})$ is primitive.
\end{enumerate}
\end{theorem}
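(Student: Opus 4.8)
The plan is to build $M$ from its orbifold first Chern class, exploiting the classification of Seifert bundles over a cyclic orbifold by an orbifold line bundle (equivalently, by $c_1(M)\in H^2(X,\QQ)$) recalled from \cite{M}. First I would fix the ``fractional part'' of $c_1(M)$ dictated by the local invariants: under the dictionary between the Seifert data $(m_i,j_i,j_x)$ and the orbifold Chern class, with $b_ij_i\equiv 1\pmod{m_i}$, a Seifert bundle with these invariants has
\[
c_1(M)=\lambda+\sum_i\frac{b_i}{m_i}[D_i]+\sum_{x\in P}(\text{contribution of }j_x),
\]
where $\lambda\in H^2(X,\ZZ)$ is the free (Cartier) part, the only remaining choice; conversely every such class is realized by a Seifert bundle with the prescribed local invariants. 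Thus the whole problem reduces to choosing the integer class $\lambda$ so that (1) and (2) hold, and this is where the hypotheses $b_2(X)\geq 3$ and the primitivity of $\sum_i(b_im/m_i)[D_i]$ enter.

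For part (1) I would invoke the openness of the symplectic cone. The class $[\omega]$ lies in the open cone $\cC\subset H^2(X,\RR)$ of classes carrying a compatible orbifold symplectic form, so $t[\omega]\in\cC$ for all $t>0$ and the distance from $t[\omega]$ to $\partial\cC$ grows linearly in $t$. Since the admissible classes form the coset $f+H^2(X,\ZZ)$ with fixed rational $f=\sum_i(b_i/m_i)[D_i]+(\cdots)$, and a lattice coset meets every ball whose radius exceeds the covering radius of the lattice, for $t$ large enough the coset meets $\cC$ near $t[\omega]$. Choosing $c=\lambda+f$ to be such a representative makes $c\in\cC$, so $c$ is represented by an orbifold symplectic form $\hat\omega$; the associated Seifert bundle then has $c_1(M)=[\hat\omega]$, proving (1).

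For part (2) I would use the remaining freedom in $\lambda$. Restricting to $X-P$ kills the point contributions, so $c_1(M/m)|_{X-P}=m\lambda+\nu$ with $\nu=\sum_i(b_im/m_i)[D_i]$, which is primitive by hypothesis. Fixing a basis of the free part of $H^2(X-P,\ZZ)$ and writing $m\lambda+\nu$ in coordinates, primitivity amounts to the coordinates having trivial gcd. For a prime $p\mid m$ one has $m\lambda+\nu\equiv\nu\not\equiv 0\pmod p$ automatically, because $\nu$ is primitive; these are exactly the primes the hypothesis is designed to control. For a prime $p\nmid m$, $m$ is invertible modulo $p$, so each coordinate of $m\lambda+\nu$ can be prescribed modulo $p$ by moving $\lambda$; having $b_2(X-P)\geq 3$ coordinates available, a Chinese Remainder / Dirichlet argument lets me force two coordinates to be coprime integers, killing all such $p$ at once. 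Hence $\lambda$ can be chosen with $m\lambda+\nu$ primitive.

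The point needing care — and the main obstacle — is that (1) and (2) constrain the \emph{same} parameter $\lambda$ and must be arranged simultaneously. Part (1) forces $\lambda$ to lie deep along the $[\omega]$ direction (an open, full-dimensional condition on the coset), while part (2) asks $\lambda$ to avoid the congruence loci where $m\lambda+\nu$ fails to be primitive. Because the symplectic condition cuts out an open cone's worth of lattice points, and primitivity is generic (its failure for $p\nmid m$ is a positive-codimension congruence, and for $p\mid m$ it never occurs), the two are compatible: within the open set of $\lambda$ giving a symplectic $c$ one can still run the coprimality argument above. Verifying this compatibility rigorously — that enough symplectic $\lambda$ survive the sieve, using $b_2\geq 3$ — is the delicate step; the rest is bookkeeping with the Seifert dictionary and the openness of $\cC$.
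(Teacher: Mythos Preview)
The paper does not actually prove this theorem: it is quoted verbatim as \cite[Lemma 39]{M} and used as a black box throughout, so there is no ``paper's own proof'' to compare your proposal against. Your outline is in the right spirit for how such results are established --- parametrize Seifert bundles with the prescribed local invariants by the free class $\lambda\in H^2(X,\ZZ)$, then choose $\lambda$ so that $c_1(M)=\lambda+f$ lands in the open symplectic cone (part (1)) and so that $m\lambda+\nu$ is primitive (part (2)) --- and this is indeed the shape of the argument in \cite{M}.

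That said, two points in your sketch would need tightening before it counts as a proof. First, your treatment of primitivity for primes $p\nmid m$ is vague: ``a Chinese Remainder / Dirichlet argument lets me force two coordinates to be coprime'' is the right slogan, but you have not said which two coordinates, nor why $b_2\geq 3$ (rather than $\geq 2$) is the correct threshold. The clean way is to change basis so that $\nu$ is a basis vector; then $m\lambda+\nu$ has one coordinate congruent to $1$ modulo $m$, and it suffices to make two of the \emph{remaining} coordinates coprime --- this is where a third coordinate is genuinely needed, and where the freedom to perturb $\lambda$ within the open symplectic cone is used. Second, your compatibility argument (``primitivity is generic, symplectic is open, so they coexist'') is heuristic; one really needs that inside the symplectic cone the lattice points of the coset with two specified coordinates coprime have positive density, or an explicit construction moving $\lambda$ along directions that stay in the cone while adjusting the relevant coordinates. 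None of this is deep, but it is exactly the ``delicate step'' you flagged, and it is not yet done.
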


This combines with the folllowing basic result for characterizing K-contact and Sasakian structures.

\begin{theorem}[{\cite[Theorems 7.5.1, 7.5.2]{BG}}]\label{thm:seifert.} 
Let  $(M,g,\xi,\eta,J)$ be a quasi-regular Sasakian manifold. Then the space of leaves $X$ of the foliation 
determined by the Reeb field $\xi$ has a natural structure of a cyclic Kähler orbifold, and the projection $M\rightarrow X$ is a Seifert bundle.
Conversely, if $(X,\omega)$ is a Kähler cyclic orbifold and $M$ is the total space of the Seifert bundle determined by the class $[\omega]$, 
then $M$ admits a quasi-regular Sasakian structure.
\end{theorem}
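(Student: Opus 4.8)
The plan is to prove the two implications separately, in each case passing through the equivalence between a Sasakian structure on $M$ and a transverse K\"ahler structure along the one-dimensional characteristic foliation $\mathcal{F}_\xi$. For the forward direction, since $M$ is compact the Reeb field $\xi$ is complete, and being Killing its flow consists of isometries. Quasi-regularity means every leaf of $\mathcal{F}_\xi$ is a closed circle, so the flow integrates to a \emph{locally free} $S^1$-action, whose isotropy at a point is a finite subgroup of $S^1$ and hence cyclic, say $\mathbb{Z}_m$. Using a transverse slice $\tilde U$ to an orbit together with the exponential map of $g$, I would produce an orbifold chart $\tilde U/\mathbb{Z}_m$ for the leaf space $X=M/S^1$ and simultaneously the local trivialization $(S^1\times\tilde U)/\mathbb{Z}_m\cong\pi^{-1}(U)$ that exhibits $\pi\colon M\to X$ as a Seifert bundle. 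The cyclic condition on $X$ is then automatic, since every isotropy group embeds into $S^1$.

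Next I would descend the transverse geometry to $X$. The contact conditions give $d\eta(\xi,\cdot)=0$, and since $\xi$ is Killing one has $\mathcal{L}_\xi\eta=0$ and $\mathcal{L}_\xi J=0$; hence both $\tfrac12 d\eta$ and the restriction of $J$ to $\ker\eta$ are basic, i.e.\ invariant and horizontal, and therefore descend to a $2$-form $\omega$ and an almost complex structure $J_X$ on $X$. The compatibility $d\eta(JX,JY)=d\eta(X,Y)$ together with the positivity condition makes $\omega$ a K\"ahler form for the metric induced from $g$. The only substantial point is the integrability of $J_X$, which I would read off from the integrability of $I$ on the cone $C(M)$: the vanishing of the Nijenhuis tensor of $I$ forces the transverse Nijenhuis tensor of $J$ to vanish, so $J_X$ is a genuine complex structure and $(X,J_X,\omega)$ is a cyclic K\"ahler orbifold.

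For the converse, given a cyclic K\"ahler orbifold $(X,\omega)$ and the Seifert bundle $M$ with $c_1(M)=[\omega]$, I would build the Sasakian data directly. By orbifold Chern--Weil theory one can choose a connection $1$-form $\eta$ on the $S^1$-orbibundle $M$ whose curvature represents $c_1(M)$, normalized so that $\tfrac12 d\eta=\pi^*\omega$ transversally; since $\omega^2$ is a volume form on $X$, this $\eta$ is a contact form, with Reeb field $\xi$ the normalized generator of the $S^1$-action. I then set $J$ to be the horizontal lift of $J_X$ on $\ker\eta$ with $J\xi=0$, so that $J^2=-\mathrm{Id}+\xi\otimes\eta$, and define $g(X,Y)=d\eta(JX,Y)+\eta(X)\eta(Y)$. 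All the algebraic Sasakian axioms and the Killing property of $\xi$ then follow because the entire construction is assembled from $S^1$-invariant, $\omega$-compatible data, and the resulting structure is quasi-regular precisely because its Reeb orbits are the closed $S^1$-orbits.

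The hard part, and the only place where K\"ahlerness rather than mere compatibility is used, is verifying that the almost complex structure $I$ on the cone is integrable. For this I would compute the Nijenhuis tensor of $I$ in a horizontal--vertical frame adapted to the connection and show that it decomposes into the Nijenhuis tensor of $J_X$ on $X$ together with the $(0,2)$-part of the curvature $d\eta$; since $J_X$ is integrable and $d\eta$ is of type $(1,1)$ (being a multiple of the K\"ahler form $\omega$), both contributions vanish and $I$ is integrable. This identifies $(C(M),I)$ as a K\"ahler cone and hence endows $M$ with a quasi-regular Sasakian structure. The remaining orbifold bookkeeping at the singular points, namely checking that the chosen connection and the horizontal lifts are compatible with the local uniformizing charts $\tilde U/\mathbb{Z}_m$, is routine but must be carried out to guarantee that all the constructions are globally well defined.
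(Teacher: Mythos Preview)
The paper does not give its own proof of this theorem: it is stated as a background result, with the proof deferred entirely to the reference \cite[Theorems 7.5.1, 7.5.2]{BG}. So there is nothing in the paper to compare your argument against line by line.

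That said, your outline is the standard one and matches the strategy in Boyer--Galicki. The forward direction---using quasi-regularity to get a locally free $S^1$-action, hence cyclic isotropy, and then descending the transverse K\"ahler data $(\tfrac12 d\eta, J|_{\ker\eta})$ via $\mathcal{L}_\xi\eta=0$, $\mathcal{L}_\xi J=0$---is exactly how the result is obtained there. Likewise in the converse, choosing a connection $1$-form $\eta$ with curvature $\pi^*\omega$, lifting $J_X$ horizontally, and checking integrability of $I$ on the cone by showing that the obstruction splits into $N_{J_X}$ and the $(0,2)$-part of $d\eta$ (both of which vanish because $(X,\omega,J_X)$ is K\"ahler) is the standard computation. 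One small remark: when you write ``since $\omega^2$ is a volume form on $X$'' you are implicitly restricting to the $4$-dimensional base relevant to this paper; the general statement needs $\omega^n\neq 0$. Also, in the forward direction you should say a word about why the isotropy action on the slice $\tilde U$ is effective, so that the orbifold really is cyclic in the sense of the paper (rather than having a global ineffective kernel); this is where the contact condition $\eta\wedge(d\eta)^n\neq 0$ is used to rule out a nontrivial kernel acting trivially on the transversal. With those minor additions your sketch would be a faithful summary of the cited proof.
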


In the same way, one can characterize quasi-regular K-contact manifolds considering symplectic orbifolds 
instead of K\"ahler ones \cite[Theorems 19 and 21]{MRT}.

Note that by \cite{Ruk}, any manifold which admits a Sasakian or K-contact structure, admits a quasi-regular one. Therefore, since we are interested in the existence questions, we can and we will assume that we are dealing with quasi-regular structures.

\medskip

Finally, let us formulate a result which guarantees that the Seifert bundle $M\rightarrow X$ constructed in Theorem \ref{thm:constr-tool} has $H_1(M,\ZZ)=0$. 

\begin{theorem}[{\cite[Theorem 36]{M}}]\label{thm:H1=0} 
Suppose that $\pi:M\rightarrow X$ is a quasi-regular Seifert bundle over a cyclic orbifold $X$ with isotropy surfaces $D_i$ and set of singular points $P$.  Let $m=\operatorname{lcm}(m_i)$. Then $H_1(M,\mathbb{Z})=0$ if and only if
\begin{enumerate}
\item $H_1(X,\mathbb{Z})=0$,
\item $H^2(X,\mathbb{Z})\rightarrow \oplus H^2(D_i,\mathbb{Z}_{m_i})$ is surjective,
\item $c_1(M/m)\in H^2(X-P,\mathbb{Z})$ is a primitive class.
\end{enumerate}
Moreover, $H_2(M,\mathbb{Z})=\mathbb{Z}^k\oplus\big(\mathop{\oplus}\limits_i\mathbb{Z}_{m_i}^{2g_i}\big)$, 
$g_i=$\,genus of $D_i$, $k+1=b_2(X)$.
\end{theorem}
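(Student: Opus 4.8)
The plan is to compute the integral cohomology of $M$ from the Gysin sequence of the Seifert $S^1$-action and then to read each of the conditions (1)--(3) off that sequence. Since the $S^1$-action on $M$ has only finite cyclic stabilizers, the Borel construction $M_{S^1}=M\times_{S^1}ES^1$ is a model for the classifying space of the orbifold $X$, so $H^*_{S^1}(M)=H^*_{\mathrm{orb}}(X)$, and the free circle bundle $M\simeq M\times ES^1\to M_{S^1}$ has Euler class the orbifold Chern class $c_1=c_1(M)\in H^2_{\mathrm{orb}}(X)$. Its Gysin sequence reads
$$\cdots\to H^{i-2}_{\mathrm{orb}}(X)\xrightarrow{\cup c_1}H^{i}_{\mathrm{orb}}(X)\to H^{i}(M)\to H^{i-1}_{\mathrm{orb}}(X)\xrightarrow{\cup c_1}H^{i+1}_{\mathrm{orb}}(X)\to\cdots,$$
and everything reduces to understanding $H^{\le 3}_{\mathrm{orb}}(X)$ together with the map $\cup c_1$.

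Next I would compute the low-degree orbifold cohomology of $X$ by decomposing it into its smooth part, tubular neighborhoods of the isotropy surfaces $D_i$ (disk bundles with fiber group $\ZZ_{m_i}$), and cone neighborhoods of the points of $P$, and running the resulting Mayer--Vietoris/Leray spectral sequence. Over a neighborhood of $D_i$ the orbifold cohomology is $H^*(D_i)\otimes H^*(B\ZZ_{m_i})$ with $H^{2}(B\ZZ_{m_i})=\ZZ_{m_i}$, producing exactly the torsion classes $H^0(D_i)\otimes\ZZ_{m_i}=\ZZ_{m_i}$ in degree $2$ and $H^1(D_i)\otimes\ZZ_{m_i}=\ZZ_{m_i}^{2g_i}$ in degree $3$. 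The expected outcome is: $H^1_{\mathrm{orb}}(X)=H^1(X;\ZZ)$ is torsion-free; $H^2_{\mathrm{orb}}(X)$ has free part of rank $b_2(X)$ with torsion detected by restriction to the $D_i$; and $\mathrm{tors}\,H^3_{\mathrm{orb}}(X)=\bigoplus_i\ZZ_{m_i}^{2g_i}$. The restriction $H^2_{\mathrm{orb}}(X)\to\bigoplus_i H^2(D_i;\ZZ_{m_i})$ of condition (2) is precisely the natural map measuring the degree-$2$ torsion.

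With these groups in hand I would read off the two low degrees. Because $c_1=[\omega]$ is non-torsion, $\cup c_1\colon H^0_{\mathrm{orb}}(X)\to H^2_{\mathrm{orb}}(X)$ is injective, so the sequence gives $H^1(M)=H^1_{\mathrm{orb}}(X)=H^1(X;\ZZ)$, whence $b_1(M)=0$ exactly when $b_1(X)=0$; the torsion of $H_1(X)$ re-enters via $H^2(X;\ZZ)\subset H^2_{\mathrm{orb}}(X)$, so condition (1) is used in both degrees. Once $H^1_{\mathrm{orb}}(X)=0$ one has $H^2(M)=\operatorname{coker}\big(H^0_{\mathrm{orb}}(X)\xrightarrow{\cup c_1}H^2_{\mathrm{orb}}(X)\big)$. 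Primitivity of $c_1(M/m)$ (condition (3)) says $c_1$ spans a primitive line in the free part, so the quotient creates no new torsion, while surjectivity onto the $\bigoplus_i H^2(D_i;\ZZ_{m_i})$ (condition (2)) is exactly what forces the degree-$2$ orbifold torsion to die in the cokernel, leaving $H^2(M)$ torsion-free. Since $\mathrm{tors}\,H^2(M)=\mathrm{tors}\,H_1(M)$ by universal coefficients and $b_1(M)=0$, this yields $H_1(M)=0$; running the same sequences backwards shows each condition is also necessary. Finally $H^2(M)$ is free of rank $b_2(X)-1=k$, and the degree-$3$ torsion $\bigoplus_i\ZZ_{m_i}^{2g_i}$ of $H^3_{\mathrm{orb}}(X)$ survives into $H^3(M)$, so dualizing gives $H_2(M)=\ZZ^k\oplus\big(\bigoplus_i\ZZ_{m_i}^{2g_i}\big)$.

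The main obstacle I anticipate is the torsion bookkeeping in $H^2_{\mathrm{orb}}(X)$, specifically separating the contribution of the isolated points $P$ from that of the surfaces $D_i$. A neighborhood of $x\in P$ has orbifold cohomology that of $B\ZZ_{m(x)}$, contributing a $\ZZ_{m(x)}$ in degree $2$; a priori such a class could survive into $H^2(M)=\operatorname{coker}(\cup c_1)$ and create spurious torsion in $H_1(M)$. The resolution is that $M$ is a smooth manifold, so its local model over $x$ is a genuine smooth $5$-manifold with vanishing local $H_1$; this geometric input is what renders the point classes harmless, and it is mirrored in the fact that condition (3) is imposed on $X-P$ rather than on $X$. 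I would therefore carry out the cokernel computation over $X^*=X-P$, where $M^*/m\to X^*$ is an honest circle bundle with integral Euler class $c_1(M/m)$ and the only torsion comes from the $D_i$, and then verify that filling the points back in---codimension $4$ on $X$, codimension $5$ on $M$---changes neither $H_1(M)$ nor $\mathrm{tors}\,H_2(M)$. Making this excision-and-filling comparison precise is the step demanding the most care, and it is where hypotheses (2) and (3) must be invoked exactly as stated.
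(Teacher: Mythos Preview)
The paper does not prove this theorem; it is quoted verbatim from \cite[Theorem 36]{M} and used as a black box, so there is no in-paper proof to compare against. Your outline via the Gysin sequence of the Borel construction $M\to M_{S^1}\simeq B_{\mathrm{orb}}X$ is the standard route and is essentially how such results are established (Koll\'ar's \cite{K,K1} and the semi-regular case in \cite{MRT} proceed in the same spirit).

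Two points deserve tightening. First, your sentence ``surjectivity onto $\bigoplus_i H^2(D_i;\ZZ_{m_i})$ is exactly what forces the degree-$2$ orbifold torsion to die in the cokernel'' conflates two mechanisms: the cokernel of $\cup c_1\colon H^0_{\mathrm{orb}}\to H^2_{\mathrm{orb}}$ is $H^2_{\mathrm{orb}}/\ZZ\cdot c_1$, and a single class cannot kill a nontrivial torsion summand. What condition (2) actually does is force $\mathrm{tors}\,H^2_{\mathrm{orb}}(X)$ itself to vanish (once $\mathrm{tors}\,H^2(X;\ZZ)=\mathrm{tors}\,H_1(X;\ZZ)=0$ from condition (1)): in the Leray/Mayer--Vietoris comparison, the potential $\ZZ_{m_i}$ classes sitting over $D_i$ are hit by restriction from $H^2(X;\ZZ)$ precisely when (2) holds, so they do not contribute new torsion to $H^2_{\mathrm{orb}}$. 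With $H^2_{\mathrm{orb}}(X)$ free, primitivity of $c_1(M/m)$ (condition (3)) then makes the cokernel free of rank $b_2(X)-1$. Stating it this way also makes the converse transparent.

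Second, your treatment of the isolated points $P$ is on the right track but should be made into an actual argument rather than a heuristic. The clean way is exactly what you suggest at the end: work over $X^*=X\setminus P$, where $M^*/m\to X^*$ is an honest principal $S^1$-bundle with integral Euler class $c_1(M/m)$, run the Gysin sequence there, and then use that the inclusions $M^*\hookrightarrow M$ and $X^*\hookrightarrow X$ are isomorphisms on $H_1$ and on $\mathrm{tors}\,H_2$ (links of the cyclic quotient singularities are rational homology spheres, and their preimages in $M$ are simply connected lens-space-type pieces). This is why the hypothesis in (3) lives on $X\setminus P$; make that excision step explicit and the proof is complete.
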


For the convenience of references we will also interchangeably  follow notation and terminology of \cite{BG} and \cite{K, K1}. If $X$ is a cyclic orbifold with singular set $P$ and a family of surfaces $D_i$ we will say that we are given a divisor $\cup_iD_i$, with multiplicities $m_i>1$. The formal sum
$\Delta=\sum_i(1-{1\over m_i})D_i$  will be called the {\it branch divisor}. By definition, the orbifold fundamental group $\pi_1^{\orb}(X)$ is defined as 
 $$
 \pi_1^{\orb}(X)=\pi_1(X-(\Delta\cup P))/\langle \gamma_i^{m_i}=1\rangle,
 $$
where $\langle\gamma_i^{m_i}=1\rangle$ denotes the following relation on $\pi_1(X-(P\cup\Delta))$: for any small loop $\gamma_i$ around a surface  $D_i$ in the branch divisor, one has $\gamma_i^{m_i}=1$.
We will systematically use without notice the following exact sequence (see the general formulation in \cite[Theorem 4.3.18]{BG}) .
If $M\rightarrow X$ is a Seifert fibration, then there is an exact sequence
  $$
  \ldots\rightarrow \pi_1(S^1)=\mathbb{Z}\rightarrow \pi_1(M)\rightarrow \pi_1^{\orb}(X)\rightarrow 1.
  $$
It follows that if $H_1(M,\ZZ)=0$ and $\pi_1^{\orb}(X)=1$, then $M$ must be simply connected. We will use this observation without further notice.

\section{Examples of K\"ahler cyclic orbifolds}\label{sec:examples}

The following appears in \cite[Section III.5]{BPV}.

\begin{lemma} \label{lem:Hirz-Jung}
Consider the action of the cyclic group $\ZZ_m$ on $\CC^2$ given by $(z_1,z_2)\mapsto (\xi z_1,\xi^r z_2)$, where
$\xi=e^{2\pi i /m}$, $0<r<m$ and $\gcd(r,m)=1$.
 Then write a continuous fraction
 $$
  \frac{m}{r}=[b_1,\ldots, b_l]=b_1- \frac{1}{b_2-\frac{1}{b_3- \ldots}}
  $$
The resolution of $\CC^2/\ZZ_m$ has an exceptional divisor formed by a chain of 
smooth rational curves of self-intersections
$-b_1,-b_2,\ldots,-b_l$.
\end{lemma}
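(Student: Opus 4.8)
The plan is to prove this via toric geometry: realize $\CC^2/\ZZ_m$ as an affine toric surface, describe its minimal resolution as an explicit subdivision of the associated cone, and then read off both the chain structure and the self-intersection numbers from the combinatorics of the lattice. First I would set up the toric data. Writing $\CC^2 = U_{\sigma_0}$ for the cone $\sigma_0 = \mathrm{Cone}(e_1,e_2)$ in the standard lattice $\tilde N = \ZZ^2$, the monomial $z_1^a z_2^b$ is $\ZZ_m$-invariant precisely when $a + rb \equiv 0 \pmod m$, so the invariant subring is the semigroup ring attached to the sublattice $M = \{(a,b)\in\ZZ^2 : a+rb\equiv 0 \pmod m\}$ of $\tilde M = \ZZ^2$. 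Dualizing, $\CC^2/\ZZ_m = U_\sigma$ with the \emph{same} cone $\sigma = \sigma_0$ now taken inside the overlattice $N = \ZZ^2 + \ZZ\cdot\tfrac1m(1,r)$. Using $\gcd(r,m)=1$ one checks that $e_1$ and $e_2$ remain primitive in $N$, so they are the primitive generators $u_0 = e_1$ and $u_{l+1} = e_2$ of the two edges of $\sigma$.

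Next I would describe the resolution. Let $u_1,\dots,u_l$ be the successive primitive lattice points of $N$ lying on the boundary of the convex hull of $(\sigma\cap N)\setminus\{0\}$. Subdividing $\sigma$ along the rays $\RR_{\ge 0}u_i$ produces a fan all of whose two-dimensional cones $\mathrm{Cone}(u_i,u_{i+1})$ are smooth, because consecutive vertices of this lattice convex hull form a $\ZZ$-basis of $N$; hence the induced toric morphism $\tilde U \to U_\sigma$ is a resolution, and it is minimal since no interior ray can be dropped without reintroducing a singular cone. Each interior ray $u_i$ ($1\le i\le l$) gives a torus-invariant curve $E_i\cong\PP^1$; $E_i$ meets $E_{i+1}$ transversally in the torus-fixed point of $\mathrm{Cone}(u_i,u_{i+1})$ while non-consecutive $E_i$ are disjoint, so the exceptional locus is a chain.

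Then comes the combinatorial heart. By construction the boundary points obey a recursion $u_{i-1}+u_{i+1} = b_i u_i$ with integers $b_i\ge 2$; unwinding this recursion is exactly the Hirzebruch--Jung algorithm and identifies $[b_1,\dots,b_l]$ with the negative continued fraction of $m/r$. Finally, the standard toric self-intersection formula for a smooth surface reads $u_{i-1}+u_{i+1} = -E_i^2\, u_i$ for the adjacent primitive generators, so comparing with $u_{i-1}+u_{i+1}=b_i u_i$ gives $E_i^2 = -b_i$. Together these produce a chain of smooth rational curves with self-intersections $-b_1,\dots,-b_l$, as claimed.

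I expect the main obstacle to be bookkeeping rather than conceptual: pinning down the orientation so that the relevant fraction is $m/r$ and not $m/(m-r)$ (these correspond to starting the chain from $e_1$ versus $e_2$, and one should sanity-check against $A_{m-1}=\tfrac1m(1,m-1)$, where $m/(m-1)=[2,\dots,2]$ must yield $m-1$ curves of self-intersection $-2$), together with the careful verification that consecutive convex-hull vertices are a lattice basis and satisfy the recursion with all $b_i\ge 2$. As an alternative avoiding toric language, one could argue by induction on $m$ mirroring the continued-fraction algorithm: a single weighted blow-up produces one rational exceptional curve and leaves a cyclic quotient singularity of strictly smaller order $\tfrac1r(1,r')$, reproducing the next coefficient $b_1=\lceil m/r\rceil$; there the delicate point is tracking how the self-intersection of the already-produced curve changes under each subsequent blow-up.
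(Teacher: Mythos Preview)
Your toric argument is correct and is the standard modern route to this result (as in Fulton's \emph{Introduction to Toric Varieties}, \S2.6, or Cox--Little--Schenck). There is essentially nothing to compare against, however: the paper does not prove the lemma but simply records that it ``appears in \cite[Section III.5]{BPV}'' and then passes to the converse direction (contracting a Hirzebruch--Jung chain to produce a cyclic singularity). For what it is worth, the argument in BPV is closer in spirit to the inductive alternative you sketch at the end---explicit affine charts and successive blow-ups tracked by the Euclidean-type algorithm---rather than to the toric convex-hull picture you give as your main line; both, of course, encode the same continued-fraction combinatorics. Your flag about the $m/r$ versus $m/(m-r)$ orientation is the only genuine thing to verify, and your $A_{m-1}$ sanity check is the right way to pin it down.
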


Conversely, let $X$ be a smooth surface containing a chain of smooth rational curves $E_1,\ldots, E_l$ of self-intersections
$-b_1,-b_2,\ldots,-b_l$, with all $b_i\geq 2$, intersecting transversally (so that $E_i\cap E_{i+1}$ are nodes, $i=1,\ldots, l-1$).
Let $\pi:X\to \bar X$ be the contraction of $E=E_1\cup\ldots \cup E_l$. Then $\bar X$ has a cyclic singularity at $p=\pi(E)$,
with an action given by Lemma \ref{lem:Hirz-Jung}. Moreover, if $D$ is a curve
intersecting tranversally a tail of the chain (that is, either $E_1$ or $E_l$ at a non-nodal point),
then the push down curve $\bar D=\pi(D)$ is a sing-complex curve (as in Definition \ref{def:sing-curve}). This can be seen 
as follows. 
By \cite[Corollary 2.5]{R1}, if $z_1,z_2$ are the coordinates of $\CC^2$, then the quotient $Z=\CC^2/\ZZ_m$
is described by coordinates $u_0,u_1,\ldots, u_{l+1}$, where 
 $$
  u_0=z_1^m, \qquad u_{i-1}u_{i+1}=u_i^{a_i}\,  \text{ for } i=1,\ldots, l, \qquad u_{l+1}=z_2^m,
 $$
where $\frac{m}{r-m}=[a_1,\ldots, a_l]$. The resolution is obtained by a (weighted) 
blow-up at the origin $(u_0,u_1,\ldots, u_{l+1})=(0,\ldots, 0)$
of  $Z\subset \CC^{l+2}$.
The exceptional divisor is the subset $E$ of a suitable weighted projective space $\PP^{l+1}_w$, 
with coordinates $[w_0,w_	1,\ldots, w_{l+1}]$ parametrized by the equations 
$w_{i-1}w_{i+1}=w_i^{a_i}$, $i=1,\ldots, l$. The divisor $E_i$ is given by the equations $w_0=\ldots=w_{i-2}=0$,
$w_{i+2}=\ldots=w_{l+1}=0$. The strict transform of $\{z_1=0\}$ only intersects $E_1$, and the strict transform
of $\{z_2=0\}$ only intersects $E_l$.

\medskip
 
Our aim now is to construct a K\"ahler cyclic orbifold with the second homology generated by elliptic curves. 
This will provide a counterexample to Conjecture \ref{conj} in the case where $X$ is assumed
to be a complex cyclic orbifold.

\begin{theorem}\label{thm:g=1}
For any $b\geq1$, there is a K\"ahler cyclic orbifold $X$ with $b_1=0$, $b_2=b$ and 
$b$ disjoint curves, all of genus $1$, whose classes span $H^2(X,\QQ)$.
\end{theorem}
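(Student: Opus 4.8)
The plan is to realise $X$ as a Hirzebruch--Jung contraction, using the converse to Lemma~\ref{lem:Hirz-Jung} stated above. First I would produce a smooth projective surface $\tilde X$ with $b_1=0$ containing $b$ pairwise disjoint smooth elliptic curves $D_1,\dots,D_b$, together with a family of pairwise disjoint chains of smooth rational curves of self-intersection $\le -2$, arranged so that each chain is disjoint from every $D_i$ except that it may meet a single $D_i$ transversally at one non-nodal endpoint. Contracting all of these chains by a morphism $\pi\colon \tilde X\to X$ then produces a normal projective surface with cyclic quotient singularities, i.e.\ a K\"ahler cyclic orbifold, in which each $\bar D_i=\pi(D_i)$ is a sing-complex curve of genus $1$ in the sense of Definition~\ref{def:sing-curve}. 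For $b=1$ no contraction is needed: $X=\PP^2$ with $D_1$ a smooth cubic already has $b_2=1$ and $[D_1]=3h$ spanning $H^2(\PP^2,\QQ)$.

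The bookkeeping is then straightforward. A contraction of rational curves leaves $b_1$ unchanged, so $b_1(X)=0$; and if the classes of the $D_i$ together with the classes of all contracted rational curves form a $\QQ$-basis of $H^2(\tilde X,\QQ)$, then killing the contracted classes gives $b_2(X)=b$ with the images $[\bar D_i]$ spanning $H^2(X,\QQ)$. Disjointness of the $\bar D_i$ is guaranteed because no chain joins two different $D_i$. The crucial point is the intersection form: since $\tilde X$ is rational we have $p_g=0$, so $H^2(X,\QQ)=H^{1,1}$ and the Hodge index theorem forces signature $(1,b-1)$; as the $\bar D_i$ are disjoint, this form is the diagonal matrix $\diag(\bar D_1^2,\dots,\bar D_b^2)$, so I need exactly one positive and $b-1$ strictly negative orbifold self-intersections. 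Here the contraction does the essential work: if a tail chain contracting to a cyclic quotient singularity meets $D_1$ once (say a single $(-c)$-curve $C$), then $\pi^*\bar D_1=D_1+\tfrac1c C$ and a local computation gives $\bar D_1^{\,2}=D_1^2+\tfrac1c$, so a curve with $D_1^2=0$ upstairs acquires a positive rational self-intersection downstairs while the remaining $D_i$ are kept negative. Finally $X$ is K\"ahler because a configuration of curves with negative definite intersection matrix is contractible to a projective normal surface (by Grauert's criterion).

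The main obstacle is the construction of $\tilde X$ itself, and it is precisely the content of Conjecture~\ref{conj} in the smooth category. Indeed, if $D_1,\dots,D_b$ were disjoint smooth genus-$1$ curves forming a basis of $H^2$ on a smooth surface with $b_1=0$, then writing $K_{\tilde X}=\sum c_i[D_i]$ and combining adjunction $0=D_i^2+K_{\tilde X}\cdot D_i=(1+c_i)D_i^2$ with the nondegeneracy of the intersection form (which forces $D_i^2\ne0$) would give $c_i=-1$ for all $i$, i.e.\ $-K_{\tilde X}=\sum D_i$ would be an effective disjoint union of $b$ elliptic curves spanning $H^2$ --- exactly the rigid configuration excluded by the conjecture. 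Passing to the orbifold $X$ replaces $K_{\tilde X}$ by the orbifold canonical class and makes the self-intersections fractional, which breaks the rigid identity $c_i=-1$ and frees up the indefinite signature. Concretely I would build $\tilde X$ by blowing up $\PP^2$ (or a rational elliptic surface, whose reducible fibres already supply chains of $(-2)$-curves for the contraction) along a configuration of cubics chosen so that, after contracting the chains, one elliptic class becomes positive and the remaining $b-1$ stay negative. The delicate points are to keep the $b$ elliptic classes $\QQ$-independent modulo the contracted classes and to make the number of contracted curves equal to $b_2(\tilde X)-b$; I expect to verify independence, spanning and the signature $(1,b-1)$ by an explicit computation in the N\'eron--Severi lattice of the blow-up, possibly treating the smallest cases $b=2,3$ separately by hand.
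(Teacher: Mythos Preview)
Your framework is exactly the paper's: build a smooth rational surface $\tilde X$ by blowing up $\CP^2$, arrange $b$ disjoint smooth elliptic curves together with a contractible chain of rational curves meeting only one of them at a tail, and contract the chain to get the cyclic orbifold $X$. Your intersection-theoretic bookkeeping (signature $(1,b-1)$, the fractional jump in self-intersection after contracting a tail chain) is correct and matches what the paper does.

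What is missing, however, is the explicit construction --- and that is the actual content of the proof, not a detail to be filled in later. The paper's concrete choice is a pencil of smooth cubics in $\CP^2$ with a \emph{single} base point $p$ of multiplicity $9$. Such pencils exist: the paper shows via a fixed-point argument on the Jacobian that for a given smooth cubic $C$ there are $81$ points $p\in C$ admitting a second cubic $C'$ with $C\cdot C'=9p$ (alternatively, take an inflection point and the triple tangent line). Choose $b$ smooth members $C_1,\dots,C_b$ of this pencil. Blow up $9$ times infinitely near $p$; this produces a chain $E_1,\dots,E_8$ of $(-2)$-curves ending in a $(-1)$-curve $E_9$, and the strict transforms $\tilde C_i$ (now of self-intersection $0$) are pairwise disjoint, each meeting only $E_9$ transversally at a point $x_i$. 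Blow up once more at $x_2,\dots,x_b$: now $D_i^2=-1$ for $i\ge2$, $\tilde C_1^2=0$, $\tilde E_9^2=-b$, and $\{\tilde C_1,D_2,\dots,D_b,E_1,\dots,E_8,\tilde E_9\}$ is a $\QQ$-basis of $H^2(\tilde X)$. Contracting the chain $E_1,\dots,E_8,\tilde E_9$ (continued fraction $[b,2,\dots,2]=\tfrac{9b-8}{9}$) gives a cyclic singularity of order $9b-8$; the image $D_1$ of $\tilde C_1$ passes through it and acquires $D_1^2=\tfrac1{9b-8}>0$, while $D_2,\dots,D_b$ avoid it entirely. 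Your proposal stops precisely where this begins: you leave ``which configuration of cubics'' and the lattice verification as ``delicate points'' you ``expect to verify'', but those steps are the proof.
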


\begin{proof}
Consider a regular cubic $C$ inside the projective plane $\CC P^2$. In particular, it has genus $1$. 
Consider now the following short exact sequence of sheaves,
  $$
  0\rightarrow\mathcal{O}_{\CC P^2}\rightarrow\mathcal{O}_{\CC P^2}(C)\rightarrow\mathcal{O}_C(C)\rightarrow0.
  $$
It is known that $h^0(\mathcal{O}_{\CC P^2})=1$, $h^1(\mathcal{O}_{\CC P^2})=0$ and 
$h^0(\mathcal{O}_{\CC P^2}(C))=h^0(\mathcal{O}_{\CC P^2}(3))=10$. Therefore, we get the following exact short sequence in cohomology,
 $$
 0\rightarrow H^0(\mathcal{O}_{\CC P^2})\rightarrow H^0(\mathcal{O}_{\CC P^2}(C))\rightarrow H^0(\mathcal{O}_C(C))\rightarrow 0,
 $$
from where we deduce that $h^0(\mathcal{O}_C(C))=9$. This implies that the linear system 
$|\mathcal{O}_C(C)|$ has dimension $8$. Hence, for any $8$ points chosen in $C$,
we can find a cubic $C'\in|C|$ which intersects $C$ at those points.

Take $p\in C$ and $C'$ a cubic such that $C\cdot C'=8p + q$ for some $q\in C$. 
This defines a map $\varphi:C\to C$ given by $q=\varphi(p)$. We look for a fixed point of $\varphi$.
We describe the map in an alternative way. Fix some base-point $p_0\in C$. This produces an
isomorphism
 $$
 F: C\to \Jac\, C, \quad F(p)= p-p_0
 $$
where $\Jac\, C$ is the Jacobian of degree $0$ divisors. Consider the divisor $D_0=8p_0+\varphi(p_0) \in
 |\mathcal{O}_C(C)|$. All divisors $8p+\varphi(p)$ are equivalent, hence 
 $\varphi(p)\equiv D_0 - 8p$, so 
  \begin{align*}
  F(\varphi(p)) &= \varphi(p)-p_0 = D_0 -8p -p_0= \\ &=  \varphi(p_0)-p_0 -  8(p-p_0)=F(\varphi(p_0))-8 F(p).
  \end{align*}
Hence
  $$
   \varphi(p)= F^{-1}(F(\varphi(p_0))-8F(p)).
  $$
As multiplication (by $k=-8$) and addition in $\Jac\, C$ are morphisms, then $\varphi$ is a morphism. 

To find a fixed point $p\in C$, we note that $\varphi(p)=p$ is equivalent to $F(p)=F(\varphi(p))=F(\varphi(p_0))-8F(p)$,
i.e.\ $9F(p)=F(\varphi(p_0))$. The map 
 $$
 m_9: \Jac\, C\to \Jac\, C, \quad m_9(D)=9D
 $$
given by multiplication by $9$ in the divisors of degree $0$, is a map of degree $81=9^2$. The divisor $s_0=F(\varphi(p_0))$
has $81$ preimages $r_i\in \Jac\,C$, $1\leq i\leq 81$. Then $p_i=F^{-1}(r_i)$ are the $81$ solutions to the
equation $9F(p_i)= 9 r_i= s_0=F(\varphi(p_0))$, that is, to the equation $\varphi(p_i)=p_i$.

Now fix one of these $81$ points $p=p_i$. Then take a
cubic $C'$ such that $C\cdot C'=9p$. Take sections $s,s'\in H^0(\mathcal{O}_{\CC P^2}(3))$ 
with $C$ and $C'$ as zero loci. We then get a pencil $|\langle s,s'\rangle| \subset |\mathcal{O}_{\CC P^2}(3)|$ of cubics intersecting 
at $p$ with multiplicity $9$. Choose $b$ regular cubics inside this pencil, $C_1,\ldots, C_b$. 
These all have genus $1$, self-intersection $C_i^2=9$ 
and any two of them intersect only at $p$, with multiplicity $9$.

Blow-up the plane $\CP^2$, $9$ times at $p$ and denote the resulting surface
by $\widetilde{\CP}{}^2$. That is, we blow-up at $p$, then at the intersection
point of the strict transforms of the $C_i$, and so on. We obtain a chain of $9$ rational curves $E_1,\ldots, E_9$. 
All the exceptional divisors $E_1,\ldots, E_8$ have self-intersection $E_j^2=-2$, $j=1,\ldots, 8$. The
last exceptional divisor $E_9$ satisfies $E_9^2=-1$, and the strict 
transforms of the curves $C_1,\ldots, C_b$, that we denote $\tilde{C}_1,\ldots,\tilde{C}_b$, 
are now pairwise disjoint, each of them 
intersects $E_9$ at one point $x_i$, and all are disjoint from $E_1,\ldots, E_8$. Note that $\tilde C_i^2=0$, $1\leq i\leq b$.
Blow-up $\widetilde{\CP}{}^2$ at the points $x_2,\ldots, x_b$ and denote by $\tilde X$ this new 
surface. Since we obtained $\tilde X$ from $\CC P^2$ by $9+(b-1)$ blow-ups, $b_1(\tilde X)=0$ and $b_2(\tilde X)=9+b$. 
Denote by $\tilde{E}_9$ the strict transform of $E_9$ 
and by $D_i$ the strict transform of $\tilde{C}_i$, $2\leq i\leq b$. Now
$\tilde{E}_9^2=-b$, $D_i^2=-1$, $2\leq i\leq b$, $\tilde{C}_1^2=0$.
It is clear that $\tilde{C}_1,D_2,\ldots, D_b, E_1,\hdots, E_8, \tilde E_9$ form a basis of $H^2(X,\QQ)$.

\begin{figure}[ht]
\begin{center}
\includegraphics[width=8cm]{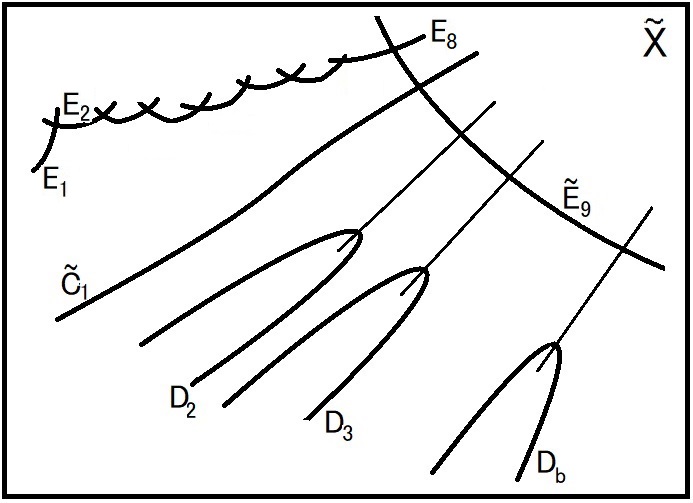}
\end{center}
\end{figure}

Now we contract the chain of $(-2)$-curves given by $E_1, \hdots, E_8,$
extended by the $(-b)$-curve $\tilde E_9$.
By Lemma \ref{lem:Hirz-Jung}, this produces a cyclic singularity. As $[b,2,\stackrel{(8)}\ldots, 2]=\frac{9b-8}{9}$, the
singularity is modelled on $\CC^2/\ZZ_{9b-8}$ with the action of $\xi=\exp(2\pi i /(9b-8))$ via $(z_1,z_2)\mapsto (\xi z_1,\xi^{9}z_2)$.
The result is a cyclic orbifold $X$ with $b_1(X)=0$, $b_2(X)=b$ and $b$ disjoint curves $D_1,D_2,\dots D_b$ all of genus $1$.
Here $D_1$ is the push down of $\tilde{C}_1$ and has self-intersection 
 $D_1^2=\tilde{C}_1^2 +\frac{1}{9b-8}=\frac{1}{9b-8} >0$.
\end{proof}

\begin{remark}
We can simplify the construction in the proof of Theorem \ref{thm:g=1}.
Take a cubic $C$ and a point $p\in C$ which is an inflection. Then let
$L$ be the tangent line, and $C'=3L$ is a cubic and satisfies that $C\cap 3L=9p$. Now take the 
pencil generated by them $|\la C,3L\ra|$. The generic curve is smooth because there is a curve,
namely $C$, that is smooth. All of them intersect at $p$ with multiplicity $9$.

The construction in Theorem \ref{thm:g=1} produces $81$ points. As the number of inflections is $9$,
there are pencils as in Theorem \ref{thm:g=1} which do not come from inflections.
\end{remark}

\begin{remark}
\label{rem:RES}
The surface $\widetilde{\CP}{}^2$ above is a so-called extremal rational elliptic surface,
i.e.\ with finite group of sections.
Similar constructions can be carried out for any other extremal rational elliptic surface
(as classified in \cite{MP}).
Then one can always contract suitable configurations of eight $(-2)$-curves inside the singular fibres
(like $E_1,\hdots,E_8$).
The role of $E_9$ is played by some section.
\end{remark}

\begin{remark}
On Enriques surfaces, the same arrangement of the singular fibres can be achieved
as on the rational elliptic surfaces in Remark \ref{rem:RES}.
The section $E_9$, however, has to be replaced by a rational curve
with $b-1$ nodes, serving as a bisection of the elliptic fibration,
which becomes smooth exactly after blowing up
the intersection points with $b-1$ regular fibres $C_2,\hdots,C_b$.
Such surfaces can be constructed systematically using the base change approach
from \cite{HS}.
Unlike $X$ below in Proposition \ref{prop:pi1orb}, they will not have trivial $\pi_1^{\orb}$.
\end{remark}

We put an orbifold structure on the surface $X$ constructed in Theorem \ref{thm:g=1} by assigning coefficients $m_i$ 
to each $D_i$, $i=1,\ldots, b$, using Proposition \ref{prop:orb-constr}. Let us compute the orbifold fundamental group.

\begin{proposition} \label{prop:pi1orb}
We have $\pi_1^{\orb}(X)=1$.
\end{proposition}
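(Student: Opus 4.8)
The plan is to realise $\pi_1^{\orb}(X)$ as a quotient of the fundamental group of the complement of the orbifold locus. Writing $E=E_1\cup\cdots\cup E_8\cup\tilde E_9$ for the contracted chain, the contraction $\tilde X\to X$ restricts to a homeomorphism $X\setminus\{q\}\cong\tilde X\setminus E$, so that
$$U:=X\setminus\big(\textstyle\bigcup_i D_i\cup\{q\}\big)\;\cong\;\tilde X\setminus\big(\textstyle\bigcup_i D_i\cup E\big).$$
By definition $\pi_1^{\orb}(X)=\pi_1(U)/\langle\!\langle\gamma_i^{m_i}\rangle\!\rangle$; the singular point $q$, which lies on $D_1$, contributes no further relation, since its local orbifold group is cyclic and generated by loops (the meridian $\gamma_1$ and the link generator) already present in $U$. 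As $\tilde X$ is an iterated blow-up of $\CP^2$ it is simply connected, so a standard transversality argument shows $\pi_1(U)$ is generated by the meridians $\gamma_1,\dots,\gamma_b$ of the $D_i$ together with $\gamma_{E_1},\dots,\gamma_{E_8}$ and $w:=\gamma_{\tilde E_9}$.

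First I would pin down the abelianization: $H_1(U)$ is the cokernel of $H_2(\tilde X;\ZZ)\to\ZZ^{b+9}$, $\alpha\mapsto(\alpha\cdot Z_j)_j$, where the $Z_j$ run over the $b+9$ components of $\bigcup_i D_i\cup E$. Using the blow-up description $D_1=3H-\sum_k e_k$, $D_i=3H-\sum_k e_k-f_i$ $(i\ge2)$, $E_k=e_k-e_{k+1}$ and $\tilde E_9=e_9-\sum_i f_i$, and evaluating against the basis $\{H,e_1,\dots,e_9,f_2,\dots,f_b\}$, one gets $\gamma_i=-w$ for $i\ge2$, $\gamma_{E_k}=k(\gamma_1-(b-1)w)$, together with $9\gamma_1=(9b-8)w$ and $3(\gamma_1-(b-1)w)=0$. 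These force $w=0$, hence $\gamma_i=0$ for $i\ge2$, $\gamma_{E_k}=k\gamma_1$ and $3\gamma_1=0$; equivalently the Gram matrix of the $Z_j$ has determinant $\pm9$, so the lattice they span has index $3$ and $H_1(U)=\ZZ_3=\langle\gamma_1\rangle$. The factor $3$ is exactly the degree of the cubic $D_1$, and every meridian becomes a power of the single class $\gamma_1$.

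The crux is to upgrade this to the non-abelian level, i.e.\ to prove $\pi_1^{\orb}(X)$ is cyclic, generated by $\gamma_1$. Every component of $\bigcup_i D_i\cup E$ other than the $D_i$ is rational, and the exceptional $(-1)$-spheres $F_i$ meet the removed locus only along $D_i$ and $\tilde E_9$; thus $F_i\cap U\cong\CC^*$ yields the genuine relation $\gamma_i=w^{-1}$ in $\pi_1(U)$, the plumbing relations along the $(-2)$-chain give $\gamma_{E_k}=\gamma_{E_1}^{\,k}$, and the relation carried by $\tilde E_9$ reads $\gamma_{E_1}^{\,8}\gamma_1=w^{\,b}$ (the $F_i$-meridians are trivial, as $F_i\not\subset Z$). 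What remains, and is the main obstacle, is to show $w=1$: the genus-one curve $D_1$ carries the relation $[a_1,b_1]=w^{\pm1}$ for the two loops $a_1,b_1$ on the torus $D_1$, so it suffices to show these loops die in $\pi_1(U)$. I would obtain this from the elliptic fibration $h:\tilde X\to\PP^1$, in which $D_1$ is a smooth fibre (Remark \ref{rem:RES}): $a_1,b_1$ are vanishing cycles of the singular fibre containing $E_1,\dots,E_8$ and bound Lefschetz thimbles that can be kept disjoint from the remaining removed curves, hence are nullhomotopic in $U$. With $w=1$ the relations above express every generator as a power of $\gamma_1$, so $\pi_1^{\orb}(X)$ is cyclic.

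Finally, a cyclic group equals its abelianization, so $\pi_1^{\orb}(X)=H_1^{\orb}(X)=\ZZ_3/\langle m_1\gamma_1\rangle=\ZZ_{\gcd(3,m_1)}$. Since the multiplicity $m_1$ of the curve $D_1$ through the singular point is taken coprime to $3$ (the remaining $m_i$ play no role here, as $\gamma_i=0$ already in $H_1(U)$), this group is trivial and $\pi_1^{\orb}(X)=1$. The only genuinely delicate point is the vanishing $w=1$ of the third paragraph; everything else is the homological computation of the second paragraph together with formal plumbing and van Kampen bookkeeping.
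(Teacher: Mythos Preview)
Your homological computation is correct and, in fact, exposes a genuine error in the paper's proof. The sublattice of $H_2(\tilde X;\ZZ)$ spanned by the removed curves $D_1,\dots,D_b,E_1,\dots,E_8,\tilde E_9$ is exactly $\langle 3H,e_1,\dots,e_9,f_2,\dots,f_b\rangle$, of index $3$, so $H_1(U)=\ZZ_3$ with generator $\gamma_1$. This is already incompatible with the paper's conclusion that $\pi_1(X-\cup D_i)=1$. The slip in the paper is the value $D_1^2=\tfrac{1}{9b-8}$: the correct orbifold self-intersection is $D_1^2=\tfrac{9}{9b-8}$ (the contribution from the contracted chain is $-(M^{-1})_{99}=9/(9b-8)$, as one checks directly, or via $b=1$ where $D_1=C_1$ and $C_1^2=9$). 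With the correct value the Seifert relation on $\partial U(D_1)$ reads $x^d=\gamma_1^{-9}$, so from $x=1$ one only gets $\gamma_1^9=1$, not $\gamma_1=1$; this is consistent with $\gamma_1$ having order $3$. Consequently the proposition as stated is not quite true: one needs $\gcd(m_1,3)=1$, exactly as you say. (For the application in Corollary~\ref{cor:g=1} one simply takes the prime $p\neq 3$.)

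Your route and the paper's run parallel at the key geometric step---both contract the torus loops $a_1,b_1$ via Lefschetz thimbles for a nearby singular cubic---but you feed this into the relation $[a_1,b_1]=w^{\pm1}$ (which is correct: near the transverse intersection $\tilde C_1\cap\tilde E_9$ one has $U(\tilde C_1)\cap U\simeq(\tilde C_1\setminus\{x_1\})\times(D^2\setminus\{0\})$, and the boundary loop of the punctured torus is $w$) to kill $w$, whereas the paper tries to kill $\gamma_1$ directly, which fails. A couple of your intermediate formulas are stated a bit loosely: the ``plumbing relations'' $\gamma_{E_k}=\gamma_{E_1}^k$ and $\gamma_{E_1}^8\gamma_1=w^{b}$ are not literally relations in $\pi_1(U)$ coming from the removed $(-2)$-spheres (those spheres are deleted, so their tubular boundaries are lens spaces with extra punctures, and the resulting van Kampen relations are more intricate than a simple monomial identity). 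What does go through cleanly is: (i) the $F_i$ are \emph{not} removed and give honest relations $\gamma_i w=1$; (ii) the thimble argument gives $a_1=b_1=1$, hence $w=1$ and $\gamma_i=1$ for $i\ge2$; (iii) the remaining meridians $\gamma_{E_k}$ can then be eliminated by a second application of the vanishing-cycle argument (using a singular fibre \emph{other} than the one carrying $E_1,\dots,E_8$, so the thimbles stay in $U$), which shows that every generator is a word in $\gamma_1$. With that adjustment your conclusion $\pi_1^{\orb}(X)\cong\ZZ_{\gcd(3,m_1)}$ is correct, and so is the proposition under the extra hypothesis $3\nmid m_1$.
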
 

\begin{proof}
 Let $\pi:\tilde X\to \CP^2$ be the result of the successive blow-ups of $\CP^2$, with the tori $\tilde{C}_1,D_2,\ldots, D_b$ 
 and exceptional curves ${E}_1,\ldots, E_8, \tilde E_9$ and $E_{10},\ldots, E_{8+b}$ coming from the blow-ups at $x_2,\ldots,x_b$.
 We denote $E ={E}_1\cup \ldots\cup E_8\cup \tilde E_9 \cup E_{10}\cup \ldots \cup E_{8+b}$.
 The curves $C_1=\pi(\tilde C_1)$, $C_i=\pi(D_i)$, $i=2,\ldots, b$, 
 are smooth cubic curves intersecting at the point $p=\pi(E)$. Denote $D'=\tilde C_1\cup D_2\cup \ldots \cup D_b \cup E\subset \tilde X$
 and $C=\cup C_i\subset \CP^2$. Then there is an isomorphism 
  $$
  \tilde X-U(D') \cong \CP^2 - U(C),
  $$
 where $U(D'),U(C)$ denote small tubular neighbourhoods of $D',C$, respectively.  
 The fundamental group of $C_i$ is generated by two loops $\alpha_i,\beta_i$. 
 The loops $\alpha_i,\beta_i$ are homotopic to $\alpha_j,\beta_j$ in $U(C)$ (it is enough to construct $C_i$ close
 together and generic in the pencil). Moreover, they are contractible in $U(C)$. 
 For this it is enough to take a curve $C_1$ close to a cuspidal curve, the point $p$
 well away from the cusp, and all other curves $C_i$ close to $C_1$. 

 Now consider the blow-down map 
  $$
\varpi:\tilde X\to X
   $$
and the images $D_1=\varpi(\tilde C_1)$, $D_i=\varpi(D_i)$, $i=2,\ldots, b$. Recall that $D_1$ contains the
singular point $q=\varpi(F)$, where $F= E_1\cup \ldots\cup E_8 \cup \tilde E_9$. 
Let $2\leq i\leq b$. The boundary $\bd U(D_i)$ is 
generated by $\alpha_i,\beta_i$ and a loop $\gamma_i$ around $D_i$.  As $D_i^2=-1$, we have
$\gamma_i^{-1}=[\alpha_i,\beta_i]$. As $\alpha_i,\beta_i$ is contractible in $\tilde X-U(D)$, hence in 
$X- U(\cup D_i)=\tilde X-U(D)$, where $D=\tilde C_1\cup D_2\cup \ldots \cup D_b \cup F\subset D'$,
since $F\subset E$.
There is a surjective map 
 $$
  \pi_1(\tilde X-U(D'))=\pi_1(\CP^2-U(C)) \longrightarrow \pi_1(\tilde X-U(D))=\pi_1(X-U(\cup D_i)).
 $$

Therefore the group $\pi_1(X-U(\cup D_i))$ is generated by a loop around $D_1$, and by a loop going around
a neighbourhood of $q$, which is the image of a loop in $\bd U(F)$. The point $q$ is modeled on a space 
 $\CC^2/\ZZ_d$ and the link is a lens space. Let $x\in \pi_1(\CC^2/\ZZ_d-\{q\})\cong \ZZ_d$ be 
 a generator, where $d=9b-8>1$ is the order of the  singular point.  
 The orbifold fundamental group of $D_1$ is 
  $$
  \pi_1^{\orb}(D_1)=\la \alpha_1,\beta_1,x\,  | \, [\alpha_1,\beta_1] x=1, x^d=1\ra.
  $$
There is a Seifert fibration $S^1\to \bd U(D_1)\to D_1$, which gives an exact sequence
 $$
 0\to  \ZZ\la \gamma_1\ra \to \pi_1(\bd U(D_1)) \to \pi_1^{\orb}(D_1) \to 0.
 $$
 As $D_1^2=\frac1d$, we have that
   $$
   \pi_1(\bd U(D_1))= \la  \alpha_1,\beta_1,x ,\gamma_1\,| \, [\alpha_1,\beta_1] x=1, x^d=\gamma_1,\gamma_1 \text{ central} \ra.
  $$
As we mentioned before, $\alpha_1,\beta_1$ contract in $\CP^2-C$. So $x=1$ and $\gamma_1=x^d=1$. 
Therefore $\pi_1(X-\cup D_i)=1$. The orbifold fundamental group $\pi_1^{\orb}(X)$ is the quotient with the conditions
$\gamma_i^{m_i}=1$. In any case, $\pi_1^{\orb}(X)=1$.
\end{proof}

By \cite[Corollary 10.2.11]{BG}, a Smale-Barden manifold which admits a K-contact structure necessarily satisfies the 
{\it G-K condition}, which means that, in terms of the expression (\ref{eqn:H2-1}):
 \begin{itemize}
 \item for every prime $p$, $t(p)=\#\{i\, | \, c(p^i)>0\}\leq k+1$,
 \item  $i(M)\in\{0,\infty\}$; if $i(M)=\infty$ ($M$ non-spin), then $t(2) \leq k$.
 \end{itemize}
In \cite[Question 10.2.1]{BG} it is asked whether a Smale-Barden manifold which 
satisfies the G-K conditon admits a Sasakian structure.
Write
 $$
 \mathbf{t}(M)=\max\{ t(p)|\,p \text{  prime}\} \leq k+1.
$$
The difficulty to obtain examples increases as we go to the upper bound, since we can always  discard surfaces from the
isotropy locus. The examples of \cite{CMRV,MRT} are instances where the upper
bound $\mathbf{t}(M)=k+1$ is achieved.

Note that the case $\mathbf{t} =0$ is that of torsion-free Smale-Barden manifolds, where
we only have regular Sasakian structures, and all G-K manifolds admit Sasakian structures.
The next case is $\mathbf{t}=1$, which is studied in detail in \cite{MT}. 
All G-K manifolds with $\mathbf{t}=1$ and $k\geq 1$ admit semi-regular Sasakian structures, and
hence the manifolds admitting Sasakian and K-contact structures are the same.
In the borderline case $\mathbf{t}=1,k=0$, the results in \cite{MT} are only partial
and touch upon open questions on symplectic $4$-manifold topology.

Write also
 $$
 \mathbf{c}(M)=\frac12 \max\{ c(p^i)\} =\max \{g_i\}.
 $$
in terms of the expression for $H_2(M,\ZZ)$ given in Theorem \ref{thm:H1=0}.
It is hard to get examples with low $\mathbf{c}(M)$ and $\mathbf{t}(M)=k+1$, $k=b_2(M)$.
In \cite{CMRV} there is given an example with $\mathbf{c}(M)=3$, and in \cite{M} there appears an
example with $\mathbf{c}(M)=2$.  Theorem \ref{thm:g=1} allows us to improve the bound.

\begin{corollary} \label{cor:g=1}
 There is a simply connected $5$-manifold $M$ admitting a (quasi-regular) Sasakian structure with 
 $\mathbf{t}(M)=b_2(M)+1$ and $\mathbf{c}(M)=1$. 
\end{corollary}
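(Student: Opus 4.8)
The plan is to feed the orbifold $X$ of Theorem \ref{thm:g=1} into the Seifert bundle machinery, choosing the multiplicities so as to concentrate all the torsion of $H_2(M,\ZZ)$ at a single prime. Fix an integer $b\geq 3$ and let $X$ be the K\"ahler cyclic orbifold of Theorem \ref{thm:g=1}, with its $b$ disjoint genus-one curves $D_1,\ldots,D_b$, where $D_1$ carries the cyclic singularity $q$ of order $9b-8$ and $D_i^2=-1$ for $2\leq i\leq b$. Since the $D_i$ are pairwise disjoint, the coprimality requirement in Proposition \ref{prop:orb-constr} is vacuous, so I am free to assign multiplicities $m_i=p^{a_i}$ that are pairwise distinct powers of one fixed prime $p$, subject only to $p\neq 3$ (the reason for this choice appears below). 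By Proposition \ref{prop:pi1orb} the resulting orbifold satisfies $\pi_1^{\orb}(X)=1$.

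Next I would construct the Seifert bundle $\pi:M\to X$ using Theorem \ref{thm:constr-tool}: choosing local invariants $(m_i,j_i,j_x)$ and integers $b_i$ with $j_ib_i\equiv 1\ (\mathrm{mod}\ m_i)$, part (1) produces a class $c_1(M)=[\hat\omega]$ for an orbifold K\"ahler form, and part (2), whose hypothesis $b_2(X)=b\geq 3$ holds, lets me arrange $c_1(M/m)$ to be primitive once $\sum_i \frac{b_im}{m_i}[D_i]$ is primitive in $H^2(X-P,\ZZ)$, where $m=\operatorname{lcm}(m_i)=p^{\max a_i}$. Granting the hypotheses of Theorem \ref{thm:H1=0}, we get $H_1(M,\ZZ)=0$, and combined with $\pi_1^{\orb}(X)=1$ and the Seifert homotopy exact sequence this forces $M$ to be simply connected; moreover $H_2(M,\ZZ)=\ZZ^{b-1}\oplus\big(\oplus_{i=1}^b \ZZ_{p^{a_i}}^{2}\big)$, since every $g_i=1$ and $k+1=b_2(X)=b$. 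As $X$ is K\"ahler, Theorem \ref{thm:seifert.} endows $M$ with a quasi-regular Sasakian structure.

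Reading off the invariants from this last decomposition, the torsion subgroup is $\oplus_{i=1}^b\ZZ_{p^{a_i}}^2$ with the exponents $a_i$ distinct, so $c(p^{a_i})=2$ for each $i$ and hence $t(p)=b$, while $t(\ell)=0$ for every prime $\ell\neq p$. Thus $\mathbf{t}(M)=b=(b-1)+1=b_2(M)+1$, and since $\max\{g_i\}=1$ we get $\mathbf{c}(M)=\tfrac12\max\{c(p^i)\}=1$, as required.

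The main obstacle is verifying conditions (2) and (3) of Theorem \ref{thm:H1=0}. For the surjectivity condition (2), note $H^2(D_i,\ZZ_{m_i})\cong\ZZ_{m_i}$ and the restriction $H^2(X,\ZZ)\to H^2(D_i,\ZZ_{m_i})$ sends a class $\alpha$ to $(\alpha\cdot D_i)\ \mathrm{mod}\ m_i$. For $2\leq i\leq b$ the class $[D_i]$ restricts to $D_i^2=-1$ on $D_i$ and to $0$ on the other (disjoint) curves, so these classes already surject onto $\oplus_{i\geq 2}\ZZ_{m_i}$; for the remaining factor I would use the class $\bar H$ of the image of a generic line of $\CP^2$, which avoids $q$ and satisfies $\bar H\cdot D_i=3$ for all $i$ (each $D_i$ is the image of a cubic), so that $\bar H+3\sum_{i\geq 2}[D_i]$ maps to $(3,0,\ldots,0)$; since $p\neq 3$, the entry $3$ is a unit mod $p^{a_1}$ and surjectivity follows. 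The genuinely delicate point is the primitivity condition (3): one must check that $\sum_i\frac{b_im}{m_i}[D_i]$ is primitive in $H^2(X-P,\ZZ)$. Here the coefficient of the top term $[D_{i_0}]$ with $a_{i_0}=\max a_i$ equals $b_{i_0}$, which is coprime to $p$; analyzing the image of the $[D_i]$ in $H^2(X-P,\ZZ)$ near the singular point $q$, and choosing $p$ coprime to the remaining intersection data, should yield primitivity and thereby complete the verification.
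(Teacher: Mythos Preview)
Your approach is the same as the paper's: take the orbifold $X$ of Theorem \ref{thm:g=1}, assign to the $D_i$ multiplicities that are distinct powers of a single prime $p$, invoke Theorem \ref{thm:constr-tool} for the Seifert bundle and Theorem \ref{thm:H1=0} for the homology, and use $\pi_1^{\orb}(X)=1$ together with the Seifert homotopy sequence to get simple connectivity. The homology formula then gives $\mathbf{t}(M)=k+1$ and $\mathbf{c}(M)=1$ on the nose.

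The one place where the paper is cleaner is exactly the point you flag as delicate. The paper simply sets $b_i=1$ and $m_i=p^i$; then $\sum_i\frac{b_im}{m_i}[D_i]=\sum_i p^{\,b-i}[D_i]$ has the coefficient of $[D_b]$ equal to $1$, and since $D_b$ is disjoint from the singular set with $D_b^2=-1$, pairing the sum against $[D_b]$ gives $-1$, so primitivity in $H^2(X-P,\ZZ)$ is immediate. Your more general choice of $b_i$ and $a_i$ only obscures this; there is no need to ``choose $p$ coprime to the remaining intersection data'' once you fix $b_i=1$. Likewise your restrictions $b\geq 3$ and $p\neq 3$ are harmless for the existence statement but not imposed in the paper (which, on the other hand, does not verify condition (2) of Theorem \ref{thm:H1=0} as explicitly as you do).
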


\begin{proof}
Let $b\geq 1$ and consider the orbifold $X$ constructed in Theorem \ref{thm:g=1}. It contains $b$ disjoint
complex curves of genus $1$ spanning the homology. Choose a prime $p$ and consider coefficients $m_i=p^i$.
Using Proposition \ref{prop:orb-constr}, we give $X$ an orbifold structure with isotropy locus given by the $D_i$ 
with coefficients $m_i$. We assign local invariants $b_i=1$, so that Theorem \ref{thm:constr-tool} 
allows to choose a Seifert bundle $M\to X$ whose Chern class is an orbifold K\"ahler form and $c_1(M/m)$ is primitive. Using
Theorem \ref{thm:H1=0} we have $H_1(M,\ZZ)=0$. Note that $\pi_1^{\orb}(X)=1$ by 
Proposition \ref{prop:pi1orb}, hence $\pi_1(X)=1$ and so $H_1(X,\ZZ)=0$.
By Theorem \ref{thm:H1=0} we have
 \begin{equation}\label{eqn:k=k}
  H_2(M,\ZZ)= \ZZ^k \oplus \big( \mathop{\oplus}\limits_{i=1}^{k+1} \ZZ_{m_i}^2 \big),
  \end{equation}
with $m_i=p^i$ and $k=b-1$. To see that $M$ is Smale-Barden, we need to compute $\pi_1(M)$.
There is an exact sequence $\ZZ \to \pi_1(M)\to \pi_1^{\orb}(X)=1$, hence $\pi_1(M)$ is abelian.
As $H_1(M,\ZZ)=0$, it must be $\pi_1(M)=0$. This concludes the result.
\end{proof}

\begin{corollary}\label{cor:extra}
 The manifold $M$ constructed in Corollary \ref{cor:g=1} is Sasakian quasi-regular but does not admit a Sasakian semi-regular structure.
 \end{corollary}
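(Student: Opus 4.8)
The plan is to argue by contradiction; the quasi-regular half is already contained in Corollary \ref{cor:g=1}, since the Seifert bundle $M\to X$ over the K\"ahler cyclic orbifold $X$ of Theorem \ref{thm:g=1} is quasi-regular Sasakian by Theorem \ref{thm:seifert.}. So I would fix $b\geq 2$ and assume, for contradiction, that $M$ also carried a semi-regular Sasakian structure. By Theorem \ref{thm:seifert.} together with the definition of semi-regularity, $M$ would then be the total space of a Seifert bundle $\pi:M\to X'$ over a \emph{smooth} projective K\"ahler surface $X'$ (so that the singular set $P$ is empty) whose branch divisor consists of smooth complex curves $D'_j$ with multiplicities $m'_j>1$. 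The whole point is to recover the topology of this hypothetical $X'$ from $H_2(M,\ZZ)$ and then contradict the non-existence of the resulting configuration of curves on a smooth surface.

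First I would read off the base from the homology formula of Theorem \ref{thm:H1=0}. Comparing $H_2(M,\ZZ)=\ZZ^{k'}\oplus\bigoplus_j\ZZ_{m'_j}^{2g'_j}$ with $k'+1=b_2(X')$ against the explicit value (\ref{eqn:k=k}), namely $H_2(M,\ZZ)=\ZZ^{b-1}\oplus\bigoplus_{i=1}^{b}\ZZ_{p^i}^{2}$, forces $b_2(X')=b$. Since the torsion of $H_2(M)$ is a $p$-group, every multiplicity attached to a positive-genus curve must be a power of $p$; matching the $p^i$-primary part $\ZZ_{p^i}^2$ then yields $\sum_{j:\,m'_j=p^i}g'_j=1$ for each $i=1,\dots,b$, so there are exactly $b$ positive-genus branch curves, each of genus $1$, carrying the multiplicities $p,p^2,\dots,p^b$. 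Reorder them as $D'_1,\dots,D'_b$ with $m'_i=p^i$. Because these multiplicities are pairwise non-coprime, the cyclic-orbifold condition underlying Proposition \ref{prop:orb-constr} forces $D'_1,\dots,D'_b$ to be pairwise \emph{disjoint}. Finally $b_1(X')=0$, since condition (1) of Theorem \ref{thm:H1=0} gives $H_1(X',\ZZ)=0$.

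Next I would show that these $b$ disjoint genus-$1$ curves span $H_2(X',\QQ)$, and here condition (2) of Theorem \ref{thm:H1=0} is the clean tool: the surjection $H^2(X',\ZZ)\to\bigoplus_j H^2(D'_j,\ZZ_{p^{j}})$ becomes, after reduction mod $p$, the map $\alpha\mapsto(\alpha\cdot[D'_j]\bmod p)_j$, whose surjectivity onto $\mathbb{F}_p^{\,b}$ makes the $b$ functionals $\alpha\mapsto\alpha\cdot[D'_j]$ independent over $\mathbb{F}_p$, hence forces the classes $[D'_j]$ to be linearly independent over $\QQ$; as $b_2(X')=b$ they form a $\QQ$-basis. (This also rules out the potential degenerate case of two disjoint genus-$1$ curves having proportional classes.) Thus $X'$ is a smooth projective surface with $b_1=0$ and $b_2=b>1$ carrying $b$ disjoint smooth complex curves of genus $1$ spanning the rational homology — precisely the configuration forbidden by Conjecture \ref{conj}.

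The final step, which I expect to be the main obstacle, is to rule this configuration out on a smooth surface: this is exactly the relevant (all genera equal to $1$, spanning) instance of Conjecture \ref{conj}, which is among the cases established in \cite{CMRV,MRT}. Concretely, spanning by algebraic classes forces $p_g=q=0$; adjunction then gives $K\cdot D'_j=-D'_j{}^2$ and $K=-\sum_j[D'_j]$, so $-K$ is effective, $\kappa(X')=-\infty$ and $X'$ is rational with $K^2=\sum_jD'_j{}^2=10-b$, while the Hodge index theorem allows at most one of the disjoint $D'_j$ to have positive self-intersection. Analysing this through the classification of rational surfaces produces the contradiction (for instance, when $b=2$ the surface $X'$ is a Hirzebruch surface, on which the only irreducible genus-$1$ class is $-K$, which cannot split as a sum of two disjoint elliptic curves). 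This contradicts the assumed semi-regular structure, so $M$ is quasi-regular Sasakian but admits no semi-regular Sasakian structure. The genuinely delicate points are the torsion bookkeeping that pins every genus to $1$ and forces disjointness, and the closing algebro-geometric impossibility, where the full strength of the known cases of Conjecture \ref{conj} is used.
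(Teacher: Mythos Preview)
Your argument is correct and follows the same line as the paper's proof: assume a semi-regular structure, use Theorem~\ref{thm:H1=0} to recover a smooth K\"ahler surface with $b_1=0$, $b_2=b$ carrying $b$ disjoint genus-$1$ complex curves, then invoke the known genus-$1$ case of Conjecture~\ref{conj} (the paper cites \cite[Theorem~29]{CMRV} directly). You are more explicit than the paper in pinning down the genera via the torsion decomposition, in deducing disjointness from the pairwise non-coprimality of the multiplicities $p^i$, and in establishing that the curves span $H_2(X',\QQ)$ via condition~(2) of Theorem~\ref{thm:H1=0}; your restriction to $b\geq 2$ is also necessary and left implicit in the paper.
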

 
 \begin{proof}
 If $M$ admits a Sasakian semi-regular structure, then there is a Seifert fibration $M\to Y$, where
 $Y$ is a smooth K\"ahler surface. By Theorem \ref{thm:H1=0}, $b_1(Y)=0$, $b_2(Y)=k+1=b$, and the ramification locus 
 contains a collection of $b$ disjoint complex curves $D_i$  of genus $g_i=1$. By \cite[Theorem 29]{CMRV},
 this is impossible (put differently, the Conjecture \ref{conj} holds in the case of curves of genus $1$). 
 \end{proof}

We end up this section by extending the result of Theorem \ref{thm:g=1} to higher genus.

\begin{theorem} \label{thm:g>1}
Take $d\geq 3$ and $g=\frac{(d-1)(d-2)}{2}$. 
Then, for any $b\geq1$, there is a Kähler cyclic orbifold $X$ with $b_1=0$, $b_2=b$ and $b$ disjoint curves of genus $g$.
In addition, $X$ can be constructed in such a way that $\pi_1^\orb(X)=1$.
\end{theorem}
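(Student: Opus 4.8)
The plan is to repeat the construction of Theorem \ref{thm:g=1} with cubics replaced by plane curves of degree $d$, whose smooth members have genus $g=\frac{(d-1)(d-2)}{2}$. The input I need is a pencil of degree-$d$ curves all meeting at a single point with full intersection $d^2$. I would take the explicit pencil
$$
 C_t=\{X^d-YZ^{d-1}+t\,Y^d=0\}\subset \CP^2, \qquad p=[0:0:1],\quad L=\{Y=0\},
$$
and check with the Jacobian criterion that $C_t$ is smooth for every $t\neq 0,\infty$. In the affine chart $Z=1$ one has $C_t\colon y=x^d+t\,x^{d^2}+\cdots$, so $L$ is tangent to $C_t$ at $p$ to order $d$; since $C_t-C_{t'}$ is proportional to $Y^d$, two distinct smooth members meet only at $p$, with $C_t\cdot C_{t'}=d^2[p]$ by B\'ezout. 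Choosing $b$ parameters $t_1,\dots,t_b\neq0,\infty$ produces smooth curves $C_1,\dots,C_b$ of genus $g$ and self-intersection $d^2$, pairwise meeting only at $p$ with multiplicity $d^2$.

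Next I would resolve the base locus exactly as for $d=3$. Locally the base scheme at $p$ is $(x^{d^2},\,y-x^d-\cdots)$, a curvilinear scheme of length $d^2$ supported on the smooth branch $y=x^d+\cdots$; blowing up the associated chain of $d^2$ free infinitely near points yields a linear chain $E_1,\dots,E_{d^2}$ of exceptional curves with self-intersections $-2,\dots,-2,-1$ (the same local computation as in the proof of Theorem \ref{thm:g=1} shows each $E_j$ with $j<d^2$ is blown up exactly once more, while $E_{d^2}$ is not). The strict transforms $\tilde C_i$ then become disjoint, with $\tilde C_i^2=0$, each meeting $E_{d^2}$ transversally at one point $x_i$. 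Blowing up $x_2,\dots,x_b$ turns $E_{d^2}$ into a $(-b)$-curve $\tilde E_{d^2}$ and makes $D_i^2=-1$ for $i\geq2$. Finally I would contract the chain of $(-2)$-curves $E_1,\dots,E_{d^2-1}$ extended by $\tilde E_{d^2}$: since
$$
 [b,2,\stackrel{(d^2-1)}{\ldots},2]=\frac{d^2(b-1)+1}{d^2},
$$
Lemma \ref{lem:Hirz-Jung} produces a cyclic singularity modelled on $\CC^2/\ZZ_m$ with $m=d^2(b-1)+1$. Counting $1+d^2+(b-1)$ blow-ups and $d^2$ contractions gives $b_1(X)=0$ and $b_2(X)=b$, and the push-downs $D_1,\dots,D_b$ are $b$ disjoint curves of genus $g$, only $D_1$ passing through the singular point.

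For the final assertion I would adapt Proposition \ref{prop:pi1orb}. The key observation is that the special member $C_0=\{X^d=YZ^{d-1}\}$ is rational: it is parametrised by $[s^{d-1}:1:s^{d}]$ and has a single unibranch singularity at $s_0=[0:1:0]$, of type $x^d=z^{d-1}$, whose delta-invariant is exactly $\frac{(d-1)(d-2)}{2}=g$ (for $d=3$ this is the cuspidal cubic used in Proposition \ref{prop:pi1orb}). Since $p=[0:0:1]\neq s_0$, as $t\to0$ the smooth curve $C_t$ degenerates to $C_0$ with all $2g$ of its vanishing cycles concentrated near $s_0$, away from $p$. Taking the $C_i$ close to $C_0$ and $p$ away from $s_0$, these vanishing cycles bound thimbles inside $U(\cup C_i)$, so the $2g$ generators of $\pi_1(C_i)$ contract there. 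The remainder of the argument of Proposition \ref{prop:pi1orb} then applies word for word: $\pi_1(X-\cup D_i)$ is generated by a loop around $D_1$ and a loop around the singular point $q$, both of which are killed, giving $\pi_1^{\orb}(X)=1$.

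The main obstacle is this last step. In the genus-$1$ case only the two generators of $\pi_1(C_i)$ needed to be killed, by a single cuspidal degeneration; for arbitrary $g$ one must contract all $2g$ generators simultaneously, which is exactly why the rational degeneration $C_0$—concentrating every vanishing cycle at one singular point disjoint from the base point $p$—is the right device, and why its delta-invariant must equal $g$. The remaining technical point, the self-intersection pattern $-2,\dots,-2,-1$ of the exceptional chain, is routine once one notes that the base cluster is curvilinear, precisely as in the case $d=3$.
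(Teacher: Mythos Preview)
Your proposal is correct and follows essentially the same route as the paper: the explicit pencil you write down is, after the coordinate relabeling $(X,Y,Z)\leftrightarrow(x_1,x_2,x_0)$, exactly the pencil $x_0^{d-1}x_2-x_1^d+\lambda x_2^d=0$ used there, and the blow-up/contraction procedure is identical. For $\pi_1^{\orb}(X)=1$ the paper invokes vanishing cycles of ``a suitable Lefschetz fibration in which $C_{\lambda_0}$ is a fiber'' to contract the generators of $\pi_1(C_i)$, while you make this concrete by degenerating inside the pencil to the rational member $C_0$ (with its single unibranch singularity of $\delta$-invariant $g$ away from $p$), directly generalising the cuspidal-cubic trick of Proposition~\ref{prop:pi1orb}; this is the same idea, carried out more explicitly.
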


\begin{proof}
Consider inside the projective plane $\CP^2$, the family of curves, $C_\lambda$, $\lambda\in\CC$, given by:
 $$
 C_\lambda\equiv\left\{[x_0,x_1,x_2]\in\CP^2\, | \, F_\lambda(x_0,x_1,x_2)=x_0^{d-1}x_2-x_1^d+\lambda x_2^d=0\right\} .
 $$
These are all curves of degree $d$ and, therefore, they have genus $g=\frac{(d-1)(d-2)}{2}$. Now note that 
for $\lambda\neq 0$, $C_\lambda$ is smooth. Indeed, suppose $C_\lambda$ is not smooth at some point $[x_0,x_1,x_2]$. 
Then $0=\frac{\partial F_\lambda}{\partial x_1}=-dx_1^{d-1}$, from where $x_1=0$. Also $0=\frac{\partial F_\lambda}{\partial x_0}=(d-1)x_0^{d-2}x_2$
from where either $x_0=0$ or $x_2=0$. Finally, $0=\frac{\partial F_\lambda}{\partial x_2}=x_0^{d-1}+d\lambda x_2^{d-1}$.  Then
in any case, $x_0=0, x_2=0$.

Now let us see that the curves $C_\lambda$, $\lambda\neq 0$, intersect at a single point with multiplicity $d^2$.
Take $\lambda_1\neq\lambda_2$ and $p=[x_0,x_1,x_2] \in C_{\lambda_1}\cap C_{\lambda_2}$. Then
 \begin{align*}
 &x_0^{d-1}x_2-x_1^d+\lambda_1x_2^d=0,\\
 &x_0^{d-1}x_2-x_1^d+\lambda_2x_2^d=0.
 \end{align*}
From here, $\lambda_1x_2^d=\lambda_2x_2^d$, which implies $x_2=0$ and subsequently $x_1=0$. 
Therefore $p=[1,0,0]$, and hence
$C_{\lambda_1}\cdot C_{\lambda_2}=d^2p$.

Thus, selecting $b$ curves in the family $\{C_\lambda\}_{\lambda\neq 0}$, we get $b$ smooth curves $C_1,\ldots, C_b$ 
of genus $g$, each pair of them intersecting at the point $p_0=[1,0,0]$ with multiplicity $d^2$.
Now the same argument as that in the proof of Theorem \ref{thm:g=1} produces the 
Kähler cyclic orbifold $X$ with $b_1=0$, $b_2=b$ and $b$ disjoint curves $D_1,\ldots, D_b$ of genus $g$.

The proof of Proposition \ref{prop:pi1orb} works also in this case. We only need to choose the curves $C_\lambda$ close
to each other (with numbers $\lambda$ very close together), so that the homotopy classes on each $C_\lambda$ can
be pushed to $\bd U(C_{\lambda_0})$, for a fixed $\lambda_0$. Take a basis $\alpha_1,\beta_1\ldots, \alpha_g,\beta_g$
of $\pi_1(C_{\lambda_0})$. This basis can be given by loops well away from $p_0$. These can be contracted in $\CP^2-C_{\lambda_0}$,
because they can be defined by vanishing cycles in a suitable Lefschetz fibration in which $C_{\lambda_0}$ is a fiber.
\end{proof}

As in Corollary \ref{cor:g=1}, the orbifold in Theorem \ref{thm:g>1} serves to construct a 
simply connected $5$-manifold $M$ admitting a (quasi-regular) Sasakian
structure with 
 \begin{equation}\label{eqn:k=k2}
  H_2(M,\ZZ)= \ZZ^k \oplus \big( \mathop{\oplus}\limits_{i=1}^{k+1} \ZZ_{m_i}^{2g} \big),
  \end{equation}
with $m_i=p^i$ and $k=b-1\geq 0$, and $g=\frac{(d-1)(d-2)}2$ any triangular number. 
This manifold has 
 $\mathbf{t}(M)=b_2(M)+1$ and $\mathbf{c}(M)=g$.

\section{Quasi-regular vs semi-regular} \label{sec:4}
 
Let us give an easy example of a quasi-regular Sasakian Smale-Barden manifold that cannot be
semi-regular K-contact, which improves on Corollary \ref{cor:extra}.
   
\begin{proposition}\label{prop:quasi-vs-semi}
 There is a cyclic K\"ahler orbifold $\bar Y$ with $b_2=1$ and an embedded curve $D$ of genus $g=2$.
\end{proposition}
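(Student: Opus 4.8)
The plan is to leave the world of smooth surfaces. If $X$ were smooth with $b_2=1$ it would have to be $\CP^2$, which contains no smooth curve of genus $2$ (genus $2$ is not of the form $\tfrac{(d-1)(d-2)}2$, so it is not realised by a smooth plane curve, in contrast to the genera produced by Theorems~\ref{thm:g=1} and~\ref{thm:g>1}). I would therefore realise $\bar Y$ as a weighted projective plane. Concretely, take $\bar Y=\PP(1,1,3)$, with weighted homogeneous coordinates $[x_0:x_1:y]$ where $\deg x_0=\deg x_1=1$ and $\deg y=3$. This is the contraction of the negative section $s_-$ (with $s_-^2=-3$) of the Hirzebruch surface $\mathbb{F}_3$; by Lemma~\ref{lem:Hirz-Jung}, since $3/1=[3]$, the contraction produces a single cyclic quotient singularity $\CC^2/\ZZ_3$ with action $(z_1,z_2)\mapsto(\xi z_1,\xi z_2)$, $\xi=e^{2\pi i/3}$, at the vertex $v=[0:0:1]$. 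Thus $\bar Y$ is a cyclic K\"ahler orbifold whose isotropy set is the single point $v$, and since $b_2(\mathbb{F}_3)=2$ while we contract one rational curve, $b_2(\bar Y)=1$.

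For the curve I would take $D=\{F=0\}$ with $F=y^2+g_3(x_0,x_1)\,y+h_6(x_0,x_1)$, where $g_3,h_6$ are general homogeneous polynomials of degrees $3$ and $6$, so that $F$ is weighted homogeneous of degree $6$. First note that $D$ avoids the vertex, since $F(0,0,1)=1\neq0$; hence $D$ lies in the smooth locus of $\bar Y$ and is an honestly embedded curve (no orbifold structure on $D$ is needed). The projection $[x_0:x_1:y]\mapsto[x_0:x_1]$ is regular away from $v$ and restricts to a degree-two map $\pi_D:D\to\CP^1$, exhibiting $D$ as the double cover defined by $y^2+g_3y+h_6=0$. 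Its branch locus is the zero set of the discriminant $g_3^2-4h_6$, a homogeneous polynomial of degree $6$ on $\CP^1$; for general $g_3,h_6$ it has six distinct roots, so $\pi_D$ is branched at exactly $6$ points and $D$ is smooth and irreducible. Riemann--Hurwitz then gives $2g(D)-2=2(2\cdot 0-2)+6=2$, i.e.\ $g(D)=2$, as desired.

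The essential content is really the choice of ambient orbifold; once $\PP(1,1,3)$ is fixed the verifications are routine, the only point to confirm being that the discriminant is squarefree for a generic pencil (equivalently, that the generic member of $|2s_+|$ on $\mathbb{F}_3$ is smooth), which follows from Bertini since $|2s_+|$ is base-point free. I expect no serious technical obstacle: the conceptual obstacle is precisely that genus $2$ cannot be achieved inside a smooth $b_2=1$ surface, which is exactly why a genuinely singular orbifold ambient (a weighted projective plane) is forced here. Finally, since $D$ is disjoint from the contracted curve, one has $D^2=(2s_+)^2=12>0$, so $[D]$ spans $H^2(\bar Y,\QQ)$, consistently with $b_2(\bar Y)=1$.
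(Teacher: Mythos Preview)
Your construction is correct and complete: $\PP(1,1,3)$ is a cyclic K\"ahler orbifold with a single $\tfrac13(1,1)$ point and $b_2=1$, the degree-$6$ hypersurface $\{y^2+g_3y+h_6=0\}$ misses the vertex, and the double-cover / Riemann--Hurwitz computation gives genus $2$. On the resolution $\mathbb{F}_3$ this is exactly the linear system $|2s_+|$, with $D\cdot s_-=0$ and $D^2=12$.

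The paper takes a different but parallel route. It works on $H_2=\mathbb{F}_2$, uses $D\equiv 2\sigma+f$ (again genus $2$, $D^2=12$), and contracts the $(-2)$-section $\sigma_\infty$ to get an $A_1$ point. The essential distinction is that in the paper $D\cdot\sigma_\infty=1$, so the pushed-down curve $\bar D$ \emph{passes through} the singular point as a sing-complex curve in the sense of Definition~\ref{def:sing-curve}, whereas your $D$ lies entirely in the smooth locus. For the bare statement of the proposition both are fine. The paper's choice, however, is tailored to the next step (Lemma~\ref{lem:piorb-ot}): there the transverse intersection $D\cap\sigma_\infty$ produces the relation $\delta^2=\gamma$ between the loops around $S$ and $D$, which is what kills $\pi_1^{\orb}(\bar Y)$ once $\gcd(m,6)=1$. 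With your $\PP(1,1,3)$ the corresponding computation would read differently (the loop around the order-$3$ point is no longer tied to $\gamma$ via an incidence), so if you continue to Theorem~\ref{thm:quasi-vs-semi} you would need to redo that lemma rather than quote it verbatim.
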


\begin{proof}
Let $Y=H_2$ be the Hirzebruch surface with invariant $n=2$, that
is $Y=\PP(\cO_{\CP^1} \oplus \cO_{\CP^1}(2))$, which is a $\CP^1$-bundle over $\CP^1$. The zero section 
$\s$ has $\s^2=2$. Let $f$ be the fiber. Therefore the section at infinity $\s_\infty\equiv \s-2f$
has $\s_\infty^2=-2$. The canonical class is   $K_Y=-2\s$ and the ample cone is generated by $\la \s , f\ra$. 

We take a curve $D\equiv 2\s+f$. This has
genus $2$ and $D^2=12$. It can be taken to be smooth. Let us give explicit equations.
In the affine part $X^o=X-\s_\infty$, which is the total space of the line bundle $\cO_{\CP^1}(2)$ over $\CP^1$,
we have a tautological coordinate $y$. Let $x$ be the affine coordinate of the basis. Then
we take the equation
 $$
  y^2x +a(x)y+b(x)=0,
  $$
where $a\in H^0(\CP^1, \cO(3))$, $b\in H^0(\CP^1, \cO(5))$. It intersects the section $\s_\infty$ at one
point $x=0$, $y=\infty$, and every fiber at two points. The condition to be smooth in the affine
part is that the discriminant $\Delta(x)=a(x)^2-4b(x)x$ has no double roots. At the point at infinity, we
can take the coordinate $1/y$, to see that the curve is smooth there.

Now we contract $\s_\infty$ and we get a cyclic orbifold $\bar Y$ with a point of order $2$. The 
projected curve $\bar D$ becomes
a genus $2$ curve going through the singular point.
\end{proof}

\begin{lemma}  \label{lem:piorb-ot}
The orbifold fundamental group of the manifold in Proposition \ref{prop:quasi-vs-semi} is trivial
if the isotropy coefficient of $\bar D$ is an integer $m$ with $\gcd(m,6)=1$.
\end{lemma}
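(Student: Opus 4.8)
The plan is to compute $\pi_1^{\orb}(\bar Y)$ directly from a presentation built out of the geometry of the Hirzebruch surface $Y=H_2$ together with the contraction of the $(-2)$-curve $\s_\infty$. First I would set up the complement $\bar Y - (U(\bar D)\cup \{q\})$, where $q=\varpi(\s_\infty)$ is the singular point of order $2$, and identify it with the corresponding complement in $Y$ away from $\bar D$ and $\s_\infty$. The key input is that the original surface $Y$ is a $\CP^1$-bundle over $\CP^1$, so $\pi_1(Y)=1$, and the ambient fundamental group of the complement of $\bar D$ is generated by a meridian $\gamma$ around $\bar D$ and a loop $x$ encircling the singular point $q$.

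The main steps, in order, would be: (i) use the Seifert presentation of $\pi_1(\bd U(\bar D))$ exactly as in the proof of Proposition \ref{prop:pi1orb}, writing $\pi_1^{\orb}(\bar D)=\la \alpha_1,\beta_1,\alpha_2,\beta_2, x \mid [\alpha_1,\beta_1][\alpha_2,\beta_2]\,x = 1,\ x^2=1\ra$ where $x$ is a generator of $\pi_1(\CC^2/\ZZ_2 - \{q\})\cong\ZZ_2$ and the $\alpha_i,\beta_i$ are the genus-$2$ handle loops; (ii) compute the self-intersection $\bar D^2$ upstairs from $D^2=12$ and the contraction of $\s_\infty$ with $D\cdot\s_\infty$, so as to read off the central relation $x^2=\gamma^{\,\bar D^2}$-type twisting in $\pi_1(\bd U(\bar D))$; (iii) argue, as before, that the handle loops $\alpha_i,\beta_i$ contract in the ambient complement, which forces $x$ and then the meridian $\gamma$ to be trivial, giving $\pi_1(\bar Y - U(\bar D))=1$; (iv) finally impose the orbifold relation $\gamma^{\,m}=1$ coming from the isotropy coefficient $m$ of $\bar D$ and conclude $\pi_1^{\orb}(\bar Y)=1$.

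The arithmetic hypothesis $\gcd(m,6)=1$ is where the care is needed, and I expect this to be the main obstacle. The contraction of the $(-2)$-curve introduces $\ZZ_2$-torsion (the order of the singular point $q$), and the genus-$2$ curve $\bar D$ passing through $q$ couples the meridian $\gamma$, the local generator $x$, and the self-intersection data through relations of the form $x^2=1$ and $x^d=\gamma$ (with $d=2$ here). The number $6=2\cdot 3$ presumably arises because one must kill both the order-$2$ contribution from the singular point and an order-$3$ contribution coming from $\bar D^2$ or from the intersection pattern of $D$ with the fiber class; requiring $\gcd(m,6)=1$ guarantees that $m$ is invertible modulo these torsion orders, so that the relation $\gamma^m=1$ together with $\gamma^{(\text{torsion order})}=1$ forces $\gamma=1$ by a Bézout argument. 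The hard part will be pinning down exactly which relations in the presentation carry the factors $2$ and $3$, i.e.\ verifying that the abelianized local data really has order dividing $6$, rather than some other number; once that is checked, triviality of $\pi_1^{\orb}(\bar Y)$ follows formally from coprimality.
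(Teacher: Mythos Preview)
Your template follows Proposition~\ref{prop:pi1orb}, but the paper argues differently: it works upstairs in $Y=H_2$ with \emph{both} curves $D$ and $S=\s_\infty$, using $\pi_1(\bar Y-\bar D)=\pi_1(Y-(D\cup S))$, rather than downstairs in $\bar Y$ with a Seifert presentation of $\bd U(\bar D)$.

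Step (iii) of your plan is where the gap lies. You claim the handle loops $\alpha_i,\beta_i$ contract in the ambient complement, forcing $\pi_1(\bar Y-U(\bar D))=1$; but this is false --- the complement has nontrivial fundamental group. In Proposition~\ref{prop:pi1orb} that contraction was engineered by choosing the cubics close to a cuspidal member of the pencil; there is no analogous device for a genus-$2$ curve on $H_2$, and your step (ii) computation of $\bar D^2$ (which comes out to $25/2$) will not by itself produce the factors you are hunting for. You sense this tension in your last paragraph but do not resolve it.

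The paper's key move, absent from your outline, is that the meridians $\gamma$ (around $D$) and $\delta$ (around $S$) \emph{commute}, because $D$ and $S$ meet transversally at a single point. Since $\pi_1(Y)=1$, the loops $\alpha_i,\beta_i$ can be written as words in $\gamma,\delta$; hence $\pi_1(Y-(D\cup S))$ is generated by two commuting elements and is abelian. The relation $\gamma^{12}=[\alpha_1,\beta_1][\alpha_2,\beta_2]$ from $D^2=12$ then collapses to $\gamma^{12}=1$, and $S^2=-2$ contributes $\delta^2=1$. Passing to the orbifold imposes $\delta^2=\gamma$ and $\gamma^m=1$, and the coprimality condition is read off from this finite abelian presentation. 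So the factor $2$ comes from $|S^2|=2$ and the factor $3$ from $3\mid 12=D^2$, not from $\bar D^2$ or from the fiber class as you speculated.
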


\begin{proof}
Consider  the curves $D$ and $S=\s_\infty$  in $Y=H_2$. Let $\g,\d$ be the loops around $D$ and
$S$, and let $\alpha_1,\beta_1,\alpha_2,\beta_2$ the standard generators of $\pi_1(D)$.
Then $\gamma$ and $\delta$ commute since $S,D$ intersect transversally at one point. Now
in $\bd U(D)$, we have $\gamma^{12}=[\alpha_1,\beta_1]\, [\alpha_2,\beta_2]$,
using that $D^2=12$, and $\gamma$ is central. Also $\delta^2=1$ in $\bd U(S)$.
The loops $\alpha_i,\beta_i$ contract in $Y$, since it is simply-connected, hence it can be 
written in terms of $\g,\d$ in $\pi_1(Y-(D\cup S))$. Therefore $\pi_1(Y-(D\cup S))=\pi_1(\bar Y-\bar D)$ is
generated by commuting loops $\g,\d$ with $\g^{12}=1$, $\d^2=1$.

The orbifold fundamental group $\pi_1^{\orb}(\bar Y)$ is a quotient of such group, by imposing
the conditions $\d^2=\g$, $\g^{m}=1$, where $m$ is the isotropy coefficient of $\bar D$. 
This implies the result.
\end{proof}

\begin{theorem}\label{thm:quasi-vs-semi}
 There is a quasi-regular Sasakian Smale-Barden manifold, which is not semi-regular K-contact.
\end{theorem}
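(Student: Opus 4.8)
The plan is to realize the desired manifold $M$ as a Seifert bundle over the orbifold $\bar Y$ of Proposition \ref{prop:quasi-vs-semi}, and then to rule out semi-regular K-contact structures by an adjunction computation on the hypothetical smooth base.

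First I would fix a prime $p>3$ and assign to the genus-$2$ curve $\bar D\subset\bar Y$ the isotropy coefficient $m=p$ via Proposition \ref{prop:orb-constr}. Since $\gcd(m,6)=1$, Lemma \ref{lem:piorb-ot} gives $\pi_1^{\orb}(\bar Y)=1$. Theorem \ref{thm:constr-tool} then produces a Seifert bundle $\pi:M\to\bar Y$ whose orbifold first Chern class is an orbifold K\"ahler form, so by Theorem \ref{thm:seifert.} the total space $M$ carries a quasi-regular Sasakian structure. To see that $M$ is a Smale--Barden manifold I would check the three conditions of Theorem \ref{thm:H1=0}: $H_1(\bar Y,\ZZ)=0$ (the underlying space is a contraction of a Hirzebruch surface, hence simply connected), surjectivity of $H^2(\bar Y,\ZZ)\to H^2(\bar D,\ZZ_m)$, and primitivity of $c_1(M/m)$. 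The last point is where the low Betti number bites: because $b_2(\bar Y)=1<3$ I cannot invoke Theorem \ref{thm:constr-tool}(2), and instead must arrange primitivity directly through the Seifert invariant of $\bar D$, using that $H^2(\bar Y-P,\ZZ)$ has rank one and that $m$ may be chosen coprime to the relevant intersection numbers. Granting this, $H_1(M,\ZZ)=0$, and the sequence $\ZZ\to\pi_1(M)\to\pi_1^{\orb}(\bar Y)=1$ forces $\pi_1(M)=0$. As $k+1=b_2(\bar Y)=1$ and $\bar D$ is the unique isotropy surface, Theorem \ref{thm:H1=0} yields
$$
H_2(M,\ZZ)=\ZZ_p^{4}.
$$

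Next I would argue by contradiction. A semi-regular K-contact structure would give a Seifert bundle $M\to Y'$ over a \emph{smooth} symplectic $4$-manifold $Y'$ with symplectic isotropy surfaces $D_i'$ of coefficients $m_i'$ and genera $g_i'$. Theorem \ref{thm:H1=0} then writes $H_2(M,\ZZ)=\ZZ^{k}\oplus(\oplus_i\ZZ_{m_i'}^{2g_i'})$ with $k+1=b_2(Y')$ and $H_1(Y',\ZZ)=0$. Matching this with $\ZZ_p^{4}$ forces $k=0$, hence $b_2(Y')=1$ and $b_1(Y')=0$; moreover each positive-genus surface must carry coefficient exactly $p$ (any other prime, or $p^2$, would create torsion absent from $\ZZ_p^4$), so these surfaces are pairwise disjoint with $\sum_i g_i'=2$.

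The heart of the argument, and the step I expect to be the main obstacle, is showing that no such surface configuration can live on $Y'$. Being symplectic with $b_2=1$ gives $b_2^+=1$, $b_2^-=0$, so the intersection form on $H_2(Y',\ZZ)/\mathrm{tors}=\ZZ h$ is $\langle1\rangle$, and with $\chi=3$, $\sigma=1$ the identity $c_1^2=2\chi+3\sigma=9$ forces $c_1\equiv\pm3h$ modulo torsion. Fixing a compatible almost complex structure makes every connected symplectic surface $\Sigma$ into a $J$-holomorphic curve, for which adjunction is an \emph{equality}:
$$
2g(\Sigma)-2=[\Sigma]^2-c_1\cdot[\Sigma].
$$
Writing $[\Sigma]=a\,h$ modulo torsion, with $a>0$ since $[\omega]\cdot[\Sigma]>0$, this becomes $2g-2=a^2\mp3a$. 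Genus $2$ would demand $a(a\mp3)=2$, which has no positive integer solution, so a single genus-$2$ surface is impossible; and genus $1$ forces $a=3$ (with $c_1\equiv3h$), so two disjoint genus-$1$ surfaces would both lie in the class $3h$, whose self-intersection $9$ precludes disjointness. Either way the only partitions of $\sum_i g_i'=2$ are excluded, contradicting the existence of the semi-regular K-contact structure. Hence $M$ is quasi-regular Sasakian but not semi-regular K-contact.
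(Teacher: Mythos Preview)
Your construction of $M$ follows the paper's exactly: Seifert bundle over $\bar Y$ with isotropy coefficient $m$ coprime to $6$ on $\bar D$, giving $H_2(M,\ZZ)=\ZZ_m^4$ and $\pi_1(M)=1$. The one place you hedge is primitivity of $c_1(M/m)$; the paper dispatches this concretely by observing that with $b_1=1$ one has $c_1(M/m)=[D]$, and $[D]=2\sigma+f$ pairs to $1$ against $\sigma-2f\in H_2(\bar Y-P,\ZZ)$, so it is primitive. You should record that pairing rather than leave it as an intention.

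Where you genuinely diverge is in the obstruction. The paper simply invokes \cite[Proposition~17]{MT}: a semi-regular K-contact rational homology $5$-sphere with $H_2=\ZZ_m^{2g}$ forces $g$ to be a triangular number, and $g=2$ is not. Your argument instead reproves the needed special case from scratch: you read off $b_2(Y')=1$, $H_1(Y')=0$, hence $H_2(Y',\ZZ)=\ZZ h$ with $\langle 1\rangle$ form and $c_1=\pm 3h$, and then use adjunction on embedded symplectic surfaces to exclude both a single genus-$2$ surface (no integer solution of $a^2\mp 3a=2$) and two disjoint genus-$1$ surfaces (both forced into $3h$, intersection $9$). This is correct and self-contained, and it makes the paper independent of the external reference at the cost of a short extra computation; the paper's route is shorter but relies on the cited classification. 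Either way the conclusion stands.
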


\begin{proof}
Apply Theorem \ref{thm:constr-tool}. Take an integer $m=m_1$ with $\gcd(m,6)=1$ and
local invariant $b_1=1$. Then use the Seifert bundle $M\to \bar Y$ with 
$c_1(M)=\frac{b_1}{m_1} [D_1]=\frac1m [D_1]$. This is an orbifold K\"ahler form, hence $M$ is 
Sasakian. The class $c_1(M/m)=[D_1]$ is primitive in $H^2(X-P,\ZZ)$, since it pairs
with the class $\s-2f \in H_2(X-P,\ZZ)$ giving $1$. Therefore we can apply
Theorem \ref{thm:H1=0} to prove that $H_1(M,\ZZ)=0$ and 
 $$
 H_2(M,\ZZ)=\ZZ^4_{m} \, .
 $$
Using Lemma \ref{lem:piorb-ot}, we prove that $\pi_1(M)=1$ and hence $M$ is a Smale-Barden 
manifold.
The manifold $M$ just constructed is quasi-regular Sasakian. It cannot be semi-regular
K-contact, since in \cite[Proposition 17]{MT} it is proved that this can only happen for 
$g=2$ being a triangular number. But this is not the case.
\end{proof}

\section{Rational symplectic $4$-manifolds}

The purpose of this section is to show that (at least some of) the manifolds constructed in Section \ref{sec:examples}
cannot be semi-regular K-contact.
This gives an example of quasi-regular Sasakian Smale-Barden $M$ that is not K-contact semi-regular
in the case $b_2(M)>0$ (that is, not rational homology spheres).
However, the proof is more technical than that of Theorem \ref{thm:quasi-vs-semi}, since
it uses Gromov-Witten and Seiberg-Witten theory for symplectic $4$-manifolds.

\begin{definition}[{\cite[Definition 2.2]{Li}}]
For a minimal symplectic $4$-manifold $(X,\omega)$, let $K$ be the symplectic canonical class. We 
define the Kodaira dimension as:
$$
 \kappa(X,\omega)=\left\{ \begin{array}{ll} -\infty \qquad & \text{if }K\cdot [\omega]<0 \text{ or } K^2<0, \\
 0& \text{if }K\cdot [\omega]=0 \text{ and } K^2=0, \\
 1& \text{if }K\cdot [\omega]>0 \text{ and } K^2=0, \\
 2& \text{if }K\cdot [\omega]>0 \text{ and } K^2>0.
 \end{array}\right.
 $$
 \end{definition}
 
 \begin{corollary}\label{cor:6}
 If $(X,\omega)$ is a symplectic $4$-manifold with $b_1=0$ and $K\cdot[\omega]<0$ then $X$ is rational (that is, 
 symplectomorphic to a rational algebraic surface).
 \end{corollary}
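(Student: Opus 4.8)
The plan is to reduce to a minimal model and then invoke the Seiberg--Witten classification of symplectic $4$-manifolds of Kodaira dimension $-\infty$. First I would blow down a maximal disjoint collection of symplectic $(-1)$-spheres to obtain a symplectic blow-down $\pi\colon X\to X_{\min}$ onto a minimal symplectic manifold $(X_{\min},\omega_{\min})$, so that $X$ is a symplectic blow-up of $X_{\min}$. Both hypotheses descend to $X_{\min}$: a symplectic blow-up is topologically a connected sum with $\overline{\CP}{}^2$, so $b_1(X_{\min})=b_1(X)=0$; and writing $K_X=\pi^{\ast}K_{\min}+\sum_i E_i$ and $[\omega_X]=\pi^{\ast}[\omega_{\min}]-\sum_i\lambda_i\,[E_i]$ with $\lambda_i>0$ and $E_i^2=-1$, the intersection pairing gives $K_X\cdot[\omega_X]=K_{\min}\cdot[\omega_{\min}]+\sum_i\lambda_i$. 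Hence $K_{\min}\cdot[\omega_{\min}]\le K_X\cdot[\omega_X]<0$, so the defining formula yields $\kappa(X_{\min},\omega_{\min})=-\infty$.

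Next I would apply the structure theorem to $X_{\min}$. By the work of Taubes and Liu on Seiberg--Witten invariants of symplectic $4$-manifolds (synthesised by Li), a minimal symplectic $4$-manifold of Kodaira dimension $-\infty$, equivalently one with $K\cdot[\omega]<0$, is rational or ruled, that is, symplectomorphic to $\CP^2$ or to an $S^2$-bundle over a closed orientable surface; in particular $X_{\min}$ is of this form. This is the substantive input and the main obstacle: it is far from elementary and relies on the full Gromov--Witten/Seiberg--Witten machinery, so it must be imported rather than reproved here, and it accounts for the remark made at the start of this section.

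Finally I would use $b_1=0$ to eliminate the higher-genus case. An $S^2$-bundle over a surface of genus $h$ has $b_1=2h$, so $b_1(X_{\min})=0$ forces $h=0$; thus $X_{\min}$ is $\CP^2$ or an $S^2$-bundle over $S^2$, each of which is rational. Therefore $X$, being a symplectic blow-up of a rational manifold, is itself rational, and by the uniqueness of symplectic structures on rational surfaces (McDuff, Li--Liu) the pair $(X,\omega)$ is, up to symplectomorphism and rescaling, a K\"ahler rational algebraic surface, which is the assertion.
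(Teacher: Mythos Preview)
Your proof is correct and follows essentially the same route as the paper: reduce to a minimal model, check that $b_1=0$ and $K\cdot[\omega]<0$ descend under blow-down (via the same computation $K_X\cdot[\omega_X]=K_{\min}\cdot[\omega_{\min}]+\sum_i\lambda_i$), invoke Li's theorem that minimal with $\kappa=-\infty$ means rational or ruled, and use $b_1=0$ to rule out higher-genus base. The only cosmetic difference is that the paper treats one blow-down at a time and iterates, whereas you blow down a maximal collection at once; your added remark on uniqueness of symplectic forms on rational surfaces is not in the paper's proof but is harmless.
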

 
 \begin{proof}
 By \cite[Theorem 2.4]{Li}, if $(X,\omega)$ is minimal (that is, if it does not contain
an embedded symplectic sphere $S$ with $S^2=-1$, $K\cdot S=-1$), then $\kappa(X,\omega)=-\infty$
if and only if it is rational or ruled. If $b_1=0$ and $X$ is ruled, then it is an $S^2$-bundle over $S^2$,
that is a Hirzebruch surface, which is rational.

If $(X,\omega)$ is non-minimal, then there is a minimal symplectic manifold $(X',\omega')$ and a blow-up
map $\pi:X\to X'$. Suppose that $\pi$ is a single blow-up (the general case is done by repeating
the argument). Note that $b_1(X')=0$. Let $E$ be the exceptional divisor, and $K,K'$ the
canonical classes. Then $K=K'+E$ and $[\omega]=[\omega']-\alpha [E]$, for some $\alpha>0$. 
Then $K'\cdot  [\omega']=K\cdot [\omega]-\alpha < 0$. 
Hence $X'$ is rational, and so $X$ is also rational.
 \end{proof}

\begin{lemma} \label{lem:7}
Let $(X,\omega)$ be a compact symplectic $4$-manifold, let $D$ be a symplectic surface with $[D]^2>0$. 
Then there is a symplectic form $\omega'$ so that $[\omega']=[\omega]+ \lambda [D]$, for any $\lambda>0$.
\end{lemma}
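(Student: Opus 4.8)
The plan is to prove the statement by the technique of \emph{symplectic inflation}. It suffices to construct a closed $2$-form $\Phi$ on $X$, supported in an arbitrarily small tubular neighbourhood $U$ of $D$ and representing the Poincar\'e dual $\mathrm{PD}[D]$, such that $\omega_\lambda:=\omega+\lambda\Phi$ satisfies $\omega_\lambda\wedge\omega_\lambda>0$ for every $\lambda\ge 0$. Since $\Phi$ is closed we then get $[\omega_\lambda]=[\omega]+\lambda\,\mathrm{PD}[D]=[\omega]+\lambda[D]$, and positivity of the top power is equivalent to nondegeneracy for a $2$-form on an oriented $4$-manifold, so each $\omega_\lambda$ is symplectic, as desired.

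First I would normalise $\omega$ near $D$. By the symplectic neighbourhood theorem, a neighbourhood $U$ of $D$ is symplectomorphic to a neighbourhood of the zero section in the normal bundle $\pi:\nu\to D$, on which
$$ \omega=\pi^*\sigma+\tfrac12\,d\!\left(r^2\alpha\right),\qquad \sigma=\omega|_D, $$
where $r$ is the radial fibre coordinate and $\alpha$ is a connection $1$-form on the unit circle bundle with $\int_{S^1}\alpha=1$ and $d\alpha=-\pi^*e$, the form $e$ representing the Euler class of $\nu$ with $\int_D e=[D]^2$. The decisive point, and the only place where the hypothesis enters, is that since $[D]^2>0$ I may choose the connection so that its curvature is a \emph{nonnegative} multiple of the area form, namely $e=c\,\sigma$ with $c=[D]^2/\!\int_D\sigma>0$. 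I would then take a Thom form $\Phi=d(g(r)\alpha)=g'(r)\,dr\wedge\alpha-c\,g(r)\,\pi^*\sigma$, where $g$ is smooth with $g(0)=-1$, $g\equiv 0$ near $\partial U$, $g'(0)=0$, and chosen monotone so that $g\le 0$ and $g'\ge 0$. The conditions $g'(0)=0$, $g(0)=-1$ make $\Phi$ smooth across the zero section (so $\Phi$ is closed, being $d(g\alpha)$ off the zero section and smooth across it) and give fibre integral $-g(0)=1$, so $\Phi$ indeed represents $\mathrm{PD}[D]$.

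It then remains to compute $\omega_\lambda\wedge\omega_\lambda=\omega^2+2\lambda\,\omega\wedge\Phi+\lambda^2\Phi^2$ inside $U$ (outside $U$ one has $\omega_\lambda=\omega$). Writing $\Omega:=\pi^*\sigma$ and using $\Omega^2=0$ together with $e=c\sigma$, every term is a multiple of the smooth volume density $\Omega\wedge dx\wedge dy$ (where $x+iy$ is the fibre coordinate), and I would show that the total coefficient equals
$$ 2\Big[(1-\tfrac12 c r^2)+\lambda\big(h\,(1-\tfrac12 cr^2)-c\,g\big)-\lambda^2 c\,g\,h\Big],\qquad h:=g'/r. $$
With $c>0$, $g\le 0$, $g'\ge 0$ (so $h\ge 0$) and $r$ small enough that $1-\tfrac12 cr^2>0$, each of the three summands is nonnegative and the first is strictly positive; hence the coefficient is positive for all $\lambda\ge 0$, giving $\omega_\lambda\wedge\omega_\lambda>0$ throughout $U$.

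The main obstacle is the sign bookkeeping at the zero section $r=0$: in the adapted frame the natural coefficient of $\omega_\lambda^2$ is written against the singular density $\Omega\wedge dr\wedge\alpha$, which vanishes like $r$ along $D$, so one must pass to the smooth Cartesian fibre coordinates (using $dx\wedge dy=r\,dr\wedge d\theta$) to see that the genuine volume density is strictly positive there. Arranging the three summands above to be simultaneously nonnegative is exactly what forces the two sign choices $e=c\sigma\ge 0$ (possible because $[D]^2>0$) and $g\le 0$, $g'\ge 0$; once these are in place the positivity for all $\lambda\ge 0$ is automatic, and extending $\Phi$ by zero outside $U$ completes the construction.
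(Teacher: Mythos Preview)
Your argument is correct and follows essentially the same symplectic inflation idea as the paper: both construct a compactly supported Thom-type form $d(g(r)\alpha)$ near $D$ and add a positive multiple to $\omega$. The only difference is in the verification of nondegeneracy---the paper introduces a compatible almost complex structure $J$ and checks that the Thom form is $J$-semi-positive (so $\omega+\lambda\Omega$ is automatically $J$-tame), whereas you compute $\omega_\lambda\wedge\omega_\lambda$ explicitly; your direct computation is perhaps more elementary, while the paper's route avoids the bookkeeping at $r=0$.
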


\begin{proof}
By the symplectic tubular neighbourhood theorem, we can assume that a neighbourhood of $D$ modelled in a 
complex manifold, that is $U\subset L$, where $\pi:L\to D$ is a holomorphic line bundle of degree $m= [D]^2>0$.
We take the complex structure $J$ on $L$. The boundary of a unit circle bundle $S(L)$ in $L$ is a Sasakian manifold
with a contact form $\eta$ such that $d\eta=\pi^*(\omega_D)$. Let $r$ be the radial coordinate of $L$ (we have fixed
an hermitian metric). The form 
 $$
 \beta=d((1/2)r^2\eta)=r dr \wedge \eta+ (1/2)r^2 \omega_D
 $$
is a K\"ahler form for the cone, that is K\"ahler for $L$ except that it vanishes over the zero section.
Note that $r^2\eta$ is a well-defined form on $L$, so $\beta$ is exact.

Take a perturbation of $D$ as follows. We take a trivialization of $S(L)$ over all of $D$ but a point $p_0$,
we lift to construct $D'_1$ over $D-\{p_0\}$ at $r=\epsilon$, Then the boundary of $D'_1$ is the fiber $S(L_{p_0})$
with $m$ positive turns. We close it with $D'_2$ which is $m$ copies of $B_\epsilon(L_{p_0})$, and introduce the cycle 
$D'=D'_1\cup D'_2$. Then
 $$
 0=\int_{D'} \beta = \int_{D_1'} (1/2)\epsilon^2 \omega_D + \int_{B_\epsilon(0)} m r dr \wedge d\theta =
 (1/2)\epsilon^2 \mathrm{area}(D) - \epsilon^2 \pi m.
 $$
In particular, we note that $dr\wedge d\theta<0$ in the fiberwise direction. 

Now take a bump function $\rho(r)$ which is non-increasing, $\rho(r)\equiv 1$ for small $r$ and $\rho(r)\equiv 0$
for large $r$. Then take
 $$
 \Omega= d(\rho(r) \eta)= \rho'(r) dr \wedge \eta + \rho \, \omega_D\, .
 $$
As $\rho'(r)\leq 0$, this form is $J$-compatible semi-positive and compactly supported.
The first term is positive since $dr \wedge \eta<0$. Note that this is not exact because $\rho(r)\eta$ 
does not extend over $D$. Now take $\omega+ \lambda \Omega$, for $\lambda>0$, which solves the problem.
\end{proof}

\begin{theorem} \label{thm:8}
If $(X,\omega)$ is a compact symplectic $4$-manifold with $b_1(X)=0$, and $D$ is 
a symplectic surface with $[D]^2>0$ and $K\cdot D<0$, then $X$ is rational. 
\end{theorem}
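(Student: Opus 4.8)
The plan is to reduce Theorem \ref{thm:8} to Corollary \ref{cor:6} by deforming the symplectic form so that the canonical class pairs negatively against it. Since Corollary \ref{cor:6} already disposes of the case $K\cdot[\omega]<0$, it suffices to manufacture from the given data a symplectic form $\omega'$ on the same manifold with $K\cdot[\omega']<0$, and the hypotheses $[D]^2>0$ and $K\cdot D<0$ are exactly what make this possible.

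First I would invoke Lemma \ref{lem:7}: as $D$ is a symplectic surface with $[D]^2>0$, for every $\lambda>0$ there is a symplectic form $\omega'$ with $[\omega']=[\omega]+\lambda[D]$ in $H^2(X,\RR)$. Pairing the canonical class against this class gives
 $$
 K\cdot[\omega']=K\cdot[\omega]+\lambda\,(K\cdot D).
 $$
By hypothesis $K\cdot D<0$, so the right-hand side tends to $-\infty$ as $\lambda\to\infty$; choosing $\lambda$ sufficiently large forces $K\cdot[\omega']<0$, regardless of the sign of $K\cdot[\omega]$.

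The step requiring care, and the one I expect to be the main (if mild) obstacle, is the legitimacy of the displayed computation: it uses that the symplectic canonical class of $\omega'$ is the same class $K$ attached to $\omega$. This holds because the form produced in Lemma \ref{lem:7} is of the shape $\omega'=\omega+\lambda\Omega$ with $\Omega$ a compactly supported, $J$-compatible semi-positive $2$-form, where near $D$ one may take $\omega$ to be $J$-compatible by the symplectic tubular neighbourhood theorem. Consequently $\omega+t\lambda\Omega$ is non-degenerate for every $t\in[0,1]$ (it equals $\omega$ off the support of $\Omega$ and dominates $\omega$ on $J$-complex lines elsewhere), yielding a path of symplectic forms joining $\omega$ to $\omega'$. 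The canonical class, being $-c_1$ of a compatible almost complex structure and hence a deformation invariant along such a path, satisfies $K(X,\omega')=K(X,\omega)=K$.

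Finally, $(X,\omega')$ is a compact symplectic $4$-manifold with $b_1(X)=0$ (a topological invariant, unchanged by the deformation) and $K\cdot[\omega']<0$. Corollary \ref{cor:6} then applies and shows that $(X,\omega')$ is rational, i.e.\ symplectomorphic to a rational algebraic surface. Since $X$ is the same underlying smooth manifold, this proves that $X$ is rational, as claimed.
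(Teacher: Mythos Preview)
Your argument is correct and is essentially the same as the paper's: invoke Lemma~\ref{lem:7} to obtain $[\omega']=[\omega]+\lambda[D]$, note $K\cdot[\omega']=K\cdot[\omega]+\lambda\,K\cdot D<0$ for $\lambda$ large, and conclude by Corollary~\ref{cor:6}. Your additional paragraph justifying that $K(X,\omega')=K(X,\omega)$ via the path $\omega+t\lambda\Omega$ is a point the paper uses implicitly without comment, and your explanation is accurate and a welcome clarification.
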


\begin{proof}
By Lemma \ref{lem:7}, there is a symplectic form $\omega'$ with $[\omega']=[\omega]+ N [D]$, for $N>0$.
Now
 $$
  K\cdot [\omega']=K\cdot [\omega] + N \, K\cdot [D]<0
  $$
  for $N\gg 0$ large enough. Now by Corollary \ref{cor:6}, $(X,\omega')$ must be rational.
\end{proof}

\begin{corollary} \label{cor:9}
Suppose that $X$ is a symplectic $4$-manifold with $b_1(X)=0$, and with disjoint surfaces
$D_i$ of genus $1$ and spanning the homology $H_2(X,\QQ)$. Then $X$ is rational.
\end{corollary}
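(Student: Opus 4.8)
The plan is to reduce Corollary \ref{cor:9} to Theorem \ref{thm:8} by showing that the hypotheses of the latter are satisfied for a suitable symplectic surface among the $D_i$. The key point is that if all $D_i$ have genus $1$ and span $H_2(X,\QQ)$, then the canonical class $K$ must pair negatively with at least one of them, and moreover at least one of them must have positive self-intersection. Once we locate such a surface, Theorem \ref{thm:8} applies directly.

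First I would exploit the adjunction formula. For a symplectic surface $D$ of genus $g$ in a symplectic $4$-manifold, one has $2g-2 = [D]^2 + K\cdot [D]$, so for $g=1$ this reads $K\cdot [D_i] = -[D_i]^2$ for every $i$. Thus the condition $K\cdot D_i<0$ is equivalent to $[D_i]^2>0$, and Theorem \ref{thm:8} will apply to $D_i$ precisely when $[D_i]^2>0$. So the entire matter reduces to showing that at least one of the disjoint genus-$1$ surfaces has strictly positive self-intersection.

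The hard part is therefore the positivity claim: I must rule out the possibility that $[D_i]^2\leq 0$ for all $i$. Here I would use that the classes $[D_i]$ span $H_2(X,\QQ)$ together with the fact that the $D_i$ are pairwise disjoint, so $[D_i]\cdot [D_j]=0$ for $i\neq j$. Since $b_1(X)=0$, the intersection form on $H_2(X,\RR)$ is non-degenerate, and by the index theorem (or Poincar\'e duality) it has signature $(b_2^+,b_2^-)$ with $b_2^+\geq 1$ as soon as $X$ carries a symplectic form (because $[\omega]^2>0$). Restricting the intersection form to the span of the $[D_i]$ gives a diagonal form with entries $[D_i]^2$; since these classes span all of $H_2(X,\QQ)$, the number of positive diagonal entries must equal $b_2^+\geq 1$. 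Hence some $[D_{i_0}]^2>0$, as required.

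Finally I would conclude: for this index $i_0$ we have $[D_{i_0}]^2>0$ and, by adjunction, $K\cdot [D_{i_0}]=-[D_{i_0}]^2<0$. Since $b_1(X)=0$ by hypothesis, Theorem \ref{thm:8} yields that $X$ is rational. The one subtlety to verify carefully is that the symplectic intersection-form input (namely $b_2^+\geq 1$ coming from $[\omega]^2>0$, and the diagonalization of the form on the spanning set of disjoint surfaces) is clean; but this is standard linear algebra once disjointness gives orthogonality of the spanning classes, so I expect no serious obstacle beyond bookkeeping.
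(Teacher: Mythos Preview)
Your proposal is correct and follows essentially the same route as the paper: use $b_2^+\geq 1$ (from $[\omega]^2>0$) together with the pairwise orthogonality of the spanning classes $[D_i]$ to find some $D_{i_0}$ with $[D_{i_0}]^2>0$, then invoke adjunction for a torus to get $K\cdot D_{i_0}<0$ and apply Theorem \ref{thm:8}. The only cosmetic point is that the $D_i$ need not be linearly independent, so speaking of ``diagonal entries'' is slightly imprecise; but your actual argument only needs that if all $[D_i]^2\leq 0$ then every class in their span has non-positive square, which is immediate from orthogonality.
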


\begin{proof}
As $X$ is symplectic then $b^+\geq 1$. So there must be one of the surfaces, say $D_1$, with
$D_1^2>0$. As $D_1$ is a torus, then $K\cdot D_1=-D_1^2<0$. Apply now Theorem \ref{thm:8}.
\end{proof}

\begin{proposition} \label{prop:cor:10}
Suppose that $X$ is a symplectic $4$-manifold with $b_1=0$, and with disjoint surfaces
$D_i$ of genus $1$ and spanning the homology $H_2(X,\QQ)$. Then $b_2\neq 2$.
\end{proposition}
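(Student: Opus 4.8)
The plan is to use Corollary \ref{cor:9} to reduce to the case of rational surfaces, and then to derive a contradiction from a direct homological computation about how genus-$1$ curves can span the homology of a rational surface with $b_2=2$. First I would invoke Corollary \ref{cor:9}: under the hypotheses (with $b_1=0$ and disjoint tori $D_1,\dots,D_b$ spanning $H_2(X,\QQ)$), the manifold $X$ must be a rational symplectic $4$-manifold, hence symplectomorphic to a rational algebraic surface. In the case $b_2=2$, a rational surface is either $\CP^2$ blown up once (so $X\cong \CP^2\#\overline{\CP}{}^2$) or a Hirzebruch surface $H_n=\PP(\cO\oplus\cO(n))$; in all these cases the intersection form on $H_2(X,\ZZ)$ is the standard indefinite rank-$2$ unimodular form, of signature $(1,1)$.

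The core of the argument is then a constraint-counting on the two disjoint tori $D_1,D_2$ spanning $H_2(X,\QQ)\cong\QQ^2$. Since they are disjoint, $D_1\cdot D_2=0$. Since each is a torus, the adjunction formula gives $K\cdot D_i=-D_i^2$ (as used in the proof of Corollary \ref{cor:9}). Because $X$ is symplectic with $b^+=1$, the form has signature $(1,1)$, so the Gram matrix of $D_1,D_2$ with respect to the intersection form is
 $$
 \begin{pmatrix} D_1^2 & 0 \\ 0 & D_2^2 \end{pmatrix},
 $$
and since $D_1,D_2$ span $H_2(X,\QQ)$ this matrix must be nondegenerate with signature $(1,1)$. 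Hence, after relabelling, $D_1^2>0$ and $D_2^2<0$. I would now play the adjunction/symplectic genus constraints against each other: a symplectic surface of genus $1$ and negative self-intersection forces $K\cdot D_2=-D_2^2>0$, whereas the surface $D_1$ with $D_1^2>0$ gives $K\cdot D_1<0$. The plan is to show this pair of sign conditions is incompatible with the structure of the canonical class of a rational surface of $b_2=2$, for instance by writing $K$ explicitly in a basis and checking that no indefinite diagonal configuration of two genus-$1$ classes can arise, or by using divisibility/parity of the intersection numbers $D_i^2$ forced by adjunction ($K\cdot D_i+D_i^2=0$ is automatic for tori, but the value $D_i^2$ is further constrained by primitivity of the spanning classes over $\QQ$ together with the precise form of $K$).

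The cleanest route I would pursue is to exploit that $\{D_1,D_2\}$ is a $\QQ$-basis diagonalizing the intersection form, which is very rigid: a unimodular indefinite rank-$2$ form that is diagonalized over $\QQ$ by classes of self-intersection $D_1^2=a>0$ and $D_2^2=-c<0$ must have determinant $-ac$ equal to the determinant of the integral form up to squares, i.e.\ $-ac\equiv -1 \pmod{(\QQ^\times)^2}$, forcing $ac$ to be a perfect square. I would then combine this with the adjunction-genus inequality and the explicit canonical class $K=-2H+E$ (or the Hirzebruch analogue $K=-2\s+(n-2)f$) to pin down the possible values of $a$ and $c$ and show none of them is realizable by an embedded symplectic torus, giving the contradiction. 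The main obstacle I anticipate is the last step: ruling out the finitely many numerically-allowed diagonal configurations, since numerical adjunction alone does not forbid a genus-$1$ class, and I expect to need the symplectic Seiberg--Witten/Taubes input (that $K$ or a suitable $-K$ class is represented, controlling which classes have embedded symplectic representatives) to eliminate the surviving cases, rather than a purely homological count.
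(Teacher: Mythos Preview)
Your setup matches the paper's: invoke Corollary \ref{cor:9} to reduce to a Hirzebruch surface $H_n$, use $D_1\cdot D_2=0$ and adjunction $K\cdot D_i=-D_i^2$. But you stop short of the finishing observation and then reach for Seiberg--Witten/Taubes machinery that is not needed. The missing elementary step is this: since $D_1,D_2$ are a $\QQ$-basis and $D_1\cdot(K+D_1)=0=D_1\cdot D_2$, the class $K+D_1$ is a multiple of $D_2$; symmetrically $K+D_2$ is a multiple of $D_1$; linear independence forces both multiples to be $-1$, so
\[
D_1+D_2+K=0.
\]
This single relation is what makes the problem finite and tractable.

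From here the paper works in the standard basis $\sigma,f$ of $H_2(H_n,\ZZ)$ with $\sigma^2=-n$, $\sigma\cdot f=1$, $f^2=0$, $K=-2\sigma-(n+2)f$. Writing $D_1=a\sigma+bf$ (and swapping so that $a\le 1$), the equation $D_1\cdot D_2=0$ becomes $(a-1)(an-2b)=-2a$; this rules out $a=0,1$, and since $\gcd(a,a-1)=1$ one gets $(a-1)\mid 2$, hence $a=-1$ and $n+2b=1$, so $b\le 0$. The contradiction is then just positivity of the symplectic form: with $[\omega]=x\sigma+yf$ and $x>0$, $y>nx$ (because $\sigma$ and $f$ are symplectic under the identification with the rational surface), one computes $[\omega]\cdot D_1=(nx-y)+bx<0$, contradicting that $D_1$ is symplectic.

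So your square-class/determinant route and the anticipated SW input are unnecessary detours; the identity $D_1+D_2+K=0$ plus the three elementary positivity constraints $[\omega]\cdot D_1>0$, $[\omega]\cdot\sigma>0$, $[\omega]\cdot f>0$ already close the argument.
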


\begin{proof}
Suppose that $b_2=2$. 
By Corollary \ref{cor:9}, $X$ is rational, hence it must be a Hirzebruch surface $H_n$, $n\geq 0$.
Let $D_1,D_2$ be the disjoint symplectic genus $1$ curves. Then
 $$
 D_1\cdot D_2=0, \;\; D_1 \cdot (K+D_1)=0,
 $$
 imply by inspection of the intersection matrix that
 \begin{eqnarray}
 \label{eq:sum=0}
D_1+D_2+K = 0.
\end{eqnarray}

Let $\s,f$ be the basis of $H_2(X,\ZZ)$ given by the special section $\s$ and the fiber $f$ of the fibration, with
$$\s^2=-n,\s\cdot f=1, f^2=0.
$$
Then $K=-2\s -(n+2)f$. Write $D_1=a\s+b f$, with $a,b\in \ZZ$. Interchanging
$D_1$ and $D_2$ if necessary, we can assume $a\leq 1$ by \eqref{eq:sum=0}. 
The equation $D_1\cdot D_2=0$ 
simplifies to
 \begin{eqnarray}
 \label{eq:relation}
(a-1)(an-2b) = -2a.
 \end{eqnarray}
 Thus $a\neq 0,1$, so $a\leq -1$.
 Since \eqref{eq:relation} is an integer relation, we use $\gcd(a,a-1)=1$ to infer that $(a-1)\mid 2$,
 hence $a=-1$. Then \eqref{eq:relation} gives $n+2b=1$, and hence $b\leq 0$.
 Consider the Kähler class $[\omega]=x \s+ y f$ with $x>0, y>nx$ (since the surfaces $\s$ and $f$ 
 are symplectic, hence they pair positively with $[\omega]$). 
But then we find
\[
0<[\omega]\cdot D_1 = \underbrace{(nx-y)}_{<0}+bx.
\]
Since $b<0$ we obtain the required contradiction.
\end{proof}

\begin{remark}
Note that symplectic surfaces in a symplectic $4$-manifold can pair negatively.
Therefore, we cannot assume that $D_1\cdot f\geq 0$ in the proof above,
which would simplify the argument.

Also note that in the complex case, equation \eqref{eq:sum=0} leads to
a contradiction since it would provide an exact sequence
$0 \to \cO(K) \to \cO \to \cO_{D_1} \oplus \cO_{D_2} \to 0$. Using
that $H^0(K)=0$, $H^1(K)=0$, we would get that $H^0(\cO) = 
H^0( \cO_{D_1} )\oplus H^0(\cO_{D_2} )=\CC^2$, which is
not true. However, for almost complex $4$-manifolds there
is no analogue of this cohomology theory.
\end{remark}

Now we complete the proof that the manifold $M$ of Corollary \ref{cor:extra} 
is not semir-regular K-contact for $k=1$.

\begin{theorem}\label{thm:quasi-general}
 If a Smale-Barden manifold $M$ has $H_2(M,\ZZ)=\ZZ \oplus \big({\oplus}_{i=1}^{2}\, \ZZ_{p^i}^2\big)$, then it
 cannot be K-contact semi-regular.
 \end{theorem}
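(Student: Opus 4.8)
The plan is to argue by contradiction: a semi-regular K-contact structure on $M$ would produce a smooth symplectic base to which Proposition \ref{prop:cor:10} applies, and this is what I will contradict. First I would assume $M$ carries a semi-regular K-contact structure. By the K-contact analogue of Theorem \ref{thm:seifert.} (see \cite[Theorems 19 and 21]{MRT}) there is then a Seifert fibration $\pi:M\to Y$ over a cyclic symplectic $4$-orbifold $Y$ whose underlying space is smooth, with isotropy surfaces $D_i$ of multiplicities $m_i>1$ and genera $g_i$. Since $M$ is simply connected we have $H_1(M,\ZZ)=0$, so Theorem \ref{thm:H1=0} applies and yields
$$
H_2(M,\ZZ)=\ZZ^k\oplus\Big(\mathop{\oplus}\limits_i \ZZ_{m_i}^{2g_i}\Big),\qquad k+1=b_2(Y),
$$
together with its conditions (1)--(3). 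Comparing with $H_2(M,\ZZ)=\ZZ\oplus\ZZ_p^2\oplus\ZZ_{p^2}^2$ forces $k=1$, hence $b_2(Y)=2$. Moreover, a $\ZZ_{p^2}$-summand can only be contributed by a surface of multiplicity $p^2$ and a $\ZZ_p$-summand only by one of multiplicity $p$, so the positive-genus isotropy surfaces are exactly two tori: $D_1$ with $(m_1,g_1)=(p,1)$ and $D_2$ with $(m_2,g_2)=(p^2,1)$. Since $Y$ is cyclic, two isotropy surfaces meeting at a point would have coprime multiplicities there; as $\gcd(p,p^2)=p>1$, the tori $D_1,D_2$ are forced to be disjoint.

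The main step, and the one needing care, is to show that $[D_1],[D_2]$ span $H_2(Y,\QQ)$. Because $Y$ is a smooth closed oriented $4$-manifold with $H_1(Y,\ZZ)=0$, the universal coefficient theorem and Poincar\'e duality give $H_2(Y,\ZZ)=\ZZ^2$, torsion-free with unimodular intersection form, so the evaluation pairing $H^2(Y,\ZZ)\times H_2(Y,\ZZ)\to\ZZ$ is perfect and stays perfect after reduction mod $p$. Condition (2) of Theorem \ref{thm:H1=0} gives a surjection $H^2(Y,\ZZ)\to\oplus_i H^2(D_i,\ZZ_{m_i})$; projecting onto the two torus factors produces a surjection $H^2(Y,\ZZ)\to H^2(D_1,\ZZ_p)\oplus H^2(D_2,\ZZ_{p^2})=\ZZ_p\oplus\ZZ_{p^2}$, namely $\alpha\mapsto\big((\alpha\cdot D_1)\bmod p,\,(\alpha\cdot D_2)\bmod p^2\big)$. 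Composing further with $\ZZ_{p^2}\to\ZZ_p$ yields a surjection $H^2(Y,\ZZ)\to\ZZ_p\oplus\ZZ_p$, which through the perfect mod-$p$ pairing means precisely that the reductions of $[D_1]$ and $[D_2]$ are linearly independent in $H_2(Y,\ZZ_p)$. A clearing-of-denominators argument (writing an alleged relation with coprime integer coefficients and reducing mod $p$) then upgrades this to $\QQ$-linear independence of $[D_1],[D_2]$ in $H_2(Y,\QQ)=\QQ^2$, so they form a basis and in particular span.

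Finally, $Y$ is a symplectic $4$-manifold with $b_1(Y)=0$ containing the two disjoint genus-$1$ surfaces $D_1,D_2$ spanning $H_2(Y,\QQ)$, and $b_2(Y)=2$. This directly contradicts Proposition \ref{prop:cor:10}, which rules out $b_2=2$ for exactly such configurations, and the theorem follows. I expect the only delicate point to be the spanning claim; once condition (2) of Theorem \ref{thm:H1=0} is invoked and reduced mod $p$ the remaining argument is routine linear algebra, and the geometric contradiction is then handed off entirely to Proposition \ref{prop:cor:10}.
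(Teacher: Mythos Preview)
Your proof is correct and follows the same route as the paper: assume a semi-regular K-contact structure, extract from Theorem~\ref{thm:H1=0} a smooth symplectic base $Y$ with $b_1=0$, $b_2=2$ and two disjoint symplectic tori spanning $H_2(Y,\QQ)$, and contradict Proposition~\ref{prop:cor:10}. The paper's own argument is terser and simply asserts these properties of $Y$; your version supplies the details (identifying the two tori from the torsion decomposition, deducing disjointness from $\gcd(p,p^2)>1$, and deriving the spanning from condition~(2) via the mod-$p$ pairing), but the strategy is identical.
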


\begin{proof}
 Suppose that $M$ is K-contact semi-regular. Then there is a Seifert fibration
 $\pi:M\to Y$, where $Y$ is a smooth symplectic $4$-manifold with $b_1(Y)=0$, 
 $b_2(Y)=b=k+1=2$, with a collection of disjoint smooth symplectic 
 embedded surfaces $D_i$ of genus $1$, and spanning the homology $H_2(Y,\QQ)$.
 This is impossible by Proposition \ref{prop:cor:10}.
\end{proof}
 
 \begin{remark}
 Note that we have no chance to extend Theorem \ref{thm:quasi-general} to the Smale-Barden manifold 
 of (\ref{eqn:k=k2}) with $k=1$ and $g>1$. Our proof relies heavily on the rationality of symplectic
 $4$-manifolds, proved in Corollary \ref{cor:9}, which hinges on the fact that the surfaces are of genus $1$.
 \end{remark}

 \section{Null Sasakian structures}

\subsection{A description of   null Sasakian structures}

A smooth K3 surface is a simply connected complex surface with trivial canonical class $K_X$. The condition $K_X=0$ is equivalent to the existence of a nowhere vanishing holomorphic 2-form $\omega_X$. To define the orbifold version of this definition, we need orbifold homology $H_i^{\orb}(X)$ and the orbifold canonical class $K_X^{\orb}$ (see \cite[Chapter 4]{BG}).   

\begin{definition} 
An orbifold K3-surface is a cyclic orbifold $X$ such that $H_1^{\orb}(X)=0$ and $K_{X}^{\orb}=0$. 
\end{definition}

\begin{proposition}[{\cite[Proposition 10.2]{K}}] 
Let $\pi: M\rightarrow (X,\Delta=\sum_i(1-{1\over m_i})D_i)$ be a Seifert bundle with $M$ smooth. Assume that $H_1(M,\mathbb{Z})=0$ and that the orbifold $(X,\Delta)$ is a Calabi-Yau orbifold, that is $K_X+\Delta$ is numerically trivial. Then $K_X$ is trivial (and $\Delta=0$).
\end{proposition}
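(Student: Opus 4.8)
The plan is to locate the obstruction on the base orbifold $\cX=(X,\Delta)$ itself rather than on the smooth total space $M$ (or its associated $\CC^\ast$-bundle $Y$), where the branch data is invisible: indeed the ramification formula $K_Y=p^\ast(K_X+\Delta)$ shows $K_Y$ is numerically trivial, but one checks that $Y$ smooth admits a nowhere-vanishing holomorphic volume form even when $\Delta\neq0$ (the fibre direction absorbs the isotropy character), so no contradiction lives upstairs. Writing $K_{\cX}=K_X+\Delta$ for the orbifold canonical sheaf, the heart of the matter is the local isotropy character of $K_{\cX}$ along a branch divisor. I would take a generic point of an isotropy surface $D_i$ and an adapted chart $\CC^2/\ZZ_{m_i}$ in which $\xi=e^{2\pi i/m_i}$ acts by $(z_1,z_2)\mapsto(z_1,\xi z_2)$ and $D_i=\{z_2=0\}$. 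A holomorphic $2$-form $f\,dz_1\wedge dz_2$ is $\ZZ_{m_i}$-invariant precisely when $\xi\,f(z_1,\xi z_2)=f(z_1,z_2)$, which forces $z_2^{m_i-1}\mid f$. Hence every invariant local generator of $K_{\cX}$ vanishes along $D_i$ to order $m_i-1$; equivalently $K_{\cX}$ carries the nontrivial isotropy character $-1\bmod m_i$ along $D_i$ and admits no nowhere-vanishing orbifold section near $D_i$ unless $m_i=1$. Consequently, if I can show that $K_{\cX}$ is trivial as an orbifold line bundle, then no $D_i$ can occur, i.e.\ $\Delta=0$.

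The second step is to upgrade the Calabi--Yau hypothesis (numerical triviality of $K_X+\Delta$) to genuine triviality of the orbifold line bundle $K_{\cX}$, and this is where $M$ smooth and $H_1(M,\ZZ)=0$ enter. By Theorem \ref{thm:H1=0}(1), $H_1(M,\ZZ)=0$ gives $H_1(X,\ZZ)=0$, so the coarse $H^2(X,\ZZ)$ is torsion-free and $\mathrm{Pic}^0(X)=0$; thus the continuous part of a numerically trivial class is killed and only the discrete orbifold torsion (the characters along the $D_i$) could survive. One then uses the smoothness of the Seifert bundle — i.e.\ that the defining orbifold line bundle is faithful at each isotropy stratum — together with $H_1(M,\ZZ)=0$ to exclude such torsion in the orbifold Picard group, so that numerical triviality forces $K_{\cX}\cong\cO_{\cX}$. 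This produces a nowhere-vanishing orbifold holomorphic $2$-form $\omega_X$ trivializing $K_{\cX}$, which directly contradicts the nontrivial character computed above unless every $m_i=1$; therefore $\Delta=0$.

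With $\Delta=0$ the orbifold $X$ has only isolated quotient singularities and $K_{\cX}=K_X$, and $\omega_X$ is a nowhere-vanishing section of the canonical sheaf away from the singular points. Invariance of $\omega_X$ under each local group $\ZZ_d$ acting by $(z_1,z_2)\mapsto(\xi^{e_1}z_1,\xi^{e_2}z_2)$ forces $e_1+e_2\equiv0\bmod d$, so the singularities are Gorenstein (Du Val); hence $\omega_X$ extends over them to a trivialization of $K_X$, giving $K_X$ trivial. The step I expect to be the main obstacle is precisely the upgrade from numerical to honest triviality of the \emph{orbifold} canonical bundle: one must argue on $\cX$ rather than on $M$ or $Y$, and must verify that it is the orbifold Picard group (carrying the character data along the $D_i$), and not merely the coarse N\'eron--Severi group, that is torsion-free under the hypotheses $H_1(M,\ZZ)=0$ and $M$ smooth. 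Once a nowhere-vanishing orbifold $2$-form is secured, the local character computation makes the conclusions $\Delta=0$ and $K_X$ trivial essentially immediate.
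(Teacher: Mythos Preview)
The paper does not give its own proof of this statement; it is quoted verbatim from Koll\'ar \cite[Proposition 10.2]{K} and used as input for the subsequent discussion. So your proposal has to be measured against Koll\'ar's argument rather than anything in the present paper.

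Your plan is the right one and is essentially Koll\'ar's: the local character computation showing that an orbifold trivialization of $K_{\cX}$ forces every $m_i=1$ is correct, as is the final step through Gorenstein (Du Val) singularities. The only substantive gap is the one you explicitly flag --- upgrading numerical triviality of $K_{\cX}$ to triviality as an \emph{orbifold} line bundle --- and your proposal does not actually execute it; the appeal to ``faithfulness of $L$ at each isotropy stratum together with $H_1(M,\ZZ)=0$'' is where the real content sits, and as written it is a restatement of the goal rather than an argument. The clean way to close it is the orbifold Gysin sequence for the Seifert fibration,
\[
H^0_{\orb}(X,\ZZ)\xrightarrow{\ \cup\, c_1(L)\ } H^2_{\orb}(X,\ZZ)\longrightarrow H^2(M,\ZZ),
\]
where $L$ is the orbifold line bundle defining $\pi$ (smoothness of $M$ is what makes the target ordinary cohomology of a manifold). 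Since $H_1(M,\ZZ)=0$ forces $H^2(M,\ZZ)$ to be torsion-free, any torsion class $T\in H^2_{\orb}(X,\ZZ)$ maps to zero, hence $T=k\,c_1(L)$ for some integer $k$; but $c_1(L)$ is a K\"ahler class, so non-torsion, giving $k=0$. Thus $H^2_{\orb}(X,\ZZ)$ is torsion-free, and combined with $\mathrm{Pic}^0(X)=0$ (from $H_1(X,\ZZ)=0$, which you correctly extract from Theorem \ref{thm:H1=0}) this kills every numerically trivial orbifold line bundle. With that in hand, your local character computation immediately gives $\Delta=0$ and then $K_X$ trivial, exactly as you outline.
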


In addition, $M$ is simply connected if and only if $\pi_1^{\orb}(X)=1$
by \cite[Thm.\ 9.1]{K}.

\begin{proposition}[{\cite[Corollary 10.4]{K}}]\label{prop:minres} 
Let $\pi: M\rightarrow (X,\Delta)$ be a 5-dimensional Seifert bundle, $M$ smooth. Assume that $H_1(M,\mathbb{Z})=0$, and that $(X,\Delta)$ is a Calabi-Yau orbifold. Then
\begin{enumerate}
\item the minimal resolution of $X$ is a K3-surface,
\item $M$ is homeomorphic to the connected sum of at most 21 copies of $S^2\times S^3$.
\end{enumerate}
\end{proposition}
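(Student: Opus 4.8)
The plan is to push everything down to the smooth minimal resolution of $X$, identify that resolution as a K3 surface by the Enriques--Kodaira classification, and then read off the homeomorphism type of $M$ from the Smale--Barden classification. First I would apply the preceding proposition (\cite[Proposition 10.2]{K}) verbatim: its hypotheses ($M$ smooth, $H_1(M,\ZZ)=0$, $(X,\Delta)$ Calabi--Yau) are exactly ours, so it yields $\Delta=0$ and that the orbifold canonical class $K_X$ is trivial. Thus $X$ carries no isotropy surfaces, only a finite set of isolated cyclic quotient singularities, and $K_X$ is Cartier and trivial. A cyclic quotient surface singularity is log terminal, and being Gorenstein (as $K_X$ is Cartier) it is therefore \emph{canonical}, i.e.\ a rational double point of type $A_n$ --- in the notation of Lemma \ref{lem:Hirz-Jung} the case $\frac{m}{r}=[2,\stackrel{(n)}{\ldots},2]$. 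The crucial consequence is that the minimal resolution $\rho:\widetilde X\to X$ is crepant: $K_{\widetilde X}=\rho^\ast K_X=0$.

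For part (1) I would classify $\widetilde X$. It satisfies $K_{\widetilde X}=0$ honestly, not merely numerically, and $b_1(\widetilde X)=b_1(X)=0$; here $H_1(X,\ZZ)=0$ is forced by condition (1) of Theorem \ref{thm:H1=0} (since $H_1(M,\ZZ)=0$), and a resolution of rational singularities preserves $b_1$. Among compact complex surfaces with trivial canonical bundle, the vanishing $b_1=0$ excludes abelian and bielliptic surfaces, while the \emph{honest} triviality of $K_{\widetilde X}$ excludes Enriques surfaces (for which $K\neq 0$ although $2K=0$); what remains is exactly the K3 case, proving (1).

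For part (2), since $\Delta=0$ there are no surfaces $D_i$, so the torsion summand in Theorem \ref{thm:H1=0} is empty and $H_2(M,\ZZ)=\ZZ^k$ is free of rank $k=b_2(X)-1$. Because $X$ has rational (quotient) singularities, the exceptional $(-2)$-curves of $\rho$ span a rank-$\mu$ sublattice of $H^2(\widetilde X,\QQ)$, so $b_2(X)=b_2(\widetilde X)-\mu=22-\mu$ and hence $k=21-\mu\le 21$. Finally $M$ is simply connected: using that $\widetilde X$ is a K3 surface (hence simply connected) one shows $\pi_1^{\orb}(X)=1$, whence $\pi_1(M)=1$ by \cite[Thm.\ 9.1]{K}. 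With $\pi_1(M)=1$ and $H_2(M,\ZZ)=\ZZ^k$ torsion-free, Barden's classification leaves only two candidates, the spin manifold $\#_k(S^2\times S^3)$ and its non-spin twist; a computation of $w_2(M)$ (equivalently $i(M)=0$) singles out the former, giving $M\cong\#_k(S^2\times S^3)$ with $k\le 21$.

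The steps I expect to demand the most care are, first, the identification of the singularities as $A_n$ together with the crepancy of $\rho$, since this is precisely what guarantees that we land on a K3 surface rather than on some other surface with numerically trivial canonical class; and, second, the determination of the Barden invariant $i(M)$, which is what distinguishes the standard connected sum $\#_k(S^2\times S^3)$ from the manifold built using the nontrivial $S^3$-bundle over $S^2$.
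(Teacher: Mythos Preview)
The paper does not supply its own proof of this proposition; it is quoted verbatim from Koll\'ar \cite[Corollary~10.4]{K}. So there is nothing to compare against, but let me assess your argument on its own merits.

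Your outline for part~(1) is sound and is essentially how the argument must go: once \cite[Proposition~10.2]{K} forces $\Delta=0$ and $K_X$ trivial, the isolated cyclic quotient singularities are Gorenstein, hence Du~Val of type $A_n$; the minimal resolution is crepant; and the Enriques--Kodaira classification together with $b_1(\widetilde X)=0$ pins down $\widetilde X$ as a K3 surface.

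For part~(2) there is a genuine gap at the step you treat as routine. Your claim that simple connectivity of the resolution $\widetilde X$ yields $\pi_1^{\orb}(X)=1$ is not justified: with $\Delta=0$ one has $\pi_1^{\orb}(X)=\pi_1(X\setminus P)=\pi_1(\widetilde X\setminus E)$, and removing an $A_n$-configuration $E$ from a K3 surface can very well produce fundamental group. The standard example is the quotient of a K3 surface by a symplectic involution: it has eight $A_1$ points, its minimal resolution is again a K3 surface, yet $\pi_1^{\orb}=\ZZ/2$. The hypothesis $H_1(M,\ZZ)=0$ does buy you something---via the homotopy exact sequence $\ZZ\to\pi_1(M)\to\pi_1^{\orb}(X)\to 1$ it forces $\pi_1^{\orb}(X)$ to be \emph{perfect}, which already rules out the Nikulin example. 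But non-trivial finite perfect groups such as $A_5$ act symplectically on K3 surfaces, and since every non-cyclic subgroup of $A_5$ contains two distinct involutions (hence cannot sit inside $\SL(2,\CC)$), all stabilizers are cyclic and the quotient has only $A_n$ singularities. Excluding such $\pi_1^{\orb}(X)$ and the resulting central extensions $\pi_1(M)$ therefore needs a genuinely different argument. This is precisely why the present paper, when it later needs $\pi_1^{\orb}(X)=1$ for a specific K3 orbifold (proof of Theorem~\ref{thm:main-null}), invokes the non-trivial criterion of Shimada--Zhang (Theorem~\ref{thm:shimada}) rather than appealing to simple connectivity of the resolution.

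You flagged the identification of the singularities and the Barden invariant as the delicate steps; in fact those are comparatively mechanical, while the simple connectivity of $M$ is the real crux you have not addressed.
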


\begin{theorem}[{\cite[Theorem 10.3.8]{BG}}] 
If a Smale-Barden manifold $M$ admits a null Sasakian structure, then
\begin{enumerate}
\item any null Sasakian structure is quasi-regular, and, therefore, $M$ admits a structure of a Seifert bundle over an orbifold K3 surface,
\item $2\leq b_2(M)\leq 21$,
\item if $b_2(M)=21,$ the null structure is regular, that is, the Seifert bundle $M\rightarrow X$ is a smooth principal circle bundle over a K3 surface,
\item $M$ is spin,
\item $\pi_1^{\orb}(X)=1$.
\end{enumerate}
\end{theorem}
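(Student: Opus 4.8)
The plan is to establish parts (2)--(5) as essentially formal consequences of Kollár's structure results (\cite[Proposition 10.2]{K} and Proposition \ref{prop:minres}) together with the Seifert machinery already set up, so that the genuine content is concentrated in part (1), and within it the implication that a \emph{null} structure is forced to be quasi-regular. Once that implication is in hand, Theorem \ref{thm:seifert.} presents $M$ as a Seifert bundle $\pi\colon M\to X$ over a cyclic Kähler orbifold with $c_1^{\orb}(X)=0$, i.e.\ with $(X,\Delta)$ Calabi--Yau, and the remaining statements unwind from there.

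For the hard implication in (1), the approach I would take is transverse-geometric. The characteristic foliation $\cF_\xi$ is transversally Kähler, and the nullity condition $c_1(\cF_\xi)=0$ says the transverse Ricci form is exact in basic cohomology; by the foliated (transverse) Calabi--Yau theorem one deforms the transverse Kähler metric, keeping the Reeb field $\xi$ fixed, to a transversally Ricci-flat one, so the transverse holonomy lies in $SU(2)$. Let $T\subset \Aut(M,g)$ be the closure of the Reeb flow, a torus of dimension $k_0\ge 1$; quasi-regularity is exactly $k_0=1$. If $k_0\ge 2$, the $k_0-1$ directions transverse to $\xi$ yield nonzero transverse holomorphic vector fields, i.e.\ nonzero holomorphic sections of $\nu(\cF_\xi)$. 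Since $M$ is a Smale--Barden manifold, $H_1(M,\ZZ)=0$ (and $b_1(M)=0$), which rules out torus-type transverse geometry and forces the transverse Calabi--Yau surface to be of K3 type; as a K3 has no holomorphic vector fields, the transverse ones must vanish, giving $k_0=1$. I expect this to be the main obstacle, since it is where the transverse analysis genuinely enters, through the vanishing of $H^0$ of the tangent sheaf of a K3.

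With quasi-regularity established, $H_1(M,\ZZ)=0$ lets me invoke \cite[Proposition 10.2]{K} to conclude $K_X$ is trivial and $\Delta=0$, and Proposition \ref{prop:minres} to conclude that the minimal resolution $\widetilde X$ is a K3 surface and $M$ is homeomorphic to $\#_{\le 21}(S^2\times S^3)$. Together with $H_1^{\orb}(X)=0$ this identifies $X$ as an orbifold K3 surface, which finishes (1). The upper bound in (2) is immediate from the homeomorphism type. For the lower bound I would use $b_2(M)=b_2(X)-1$ from Theorem \ref{thm:H1=0} together with the fact that $\widetilde X\to X$ contracts a configuration of rational curves spanning a negative definite sublattice of $H^2(\widetilde X,\QQ)$; as the K3 lattice has signature $(3,19)$, this configuration has rank at most $19$, so $b_2(X)=22-(\text{number of exceptional curves})\ge 3$ and hence $b_2(M)\ge 2$.

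Part (3) is the equality case: $b_2(M)=21$ forces the number of exceptional curves to be zero, so $X$ is a smooth K3 and, since $\Delta=0$ and there are no singular fibres, $\pi$ is an honest principal $S^1$-bundle, i.e.\ the structure is regular. For (4) I would argue that $\#_k(S^2\times S^3)$ is spin and that spin-ness is a homeomorphism invariant (detected by $w_2$), so $M$ is spin; alternatively $TM\cong\pi^*TX\oplus\RR$ gives $w_2(M)=\pi^*w_2(X)$, and $w_2(X)=c_1^{\orb}(X)\bmod 2=0$. Finally, (5) is immediate from the Seifert exact sequence $\pi_1(S^1)\to\pi_1(M)\to\pi_1^{\orb}(X)\to 1$: since $M$ is simply connected, the surjection onto $\pi_1^{\orb}(X)$ forces $\pi_1^{\orb}(X)=1$.
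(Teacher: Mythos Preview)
Your proposal is correct and follows essentially the same approach as the paper, which simply defers to \cite[Section 10.3.2]{BG} together with Proposition \ref{prop:minres} and the surrounding discussion. You have accurately reconstructed the content behind that citation: the transverse Calabi--Yau argument (null $\Rightarrow$ transverse Ricci-flat $\Rightarrow$ no transverse holomorphic vector fields when $b_1=0$ $\Rightarrow$ Reeb torus is one-dimensional) is precisely how Boyer--Galicki establish quasi-regularity, and your derivations of (2)--(5) from Koll\'ar's structure results and the Seifert exact sequence match the intended logic.
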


\begin{proof} 
It follows from Proposition \ref{prop:minres} and the discussion around it
 in a straightforward manner (see \cite[Section 10.3.2]{BG}).
\end{proof}

\begin{theorem}[{\cite[Corollary 10.3.11]{BG} and \cite[Theorem A]{CV}}] 
Any $M=\#_k(S^2\times S^3)$ with $2\leq b_2(M)\leq 21$ admits a null Sasakian structure, except, possibly, $b_2(M)=2$. 
\end{theorem}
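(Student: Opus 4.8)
The plan is to realize each admissible value of $b_2$ by a Seifert bundle over an explicit orbifold K3 surface and then to read off the diffeomorphism type from Smale's classification. By Proposition \ref{prop:minres} together with \cite[Theorem 10.3.8]{BG}, a null Sasakian structure on a Smale--Barden manifold amounts to a Seifert bundle $\pi\colon M\to X$ over a \emph{cyclic orbifold K3 surface}, that is, a cyclic Calabi--Yau orbifold whose minimal resolution $\tilde X$ is a smooth K3. Since numerical triviality of $K_X+\Delta$ forces $\Delta=0$, the orbifold carries no isotropy surfaces and only isolated cyclic quotient singularities; being Du Val and cyclic, these are of types $A_{n_1},\dots,A_{n_s}$, where $A_{n_j}$ resolves to a chain of $n_j$ $(-2)$-curves. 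As $b_2(\tilde X)=22$, we get $b_2(X)=22-\sum_j n_j$, and Theorem \ref{thm:H1=0} (empty branch divisor) gives $H_2(M,\ZZ)=\ZZ^k$ torsion-free with $k=b_2(X)-1=21-\sum_j n_j$. Hence producing $\#_k(S^2\times S^3)$ for $2\le k\le 21$ is the same as realizing every $N:=\sum_j n_j=21-k\in\{0,1,\dots,19\}$ by an $A$-type configuration on a K3 surface while keeping $\pi_1^{\orb}(X)=1$.

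For the construction I would use the weighted anticanonical K3 surfaces $X_d\subset\PP(w_0,w_1,w_2,w_3)$, $d=\sum_i w_i$ (Reid's $95$ families). Quasi-smoothness and well-formedness ensure that $X_d$ is a cyclic Calabi--Yau orbifold with only $A_n$ singularities, that $\pi_1^{\orb}(X_d)=1$, and that it carries the orbifold Fubini--Study Kähler class; its link is exactly the total space of the Seifert bundle with $c_1(M)=[\omega]$, whose topology is computed in \cite{BG}. One verifies that these families realize every value in $\{3,\dots,21\}$ with the single gap $b_2=17$ (i.e.\ $N=4$), and this remaining value is supplied by the weighted complete-intersection construction of \cite{CV}. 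As an alternative more in the spirit of this paper, one may instead build $X$ by contracting configurations of $(-2)$-curves sitting inside the reducible fibers of an elliptic K3 surface $\tilde X\to\PP^1$: a Kodaira fiber of type $I_m$ contributes an $A_{m-1}$ block, the identity $\sum_v e(F_v)=24$ and the Picard bound keep $N\le 19$, and a section kills the local cyclic loops exactly as in Lemma \ref{lem:Hirz-Jung} and Proposition \ref{prop:pi1orb}, so that $\pi_1^{\orb}(X)=1$.

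Given such an $X$, I would conclude as follows. By the converse part of Theorem \ref{thm:seifert.}, the Seifert bundle determined by the Kähler class $[\omega]$ admits a quasi-regular Sasakian structure, and it is null because $K_X=0$ yields $c_1(\mathcal{F}_\xi)=0$. Arranging, via Theorem \ref{thm:constr-tool} and the hypothesis $b_2(X)=k+1\ge 3$, that $c_1(M/m)$ is primitive, Theorem \ref{thm:H1=0} gives $H_1(M,\ZZ)=0$ and $H_2(M,\ZZ)=\ZZ^k$; together with $\pi_1^{\orb}(X)=1$ and the exact sequence $\ZZ\to\pi_1(M)\to\pi_1^{\orb}(X)\to 1$ this forces $\pi_1(M)=1$, so $M$ is a Smale--Barden manifold. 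Since null Sasakian manifolds are spin, Smale's classification of simply connected spin $5$-manifolds with torsion-free $H_2$ identifies $M\cong\#_k(S^2\times S^3)$.

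The hardest point, and the reason the statement excludes $b_2=2$, lies at the extremal end of the range. The value $b_2=2$ corresponds to $N=19$, which forces $\tilde X$ to be a K3 of \emph{maximal} Picard number $20$ carrying a rank-$19$ configuration of purely $A$-type $(-2)$-curves. Gluing such a block into the unimodular K3 lattice at maximal rank while keeping every singularity cyclic and $\pi_1^{\orb}(X)=1$ is precisely what neither the weighted-hypersurface families nor the complete-intersection families accomplish, so this case falls outside the present approach and is left open here; it is exactly the case that the finer orbifold techniques behind Theorem \ref{thm:main-null} are designed to settle.
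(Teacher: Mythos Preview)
Your approach is essentially the same as the paper's: the paper's proof is simply a literature reference, observing that Reid's list of weighted K3 hypersurfaces in \cite{R} realizes every $k\in\{3,\dots,21\}$ except $k=17$, and that the missing value is supplied by the weighted complete intersections of \cite{IF} via \cite{CV}. You reproduce exactly this two-source argument, but you additionally spell out the underlying mechanism (why $\Delta=0$, why the singularities are cyclic $A_n$, how $b_2(X)$ and hence $k$ are read off from $\sum n_j$, and how Smale's theorem closes the loop), which the paper leaves implicit in its citations.

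One small point of care: your appeal to Theorem~\ref{thm:constr-tool} for primitivity of $c_1(M/m)$ is a bit off-target, since here $\Delta=0$ and that theorem is formulated for a non-empty collection of isotropy surfaces; what is actually being used in \cite{BG} is that the link of the weighted hypersurface is already the Seifert bundle with the Fubini--Study class, and its topology is computed directly. Your alternative sketch via contracting $A$-configurations on an elliptic K3 with a section is not how the paper proves \emph{this} theorem, but it is precisely the method the paper deploys later (Theorem~\ref{thm:shimada} and the proof of Theorem~\ref{thm:main-null}) to handle the excluded case $b_2=2$, so it is a reasonable anticipation of what comes next.
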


¡\begin{proof} 
The proof for all cases except $k=17$  is based on the list of examples of orbifold K3 surfaces in \cite{R}. These are hypersurfaces in weighted projective spaces. In the case $k=17$ the author of \cite{CV} goes along the similar lines using the list of weighted complete intersections in \cite{IF}. The second Betti numbers are calculated in \cite[Example 10.3.10]{BG}  and in \cite{CV}, and it appears that all $k$ can be realized except   $k=2$.
\end{proof}

\subsection{Preparatory work to prove   Theorem \ref{thm:main-null}}

In order to construct a null Sasakian structure on a Smale-Barden manifold $M$ with $b_2(M)=2$,
we need a cyclic K3 orbifold $X$ with $b_2(X)=3$ and with the property $\pi_1^{\orb}(X)=1$. This follows from Theorem \ref{thm:H1=0}.
Thus, we begin with a construction of such an orbifold. 
We will use the method of lattices going back to Pyatetskii-Shapiro  and Shafarevich \cite{PSS} and developed in \cite{N} combined with methods of calculation of the fundamental groups of  smooth parts of K3 surfaces with cyclic singularities (that is, $\pi_1^{\orb}(X)$) from \cite{K-Z,S-Z}.  
We understand a  lattice as a free $\mathbb{Z}$-module endowed with a non-degenerate symmetric bilinear form with values in $\mathbb{Z}$. If $R$ is a lattice and $x,y\in R$, 
we write $x\cdot y$ for the value of this form on $(x,y)$. Assume that $\alpha\in R$ satisfies $\alpha^2=-2$. Then $\alpha$ determines an automorphism  $s_{\alpha}: R\rightarrow R$ by the formula
 $$
 s_{\alpha}(x)=x+(x\cdot\alpha)\alpha\, .
 $$

Assume that the bilinear form $(x,y)\rightarrow x\cdot y$ on $R$ is negative definite, and that $R$ is generated by elements of square $(-2)$.  Then the group generated by all $s_{\alpha}$ is the Weyl group, and free generators of square $(-2)$  constitute a root system, 
and one can associate with $R$ a Dynkin diagram in a standard way (see \cite{N} for a more detailed account). In particular, it is known that such lattices  correspond to the Dynkin diagrams which are disjoint unions of connected Dynkin diagrams of types $A_k,k\geq 1$, 
$D_l,l\geq 4$, $E_6,E_7,E_8$. Thus, we will call $R$ a root lattice.   

 Let $X$ be a smooth K3 surface. Then $H^2(X,\mathbb{Z})$ is an even unimodular lattice 
 with the intersection form of signature $(3,19)$ as the bilinear form in the definition. 
Consider complex elliptic surfaces $f: X\rightarrow\mathbb{C}P^1$ with a section $O$. The N\'eron-Severi lattice $NS(X)$ of $X$ is defined as 
$H^{1,1}(X)\cap H^2(X,\mathbb{Z})$. 
For elliptic surfaces over $\mathbb{C}P^1$, $NS(X)$ coincides with the Picard lattice $\operatorname{Pic}(X)$. 
The cohomology classes of $O$ and a generic fiber of $f$ generate a sublattice $U_f$ of rank $2$. Since $U_f$ turns to be a hyperbolic lattice, there is a decomposition $NS(X)= U_f \oplus W_f$
for some negative-definite even lattice $W_f$. 
 The Mordell-Weil group $MW_f$ of $X$ is understood as $NS(X)/T$, where $T$ is generated by $O$ and the fiber components \cite{Schutt-Shioda}.   
 
 In the sequel, we require that $X$ is extremal, that is, the Picard number $\rho(X)=20$ and the Mordell-Weil group $MW_f$ is finite. Such surfaces are classified in \cite{S-Z}. We will need a more detailed account of this classification. 
Let $R_f$ be the (finite) set of points $v\in\mathbb{C}P^1$ such that $f^{-1}(v)$ is reducible. For a point $v\in R_f$ the notation $f^{-1}(v)^{\#}$ means the union of reducible components of $f^{-1}(v)$ that are disjoint from $O$. 
It has been known since Kodaira's work that the cohomology classes of irreducible components of $f^{-1}(v)^{\#}$ form a negative definite root lattice $S_{f,v}$. The sum of disjoint components of the Dynkin diagrams corresponding to $S_{f,v}$ will be called the type $\tau(S_{f,v})$ of the lattice. Define the formal sum of the types of such lattices:
 $$
 \Sigma_f=\sum_{v\in R_f}\tau(S_{f,v}),
 $$
and denote by $S_f$ the direct sum of $S_{f,v}$.

Define 
 $$
  \operatorname{eu}(\Sigma_f)=\sum_{l\geq 1}a_l(l+1)+\sum_{m\geq 4}d_m(m+2)+\sum_{n=6}^8e_n(n+2),
  $$
where $a_l$, $b_m$ and $e_n$ denote the numbers of the connected components of the Dynkin diagrams of each type. Denote by $\Gamma_f$ the union of the zero section and all irreducible components  of $f^{-1}(v), v\in R_f$. For a point $v\in R_f$, we denote  the total fiber of $f$ over $v$ by
 $$
 \sum_{i=1}^{r_v}m_{v,i}C_{v,i}\, ,
 $$
where $m_{v,i}$ is the multiplicity of the irreducible component $C_{v,i}$ of $f^{-1}(v)$. It is known  
that $MW_f=W_f/S_f$ (cf.\ \cite[Prop.\ 6.42]{Schutt-Shioda} or \cite[Lem.\ 2.5]{S-Z}).

\begin{theorem}[{\cite[Claim 2, p.\ 36]{S-Z}}]\label{thm:shimada} 
Assume $MW_f=0$. Suppose that a configuration $\Gamma\subset\Gamma_f$ satisfies the following conditions:
\begin{enumerate}
\item[(Z1)] the number of $v\in R_f$ such that $C_{v,i}\subset\Gamma$ holds for any $C_{v,i}$ with $m_{v,i}=1$ is at most one,
\item[(Z2)] $\operatorname{eu}(\Sigma_f)\leq 23$.
\end{enumerate}
Then $\pi_1(X-\Gamma)=1$.
\end{theorem}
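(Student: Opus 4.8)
The plan is to run a Zariski--van Kampen argument along the elliptic pencil $f$, after first reducing the problem to killing meridians. Since $X$ is a K3 surface, $\pi_1(X)=1$, and the inclusion $X-\Gamma\inc X$ induces a surjection $\pi_1(X-\Gamma)\surj\pi_1(X)=1$ whose kernel is normally generated by the meridians $\lambda_C$ of the components $C\subset\Gamma$. Thus $\pi_1(X-\Gamma)$ is normally generated by these meridians, and it suffices to prove that each $\lambda_C$ is trivial. A key simplification is that every component of $\Gamma$ is rational: the section $O$ is a $\CP^1$ and every fiber component $C_{v,i}$ is a smooth rational $(-2)$-curve. Hence no genus-$1$ generators intervene, and the only nontrivial input to $\pi_1$ should come from the fiber generators of a smooth fiber and from the way the reducible fibers are perforated.

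First I would restrict $f$ to $X-\Gamma\to\CP^1$ and let $F^\circ$ be the generic fiber with its intersection with $\Gamma$ removed, so $F^\circ$ is a torus, or a once-punctured torus if $O\subset\Gamma$; in either case $\pi_1(F^\circ)$ is generated by the two standard loops $\alpha,\beta$. Choosing a base point and a standard system of paths to the finite set $\Sigma\subset\CP^1$ of values over which the fiber is singular or meets $\Gamma$ nongenerically, the Zariski--van Kampen theorem presents $\pi_1(X-\Gamma)$ on the generators $\alpha,\beta$ together with one meridian for each removed fiber component, modulo the local relations contributed at each $v\in\Sigma$. Because the base $\CP^1$ is simply connected there is no free $\pi_1$ of the base: the only global constraint is that the total monodromy $\prod_{v\in\Sigma}T_v$ equals the identity, and every base loop is expressible through the local data.

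The heart of the argument is the local analysis at each $v\in\Sigma$, and this is where the two hypotheses enter. Condition (Z2) is used first: since $e(X)=24$ is the sum of the Euler numbers of all singular fibers and the reducible ones contribute exactly $\operatorname{eu}(\Sigma_f)\le 23$, there is at least one irreducible singular fiber $F_0$. Its normalization is a $\CP^1$, so $F_0$ carries a contractible affine chart, and going around $F_0$ kills its vanishing cycle $\delta\in\la\alpha,\beta\ra$; moreover $F_0$ lies over a point outside $R_f$, so it meets $\Gamma$ at most in the single point $F_0\cap O$, which also trivializes $\lambda_O$. Each reducible fiber then contributes its monodromy $T_v$ (a product of Dehn twists dictated by its Kodaira type) acting on $\la\alpha,\beta\ra$, together with meridian relations expressing that a perforated cycle or tree of $\CP^1$'s has been removed; condition (Z1), which forbids more than one fiber from having \emph{all} its multiplicity-$1$ components deleted, guarantees that in every reducible fiber (with at most one exception) a reduced ``anchor'' component survives through which each local meridian is expressed in terms of $\alpha,\beta$ and the already-trivial $\lambda_O$. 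Assembling these, the relation killing $\delta$ from $F_0$, together with the monodromy relations of the reducible fibers (whose product is trivial and which, for an extremal fibration with $MW_f=0$, leave no common invariant vector once one cycle is killed), should force $\alpha=\beta=1$, after which every surviving meridian is trivial as well.

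The step I expect to be the main obstacle is precisely this combination: verifying, fiber type by fiber type across the classification of extremal $MW_f=0$ elliptic K3 surfaces in \cite{S-Z}, that the monodromy relations of the reducible fibers together with the single vanishing-cycle relation supplied by (Z2) generate enough to trivialize $\la\alpha,\beta\ra$, and that (Z1) is exactly the hypothesis needed so that deleting fiber components never disconnects a meridian from the fiber generators. A useful cross-check at the abelianized level would be to compute $H_1(X-\Gamma)$ as the cokernel of the map $H_2(X)\to\bigoplus_{C\subset\Gamma}\ZZ$, $c\mapsto (c\cdot C)_C$, and to verify its vanishing using that $MW_f=0$ forces the fiber components together with $O$ and a fiber $F$ to span $NS(X)$; matching this homological computation against the group-level relations would confirm that no hidden torsion survives.
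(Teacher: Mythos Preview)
The paper does not prove this theorem at all; it is quoted as a black-box citation from Shimada--Zhang \cite[Claim~2, p.~36]{S-Z} and then immediately applied in the proof of Theorem~\ref{thm:main-null}. There is thus no argument in the present paper to compare your sketch against.

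On your sketch itself: the overall architecture---meridians normally generate $\pi_1(X-\Gamma)$, Zariski--van Kampen along $f$, (Z2) supplies an extra singular fiber, (Z1) supplies anchor components---is in the right spirit for how such statements are proved. Two points deserve tightening. First, the assertion that ``the reducible ones contribute exactly $\operatorname{eu}(\Sigma_f)$'' to $e(X)=24$ is not literally true: Kodaira types $III$ and $IV$ carry root lattices $A_1,A_2$ (so $\operatorname{eu}$-contributions $2,3$) but Euler numbers $3,4$. The inequality $\operatorname{eu}(\Sigma_f)\le 23$ therefore does not on its own force an \emph{irreducible} singular fiber; you need to say a word about why the slack still yields what you want. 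Second, and more seriously, the step you yourself flag as the main obstacle---showing that the single vanishing-cycle relation plus the monodromies of the reducible fibers actually kill $\alpha,\beta$---is not argued at all; it is deferred to a type-by-type verification that you do not carry out. Since that verification is the entire content of the theorem, the sketch as written is a plan rather than a proof. Your proposed abelian cross-check via the cokernel of $H_2(X)\to\bigoplus_{C\subset\Gamma}\ZZ$ is sound and would at least confirm $H_1(X-\Gamma)=0$, but it does not by itself rule out a perfect fundamental group.
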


\subsection{Proof of Theorem \ref{thm:main-null}}

We begin with a construction of a K3 orbifold with $b_2(X)=3$ and $\pi_1^{\orb}(X)=1$.
In \cite{Shioda}, there is a Weierstrass form given for an elliptic K3 surface $X'$,
with a fibre of Kodaira type I$_{19}$, i.e.\ a cycle of 19 $(-2)$-curves.
Exactly one fibre component, say $\Theta_0$ connects with the zero section of the fibration, 
which provides another $(-2)$-curve $O$.
Omitting one of the two fibre components adjacent to $\Theta_0$,
we derive an $A_{19}$ configuration $\Gamma$ of $(-2)$-curves given by $O, \Theta_0, \Theta_1,\hdots,\Theta_{17}$.
This configuration is a part of the basis of $\operatorname{Pic}(X')$, 
since one can just add the remaining fiber component. Since $\operatorname{eu}(A_{19})=20$ 
and $MW_f=W_f/S_f=0$, we see that $X'$ satisfies the conditions $(Z1)$ and $(Z2)$ of Theorem \ref{thm:shimada}. Thus, $\pi_1(X'-\Gamma)=1$.

Contracting the 19 curves $O, \Theta_0$, $\Theta_1,...,\Theta_{17}$  successively, we obtain an  orbifold K3 surface $X$ with singularity $P$  of type 
$A_{19}$, $b_2(X)=3$ and 
 $$
 \pi_1(X'-\Gamma)=\pi_1(X-P)=\pi_1^{\orb}(X)=1.
 $$
 
Now we take a Seifert bundle $M\to X$ with primitive Chern class $c_1(M)\in H^2(X-P,\mathbb{Z})$.
This can be arranged by picking first a primitive class in $H^2(X-P,\mathbb{Z})$, 
then constructing the line bundle $M\rightarrow X-P$, and finally extending it as a Seifert bundle 
$\pi: M\rightarrow X$ across $P$. Moreover, we can assure that $c_1(M)$ is a Kähler class, so that
Theorem \ref{thm:seifert.} applies to produce a Sasakian structure on $M$.
As clearly $H_1(X,\ZZ)=0$, we have constructed a Seifert bundle satisfying the assumptions of 
Theorem \ref{thm:H1=0}. From these we get  $H_1(M,\ZZ)=0$,
and Theorem \ref{thm:main-null} follows using the fact that $\pi_1^{\orb}(X)=1$ implies $M$ to be simply connected,
so $M=\#_2(S^2\times S^3)$.
\qed

\begin{remark} 
In \cite{S-Z} the authors use the notation $\Delta$ instead of our $\Gamma$ (in this article $\Delta$ is a branch divisor).
\end{remark}

\begin{remark}
One can also work with other K3 surfaces $X'$,
such as the one from \cite[Remark 4.10]{S-Z}.
\end{remark}

\subsection{Remarks on the classification of null Sasakian structures}
The classification of null Sasakian structures amounts to a classification of orbifold K3 surfaces with  cyclic singularities satisfying the conditions $\pi_1^{\orb}(X)=1$. In general this may be out of reach, however, in case $\#_2(S^2\times S^3)$, one needs to classify cyclic orbifolds $X$ with $b_2(X)=3$ and trivial orbifold fundamental group. For example, it is concievable 
that one can classify extremal elliptic K3 fibrations with configurations of $(-2)$ curves of type $A$, using \cite{K-Z, S-Z,S-nodal}. 
For K3 surfaces of degree 2 and 6, this has been achieved in
\cite{Yang}, resp.\ \cite{Stegmann} (without considering fundamental groups).

\end{document}